\documentclass[leqno,10pt]{amsart} %
\usepackage{amssymb}
\usepackage{amstext}
\usepackage{graphicx}
\usepackage{tikz}

\usepackage[left=1.0in, right=1.0in, top=1.25in, bottom=1.25in]{geometry}
\usepackage{graphicx}
\usepackage{amsthm}
\usepackage{hyperref}

\newcommand{\grb}[1]{\raisebox{-.8cm}{\includegraphics[height=2cm]{UP#1.pdf}}}
\newcommand{\grc}[1]{\raisebox{-1.3cm}{\includegraphics[height=3cm]{UP#1.pdf}}}
\newcommand{\grd}[1]{\raisebox{-1.8cm}{\includegraphics[height=4cm]{UP#1.pdf}}}
\newcommand{\gre}[1]{\raisebox{-2.3cm}{\includegraphics[height=5cm]{UP#1.pdf}}}

\theoremstyle{plain} 
\newtheorem{theo}{\indent\sc Theorem}[section]
\newtheorem{lemm}[theo]{\indent\sc Lemma}
\newtheorem{cor}[theo]{\indent\sc Corollary}
\newtheorem{prop}[theo]{\indent\sc Proposition}

\theoremstyle{definition} 
\newtheorem{defi}[theo]{\indent\sc Definition}

\newcommand{\R}{\mathbb R}
\newcommand{\C}{\mathbb C}
\newcommand{\N}{\mathbb N}

\newcommand{\W}{\Lambda}

\newcommand{\Pl}{\mathcal P}

\newcommand{\ca}{\mathcal{C}}
\newcommand{\AW}{\mathcal{A}\Lambda}
\newcommand{\AC}{\mathcal{A}}
\newcommand{\Irr}{\text{Irr}(\ca)}
\newcommand{\Fus}{\mathbb{C}[\Irr]}
\newcommand{\tAW}{\widetilde{\AW}}
\newcommand{\AP}{\mathcal{AP}}
\newcommand{\A}{\mathcal{A}}

\begin{document}

\title[Annular Representation Theory for Rigid C*-Tensor Categories]{Annular Representation Theory for Rigid C*-Tensor Categories}
\author[Shamindra Kumar Ghosh]{\rm Shamindra Kumar Ghosh}
\author[Corey Jones]{\rm Corey Jones }
\thanks{The second author was supported in part by NSF Grant DMS-1362138.}

\keywords{ 
$C^{*}$-tensor category, Approximation and Rigidity Properties, Planar Algebras, Drinfeld Center}

\address[Shamindra Kumar Ghosh]{
India Statistical Institute\endgraf
Stat-Math Unit (Kolmogorov Bhavan)\endgraf
203 B. T. Road\endgraf
Kolkata 700108\endgraf
India}

\email{shamindra.isi@gmail.com}

\address[Corey Jones]{
Vanderbilt University\endgraf
Department of Mathematics\endgraf
1326 Stevenson Center\endgraf
Nashville\endgraf
TN 37240\endgraf
USA}
\email{corey.m.jones@vanderbilt.edu}

\maketitle

\begin{abstract} 

We define annular algebras for rigid $C^{*}$-tensor categories, providing a unified framework for both Ocneanu's tube algebra and Jones' affine annular category of a planar algebra.  We study the representation theory of annular algebras, and show that all sufficiently large (full) annular algebras for a category are isomorphic after tensoring with the algebra of matrix units with countable index set, hence have equivalent representation theories.  Annular algebras admit a universal $C^{*}$-algebra closure analogous to the universal $C^{*}$-algebra for groups.  These algebras have interesting corner algebras indexed by some set of isomorphism classes of objects, which we call centralizer algebras.  The centralizer algebra corresponding to the identity object is canonically isomorphic to the fusion algebra of the category, and we show that the admissible representations of the fusion algebra of Popa and Vaes are precisely the restrictions of arbitrary (non-degenerate) $*$-representations of full annular algebras.  This allows approximation and rigidity properties defined for categories by Popa and Vaes to be interpreted in the context of annular representation theory.  This perspective also allows us to define ``higher weight'' approximation properties based on other centralizer algebras of an annular algebra.  Using the analysis of annular representations due to Jones and Reznikoff, we identify all centralizer algebras for the $TLJ(\delta)$ categories for $\delta\ge 2$.   

\end{abstract}

\begin{section}{Introduction}

Rigid $C^{*}$-tensor categories provide a unifying language for a variety of phenomena encoding ``quantum symmetries''.  For example, they appear as the representation categories of Woronowicz' compact quantum groups, and as ``gauge symmetries'' in the algebraic quantum field theory of Haag and Kastler.  Perhaps most prominently, they arise as categories of finite index bimodules over operator algebras, taking center stage in Jones' theory of subfactors.  The construction and classification of these categories is a very active area of research.  Much of the work in this area has been focused on \textit{unitary fusion categories}, which are rigid $C^{*}$-tensor categories with finitely many isomorphism classes of simple objects.  Categories with infinitely many isomorphism classes of simple objects are called \textit{infinite depth}, and the primary examples come from either discrete groups, representation categories of compact quantum groups, or general categorical constructions, such as the free product, with finite depth examples.  

Infinite depth categories may exhibit interesting analytical behavior analogous to infinite discrete groups.  Approximation and rigidity properties such as amenability, the Haagerup property, and property (T) can be defined for discrete groups in terms of the behavior of sequences of positive definite functions converging to the trivial representation, or equivalently through the properties of the Fell topology on the space of irreducible unitary representations near the trivial representation. In particular, approximation properties guarantee the existence of ``small'' representations converging to the trivial representation, while property (T) asserts that the trivial representation is isolated in the Fell topology.

Following the analogy with groups in the subfactor context, Popa introduced concepts of analytical properties such as amenability, the Haagerup property, and property (T) for finite index inclusions of $II_1$ factors \cite{Po0}, \cite{Po1}, \cite{Po3}, \cite{Po4}.  For a finite index subfactor $N\subseteq M$, Popa introduced the symmetric enveloping inclusion $T\subseteq S$ (see \cite{Po4}). One can view $S$ as a sort of crossed product of $T$ by the category of $M-M$ bimodules appearing in the standard invariant of $N\subseteq M$.  Then one can use sequences of UCP maps $\psi_{n}: S\rightarrow S$ which are $T$-bimodular in place of positive definite functions to define approximation and rigidity properties, with the identity map replacing the trivial representation.  Alternatively, one can use $S-S$ bimodules generated by $T$ central vectors in place of unitary representations.  While these definitions apriori depend on the subfactor $N\subseteq M$, Popa showed that in fact these definitions depend only on the standard invariant of the subfactor.  If the subfactor comes from a group either through the group diagonal construction or the Bisch-Haagerup construction, Popa (\cite{Po3}, \cite{Po4}) and Bisch-Popa (\cite{BiPo}), Bisch-Haagerup (\cite{BH}) respectively, showed that the subfactor has an analytical property if and only if the group does, ensuring that these are in fact the right definitions for these properties in the subfactor setting.

Recently in a remarkable paper, Popa and Vaes show how to extend these definitions to arbitrary rigid $C^{*}$-tensor categories without reference to an ambient subfactor \cite{PV}.  The \textit{fusion algebra} of a category is the complex linear span of isomorphism classes of simple objects, with multiplication given by the fusion rules.  Popa and Vaes define a class of \textit{admissible} representations of the fusion algebra, which take the place of unitary representations of groups.  Approximation and rigidity properties have natural definitions in this setting, and they show that in the case $\ca$ is the category of $M-M$ bimodules for a finite index subfactor $N\subseteq M$, the category has the property if and only if the subfactor does.

One of the goals of this paper is to understand the admissible representation theory of Popa and Vaes as the ordinary representation theory of another algebraic object, namely Ocneanu's tube algebra.  The \textit{tube algebra} $\AC$ is an associative $*$-algebra associated to a rigid $C^{*}$-tensor category $\ca$, introduced by Ocneanu \cite{O}.  In the fusion case this is a finite dimensional semi-simple algebra.  This algebra's significance stems from the fact that irreducible representations of this algebra are in 1-1 correspondence with simple objects in the Drinfeld center $Z(\ca)$.  $Z(\ca)$ is always a modular tensor category, making it of great interest for applications in topological quantum field theory.  Computing the tube algebra provides an algorithmic (though sometimes quite complicated) approach to finding the combinatorial data for $Z(\ca)$ from the combinatorial data of $\ca$.

One approach to studying tensor categories is the \textit{planar algebra} formalism, introduced by Jones in \cite{Jo2}.  A planar algebra packages all the data of a rigid $C^{*}$-tensor category into a single algebraic object, described by planar pictures drawn in disks, along with an action of the operad of planar tangles.  This approach has been very useful, both technically and conceptually, leading to significant progress in both the classification and construction of new examples \cite{JoMorSny}.  Jones introduced the \textit{annular category} of a planar algebra in \cite{Jo4}, with the intention of providing obstructions to the existence of planar algebras with certain principal graphs.  This has been quite successful and is a fundamental technique in the classification of subfactor planar algebras of small index.  A slightly bigger object, the \textit{affine annular category} of the planar algebra was introduced and studied in \cite{Jo3}.  The affine annular category of a planar algebra is obtained by drawing pictures in the interior of annuli rather than disks and applying only local relations.   It was shown in \cite{DGG} that the tensor category of finite dimensional Hilbert space representations of the affine annular category is braided monoidal equivalent to the Drinfeld center of the projection category of the planar algebra.  A similar result in the TQFT setting was shown by Walker \cite{W1}.

It is therefore not surprising that the affine annular category of a planar algebra and the tube algebra of the underlying category have equivalent representation theories, since the category of finite dimensional representations of both algebras are equivalent to the Drinfeld center.  In this paper, we introduce \textit{annular algebras} $\AW$, with weight set $\W\subseteq [Obj(\ca)]$.  Choosing $\W:=\Irr$ yields the tube algebra of Ocneanu, denoted $\AC$, while choosing $\W$ based on a planar algebra description yields the affine annular category $\AP$ of  Jones.  We show that all sufficiently large (full) annular algebras are isomorphic after tensoring with the $*$-algebra of matrix units with countable index set, hence have equivalent representation theories, unifying the two perspectives and providing a means of translating results from planar algebras to the tube algebra in a direct way.  

With a unified perspective in hand, we investigate annular algebras of a $C^{*}$-tensor category.  For each object $k\in \W$, there is a corner of the annular algebra denoted $\AW_{k,k}$ which is a unital $*$-algebra.  If we denote the identity object by $0$, then $\AW_{0,0}$ is canonically $*$-isomorphic to the fusion algebra of $\ca$.  We show that admissible representations of the fusion algebra in the sense of Popa and Vaes are precisely representations of the fusion algebra which are restrictions of $*$-representations of the tube algebra (or any full annular algebra).  This allows us to put context to the admissible representations of \cite{PV} in a natural way.  Inspired by the work of Brown and Guentner \cite{BG}, we can also define analytical properties for arbitrary weights $k\in \W$ by studying the admissible representations of the algebra $\AW_{k,k}$. 

We remark that shortly before the original version of this paper appeared, Neshveyev and Yamashita showed that the admissible representations of Popa and Vaes arise from objects in $Z(\text{ind-}\ca)$ \cite{NY2}.  Shortly after our paper appeared, Stefaan Vaes pointed out that representations of the tube algebra are in bijective correspondence with objects in $Z(\text{ind-}\ca)$, completing the circle between the three different points of view.  A detailed discussion of this correspondence will appear in a paper currently in preparation by S. Popa, D. Shlyakhtenko, and S. Vaes.

The $C^{*}$-algebras that appear as corners of the tube algebra in the Temperley-Lieb-Jones categories $TLJ(\delta)$ for $\delta\ge 2$ are unital, abelian $C^{*}$-algebras hence isomorphic to the continuous functions on compact Hausdorff spaces.  The spaces appear to be rather interesting.  Let $\delta\ge 2$.  We define the following topological spaces:

For $k$ even, $k>0$, $X_{k}:=\grb{spacexk}$.  For $k$ odd, define $X_{k}:=\grb{spacexkodd}$

For $k$ even, $k>0$ define $Y_{k}:=\grb{ykeven}$.  For $k$ odd, define $Y_{k}:=\grb{ykodd}$

We define $X_{0}=Y_{0}:=[-\delta,\delta]$.

\medskip

\ \  We let $\A$ denote the tube algebra of the $TLJ(\delta)$ categories, and $\A_{k,k}$ be the corner corresponding to the $k^{th}$ Jones-Wenzl idempotent.  In this paper we prove the following:

\begin{theo} If $\delta>2$, then $C^{*}(\A_{k,k})\cong C(X_{k})$.  If $\delta=2$, $C^{*}(\A_{k,k})\cong C(Y_{k})$.
\end{theo}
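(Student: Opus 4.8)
The plan is to combine Gelfand duality with the Jones--Reznikoff classification of irreducible annular representations of the Temperley--Lieb--Jones categories. Since $\A_{k,k}$ is abelian, its universal $C^{*}$-algebra is $C(\Omega_{k})$ for $\Omega_{k}$ the character space, so the entire content of the theorem is to identify $\Omega_{k}$, as a compact Hausdorff space, with $X_{k}$ when $\delta>2$ and with $Y_{k}$ when $\delta=2$. Throughout I would keep in mind the baseline case $k=0$, where $\A_{0,0}$ is the fusion algebra of $TLJ(\delta)$ --- namely $\C[\N]$ with the Chebyshev fusion recursion --- whose completion is $C([-\delta,\delta])$, consistent with $X_{0}=Y_{0}=[-\delta,\delta]$.

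First I would make the dictionary between $\Omega_{k}$ and annular representation theory precise. Writing $p_{k}\in\AW$ for the ``trivial tube'' idempotent at weight $k$, the corner is $\A_{k,k}=p_{k}\AW p_{k}$, and by the corner/universal-$C^{*}$ compatibility set up in the earlier sections, $C^{*}(\A_{k,k})=p_{k}\,C^{*}(\AW)\,p_{k}$ is a full corner of the closed ideal $\overline{C^{*}(\AW)\,p_{k}\,C^{*}(\AW)}$. The Rieffel/Morita correspondence then yields a homeomorphism of $\Omega_{k}=\widehat{C^{*}(\A_{k,k})}$ onto the subset of $\widehat{C^{*}(\AW)}$ consisting of irreducible $*$-representations with nonzero weight-$k$ space (and, since $p_{k}\,C^{*}(\AW)\,p_{k}$ is abelian, these weight-$k$ spaces are forced to be one-dimensional). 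By the earlier identification of full annular algebras up to tensoring with matrix units, these irreducible representations coincide with the irreducibles of the affine annular category of the Temperley--Lieb planar algebra classified by Jones and Reznikoff; so $\Omega_{k}$ is homeomorphic to the set of such irreducibles whose weight-$k$ space is nonzero, with the Fell topology.

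Second, I would run through the Jones--Reznikoff list. Their irreducibles are organized by a lowest weight $2m$ with $0\le 2m\le k$ and $2m\equiv k\pmod 2$, together with a parameter --- essentially an eigenvalue of the rotation tangle --- ranging over a circle or a closed interval depending on $m$ and on positivity of the invariant sesquilinear form. For each representation I would compute the dimension of its weight-$k$ space, discard those for which it vanishes, and for the survivors express the action of $\A_{k,k}$ on the one-dimensional weight-$k$ space as explicit continuous functions of the parameters. This exhibits a continuous surjection from a finite disjoint union of circles and closed intervals onto $\Omega_{k}$; the positivity constraints on the form are exactly what cut the parameter ranges down to the arcs appearing in the definition of $X_{k}$, and the coincidences of characters at endpoints (distinct parameter values giving the same functional on $\A_{k,k}$) are precisely the gluings drawn in those figures.

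Finally I would upgrade this to a homeomorphism $X_{k}\cong\Omega_{k}$. Continuity is automatic from the parametrization by continuous functions, so with $X_{k}$ compact and $\Omega_{k}$ Hausdorff it suffices to prove bijectivity, and this is where the real work lies. One must check that the Jones--Reznikoff list remains \emph{exhaustive} after passing to the $C^{*}$-completion (no character of $C^{*}(\A_{k,k})$ is missed), that distinct points of $X_{k}$ never yield the same character (injectivity --- the delicate cases occur at the two ends of each arc and at the branch points of the figure, where one compares the limiting representations directly), and that \emph{every} point of $X_{k}$ is realized by a genuinely bounded Hilbert space representation, so that it lies in the spectrum of the $C^{*}$-algebra and not merely of the $*$-algebra. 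The case $\delta=2$ I would treat separately: there the positivity inequalities governing the parameter ranges degenerate to equalities at the boundary and some discrete-series representations merge into the continuous family, which is exactly what deforms the spectrum from $X_{k}$ to $Y_{k}$; concretely I would redo the weight-$k$-space computation and the endpoint analysis with the $\delta=2$ specializations of the Jones--Reznikoff formulas. I expect exhaustiveness-after-completion and the precise endpoint gluing to be the main obstacles.
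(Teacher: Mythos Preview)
Your proposal is correct and follows essentially the same route as the paper: Gelfand duality reduces the problem to identifying the character space of the abelian algebra $\A_{k,k}$, and characters are matched with Jones--Reznikoff irreducibles having nonzero weight-$k$ component, with the topology determined by limiting behavior at the ``missing'' parameter values. The paper's proof is exactly this, carried out concretely: an explicit basis $\{x^{k}_{m,n}\}$ for $\A_{k,k}$ is constructed, the recursion for the coefficients $B^{k}_{m,l}(\alpha)$ and the eigenvalue of $\rho_n$ on $g^{\alpha}_{m,n}$ are computed diagrammatically, and these formulas are then used both to separate characters (your injectivity step) and to identify the limits as $t\to\pm\delta$ or, when $\delta=2$, as $\omega\to\pm 1,\pm i$ (your endpoint gluing) --- so the substance of the argument lies in those computations, which your outline correctly flags as the main work but does not detail. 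One small correction: what removes the endpoints $\pm\delta$ (and, for $\delta=2$, the points $\pm 1$ or $\pm i$ on the lower-weight circles) is not a positivity constraint on the parameter range but rather the vanishing of the vector $g^{\alpha}_{m,k}$ in the semi-simple quotient, i.e.\ the weight-$k$ space of that irreducible being zero; the representations themselves exist for all parameter values.
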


\medskip

This yields a topological characterization of the centralizer algebras of the tube algebras and hopefully will yield a deeper topological understanding of the category $Rep(ATL)$.  This result highlights a key point that was uncovered by Jones-Reznikoff in \cite{Jo3}:  in terms of annular representation theory, the category $TLJ(2)$ is non-generic.  Here we see that the space arising is topologically distinct from the case $\delta>2$, and thus we see that the algebras $C^{*}(\A_{k,k})$ distinguish these two cases.  In general, it seems that the ``non-smooth" points arise precisely from the existence of an actual unitary half-braiding.  In the $\delta=2$ case, the many ``non-smooth" points are the result of the standard braidings on $TLJ(\delta)$ being unitary (whereas they are not for $\delta>2$).

 The structure of the paper is as follows: In Section $2$ we briefly review rigid $C^{*}$-tensor categories. In Section $3$, we define annular algebras over $\ca$, in particular the tube algebra.  In Section $4$ we describe the basic annular representation theory and the universal norm for the tube algebra.  In Section $5$ we present our analysis of some examples, in particular $G$-Vec for a discrete group $G$ and $TLJ(\delta)$.  Section $6$ discusses approximation and rigidity properties and the relationship to the work of Popa and Vaes, as well as our definitions of analytical properties for arbitrary weights.  

\subsection{Acknowledgements}
We would like to thank Dietmar Bisch, Arnaud Brothier, Vaughan Jones, Jesse Peterson and Ved Gupta for many useful discussions on these topics, and Ben Hayes for his suggestion to look at the work of Brown and Guentner for higher weight approximation and rigidity properties.  We are grateful to Stefaan Vaes for his helpful comments, and pointing out to us the equivalence between $Rep(T\ca)$ and $Z(\text{ind-}\ca)$. We thank Makoto Yamashita for his correspondence, and pointing out the paper \cite{Pu} to us, as well as identifying an incorrect statement in an earlier version of this paper.  The second author was supported in part by NSF Grant DMS-1362138.

\end{section}

\begin{section}{Preliminaries: Rigid $C^{*}$-Tensor Categories}

In this paper we will be concerned with semi-simple $C^{*}$-categories with strict tensor functor, simple unit and duals.  We also assume that $\ca$ has at most countably many isomorphism classes of simple objects.  This type of rigid $C^{*}$-tensor category is by far the most frequently studied.  We will briefly elaborate on the meaning of each of these adjectives.

A $C^{*}$-category is a $\C$-linear category $\ca$, with each morphism space $Mor(X,Y)$ a Banach space satisfying $\| fg\|\le \|f \| \| g\|$ together with a conjugate-linear, involutive, contravariant functor $*:\mathcal{C}\rightarrow \ca$ which fixes objects and satisfies the $C^{*}$-property, namely $||f^{*}f||=||ff^{*}||=||f||^{2}$ for all morphisms $f$.  This makes each endomorphism algebra $Mor(X, X)$ into a $C^{*}$-algebra, and we also require that for all $f\in Mor(X,Y)$, $f^{*}f$ is positive in $Mor(X,X)$ for all objects $X,Y$.  We say the category is \textit{semi-simple} if the category has direct sums, sub-objects, and each $Mor(X,Y)$ is finite dimensional.

A strict tensor functor is a bi-linear functor $\otimes: \ca \times \ca\rightarrow \ca$, which is associative and has a distinguished unit $id\in Obj(\ca)$ such that $X\otimes id=X=id \otimes X$.  In general, the strictness assumption is quite strong and most tensor categories arising naturally in mathematics do not satisfy this condition, but rather the more complicated pentagon and triangle axioms (see, for example, \cite{NT}, Chapter 2).  However, every tensor category is equivalent in the appropriate sense to a strict one, so it is convenient when studying categories up to equivalence to include this condition.

The category is \textit{rigid} if for each $X\in Obj(\ca)$, there exists $\overline{X}\in Obj(\ca)$ and morphisms $R\in Mor(id, \overline{X}\otimes X)$ and $\overline{R}\in Mor(id, X\otimes \overline{X})$ satisfying the so-called conjugate equations:

$$(1_{\overline{X}}\otimes \overline{R}^{*})(R\otimes 1_{\overline{X}})=1_{\overline{X}}\ \text{and}\ (1_{X}\otimes R^{*})(\overline{R}\otimes 1_{X})=1_{X}$$

We say two objects $X,Y$ are (unitarily) \textit{isomorphic} if there exists $f\in Mor(X,Y)$ such that $f^{*}f=1_{X}$ and $f f^{*}=1_{Y}$.  We call an object $X$ \textit{simple} if $Mor(X,X)\cong \mathbb{C}$.  We note that for any simple objects $X$ and $Y$, $Mor(X,Y)$ is either isomorphic to $\mathbb{C}$ or $0$. Two simple objects are isomorphic if and only if $Mor(X,Y)\cong \mathbb{C}$. Isomorphism defines an equivalence relation on the collection of all objects and we denote the equivalence class of an object by $[X]$, and the set of isomorphism classes of simple objects $\text{Irr}(\mathcal{C})$.

The semi-simplicity axiom implies that for any object $X$, $Mor(X,X)$ is a finite dimensional  $C^{*}$-algebra over $\mathbb{C}$, hence a multi-matrix algebra.  It is easy to see that each summand of the matrix algebra corresponds to an equivalence class of simple objects, and the dimension of the matrix algebra corresponding to a simple object $Y$ is the square of the multiplicity with which $Y$ occurs in $X$.  In general for a simple object $Y$ and any object $X$, we denote by $N^{Y}_{X}$ the natural number describing the multiplicity with which $[Y]$ appears in the simple object decomposition of $X$.  If $X$ is equivalent to a subobject of $Y$, we write $X\prec Y$.  We often write $X\otimes Y$ simply as $XY$ for objects $X$ and $Y$.

 For two simple objects $X$ and $Y$, we have that $[X\otimes Y]\cong \oplus_{Z} N^{Z}_{XY}[Z]$.  This means that the tensor product of $X$ and $Y$ decomposes as a direct sum of simple objects of which $N^{Z}_{XY}$ are equivalent to the simple object $Z$.  The $N^{Z}_{XY}$ specify the \textit{fusion rules} of the tensor category and are a critical piece of data.

The fusion algebra is the complex linear span of isomorphism classes of simple objects $\C[\text{Irr}(\ca)]$, with multiplication given by linear extension of the fusion rules.  This algebra has a $*$-involution defined by $[X]^{*}=[\overline{X}]$ and extended conjugate-linearly.  This algebra is a central object of study in approximation and rigidity theory for rigid $C^{*}$-tensor categories.  

For a more detailed discussion and analysis of the axioms of a rigid $C^{*}$-tensor category, see the paper of Longo and Roberts \cite{LR} and Chapter 2 of the book by Neshveyev and Tuset \cite{NT}.  For the discussion of $C^{*}$-tensor categories and their relationship with other notions of duality in tensor categories see the paper of Mueger \cite{Mu1}. 

In a rigid $C^{*}$-tensor category, we can define the statistical dimension of an object $d(X)=inf _{(R,\overline{R})} || R|| ||\overline{R}||$, where the infimum is taken over all solutions to the conjugate equations for an object $X$.  The function $d(\ .\ ):Obj(\ca)\rightarrow \mathbb{R}_{+}$ depends on objects only up to unitary isomorphism.  It is multiplicative and additive and satisfies $d(X)=d(\overline{X})$ for any dual of $X$.  We call solutions to the conjugate equations \textit{standard} if $||R||=||\overline{R}||=d(X)^{\frac{1}{2}}$, and such solutions are essentially unique.  For standard solutions of the conjugate equations, we have a well defined trace $Tr_{X}$ on endomorphism spaces $Mor(X,X)$ given by

 $$Tr_{X}(f)=R^{*}(1_{\overline{X}}\otimes f)R=\overline{R}^{*}(f\otimes 1_{\overline{X}})\overline{R}\in Mor(id,id)\cong \C$$  

This trace does not depend on the choice of dual for $X$ or on the choice of standard solutions.  We note that $Tr(1_{X})=d(X)$.  See \cite{LR} for details.

We will frequently use the well known \textit{graphical calculus} for tensor categories.  See, for example, Section 2.5 of \cite{Mu1} or \cite{Y12}.  We refer the reader to \cite{BHP} for the closely related planar algebra perspective.
\end{section}

\begin{section}{Annular Algebras}

The tube algebra $\AC$ of a rigid $C^{*}$-tensor category $\ca$ was introduced by Ocneanu in \cite{O} in the subfactor context.  This algebra has proved to be useful for computing the Drinfeld center $Z(\ca)$, since finite dimensional irreducible representations of $\AC$ are in one-to-one correspondence with simple objects of $Z(\ca)$ (see \cite{I}).  In general, arbitrary representations of $\AC$ are in one-to-one correspondence with objects in $Z(\text{ind-}\ca)$ studied by Neshveyev and Yamashita in \cite{NY2}, an observation due to Stefaan Vaes.

The (affine) annular category of a planar algebra was introduced by Jones in \cite{Jo3}, \cite{Jo4}, with the purpose of providing obstructions to the existence of subfactor planar algebras with certain principal graphs.  Since every planar algebra $\Pl$ with index $\delta$ contains the Temperley-Lieb-Jones planar algebra $TLJ(\delta)$, one can decompose $\Pl$ as a direct sum of irreducible representations of the annular $TLJ(\delta)$ category.  The irreducible representations of $TLJ(\delta)$ were completely determined by Jones \cite{Jo4} and Jones-Reznikoff \cite{Jo3}, yielding a useful tool for the classification program of subfactors.

Here we introduce a mild generalization of both the algebraic structures described above, which we call an annular algebra of the category.  It depends on a choice of objects in the category, and is flexible enough to include both Ocneanu's tube algebra and Jones' affine annular categories as special cases.  The tube algebra is in some sense a minimal example, while the affine annular category of a planar algebra is particularly suitable in the case when the category arises as the projection category of a planar algebra with a nice skein theoretic presentation.   Proposition 3.5 shows that any two ``sufficiently large'' annular algebras (a class which include both the above mentioned examples) have equivalent representation theories in a strong sense.  This result allows us to translate results of Jones-Reznikoff on the affine annular $TLJ(\delta)$ to the tube algebra setting in Section 5.

For a rigid $C^{*}$-tensor category $\ca$, choose a set of representatives $X_{k}\in k$ for each $k\in\Irr$.  Let $0\in \Irr$ denote the equivalence class of the tensor unit, and choose $X_{0}$ to be the strict tensor unit. 

Let $[Obj(\ca)]$ be the set of equivalence classes of objects in $\ca$.  Let $\W$ be a subset of $[Obj(\ca)]$. For each $i\in \W$, we choose a representative $Y_{i}\in i$.  Then we define the \textit{annular algebra with weight set $\W$}

$$\AW:=\bigoplus_{i,j\in \W,\ k\in \Irr} Mor(X_{k}\otimes Y_{i} , Y_{j}\otimes X_{k})$$

An element $x\in \AW$  is given by a sequence $x^{k}_{i,j}\in Mor(X_{k}\otimes Y_{i}, Y_{j}\otimes X_{k})$ with only finitely many terms non-zero.  For a simple object $\alpha$ and  and arbitrary object $\beta$, $Mor(\alpha, \beta)$ has a Hilbert space structure with inner product defined by $\eta^{*}\xi=\langle \xi, \eta\rangle  1_{\alpha}$.  Note that this inner product differs from the tracial inner product by a factor of $d(\alpha)$.

$\AW$ carries the structure of an associative $*$-algebra, with associative product $\cdot$ and $*$-involution $\#$ defined by

$$(x\cdot y)^{k}_{i,j}=  \sum_{s \in \Lambda , m, l\in \Irr}\ \sum_{V\in onb(X_k,\ X_m\otimes X_l)}(1_{j}\otimes V^{*}) ( x^{m}_{s,j}\otimes 1_{l})(1_{m}\otimes y^{l}_{i,s})(V\otimes 1_{i})$$

$$(x^{\#})^{k}_{i,j}=  (\overline{R}^{*}_{k}\otimes 1_{j}\otimes 1_{k}) (1_{k}\otimes (x^{\overline{k}}_{j,i})^{*}\otimes 1_{k})(1_{k}\otimes 1_{i}\otimes R_{k})$$

\medskip

\noindent where $R_{k}\in Mor(id, \overline{X}_{k}\otimes X_{k})$ and $\overline{R}_{k}\in   Mor(id, X_k\otimes \overline{X}_{k})$ are standard solutions to the conjugate equations for $X_{k}$.  In the first sum, $onb$ denotes an orthonormal basis with respect to our inner product, and we may have $onb(X_k,\ X_ m\otimes X_ l)=\varnothing$ if $X_{k}$ is not equivalent to a sub-object of $X_{m}\otimes X_{l}$.  We mention the above compact form for the definition was borrowed from Stefaan Vaes.  It is clear that the isomorphism class of this algebra does not depend on the choices of representatives $X_{k}$.  We often write the sequence of morphisms as a sum $\displaystyle x=\sum_{i,j\in \W,\ k\in \Irr} x^{k}_{i,j}$, where only finitely many terms are non-zero. 

We denote the subspaces $\AW^{k}_{i,j}:=Mor(X_{k}\otimes Y_{i}, Y_{j}\otimes X_{k})\subset \AW$, and $\AW_{i,j}=\bigoplus_{k\in \Irr} \AW^{k}_{i,j}$.  For each $m\in \W$, there is a projection $p_{m}\in \AW^{0}_{m,m}$ given by $p_{m}:=1_{m}\in Mor(id\otimes Y_{m}, Y_{m}\otimes id)\in \AW$.  In particular $(p_{m})^{k}_{i,j}=\delta_{k,0}\delta_{i,j}\delta_{j,m} 1_{m}$.  We see that $\AW_{i,j}=p_{j} \AW p_{i}$.   The corner algebras $\AW_{m,m}=p_{m}\AW p_{m}$ are unital $*$-algebras.  We call $\AW_{m,m}$ the \textit{weight m centralizer algebra}.  The motivation for the terminology comes from the case when $\ca$ is $G-Vec$ for a discrete group $G$. In this example $m\in \Irr$ corresponds to an element of the group $G$, and $\AW_{m,m}$ is isomorphic to the group algebra of the centralizer subgroup of the element $m$ (see Section $5.1$).

Suppose $\W$ contains the strict tensor identity, labelled as usual by $X_{0}$.  Recall the fusion algebra of $\ca$ is the complex linear span of isomorphism classes of simple objects $\C[\text{Irr}(\ca)]$.  Multiplication is the linear extension of fusion rules and $*$ is given on basis elements by the duality.  From the definition of multiplication in $\AW$, one easily sees the following:

\begin{prop}The fusion algebra $\C[\text{Irr}(\ca)]$ is $*$-isomorphic to $\AW_{0,0}$, via the map $[X_{k}]\rightarrow 1_{k}\in (X_{k}\otimes id, id\otimes X_{k})\in \AW^{k}_{0,0}$.
\end{prop}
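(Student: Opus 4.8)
The plan is to verify directly that the linear map $\phi:\C[\Irr(\ca)]\to\AW_{0,0}$ determined by $\phi([X_k])=1_k\in Mor(X_k\otimes X_0,X_0\otimes X_k)=\AW^{k}_{0,0}$ is a $*$-algebra isomorphism. Bijectivity is essentially free: since $X_0$ is the strict tensor unit, $Mor(X_k\otimes X_0,X_0\otimes X_k)=Mor(X_k,X_k)\cong\C$ because $X_k$ is simple, so each graded piece $\AW^{k}_{0,0}$ is one-dimensional and spanned by $1_k$; hence $\{1_k:k\in\Irr\}$ is a basis of $\AW_{0,0}=\bigoplus_{k\in\Irr}\AW^{k}_{0,0}$, and $\phi$ sends a basis to a basis. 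So the content is checking that $\phi$ respects the product and the involution.

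For the product, I would specialize the displayed multiplication formula for $\AW$ to $i=j=0$, $x=1_m$, $y=1_l$ (viewed as elements $x^{m}_{0,0}=1_m$, $y^{l}_{0,0}=1_l$, all other components zero). Because $Y_0=X_0$ is the strict unit, the tensor factors $1_j\otimes(-)$, $(-)\otimes 1_l$, $1_m\otimes(-)$, $(-)\otimes 1_i$ all act trivially, and the sum over $s\in\W$ collapses to $s=0$. What remains is
$$(1_m\cdot 1_l)^{k}_{0,0}=\sum_{V\in onb(X_k,X_m\otimes X_l)}V^{*}\,(1_m\otimes 1_l)\,V=\sum_{V\in onb(X_k,X_m\otimes X_l)}V^{*}V,$$
and since each $V$ is a unit vector in the Hilbert space $Mor(X_k,X_m\otimes X_l)$ with respect to the inner product $\eta^{*}\xi=\langle\xi,\eta\rangle 1_{X_k}$, we have $V^{*}V=1_{X_k}$ for each $V$, so the sum equals $\dim Mor(X_k,X_m\otimes X_l)\cdot 1_k=N^{k}_{ml}\,1_k$. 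Therefore $1_m\cdot 1_l=\sum_{k\in\Irr}N^{k}_{ml}\,1_k$, which is exactly $\phi$ of the fusion-rule product $[X_m][X_l]=\sum_k N^{k}_{ml}[X_k]$.

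For the involution, I would specialize the displayed formula for $x^{\#}$ to $i=j=0$, $x=1_k\in\AW^{k}_{0,0}$, so that $x^{\overline k}_{0,0}=1_{\overline k}$ and $(x^{\overline k}_{0,0})^{*}=1_{\overline k}$; again all the identity tensor factors on the $X_0=id$ side disappear, leaving
$$(1_k^{\#})=(\overline{R}_k^{*}\otimes 1_k)(1_k\otimes 1_{\overline k}\otimes 1_k)(1_k\otimes R_k)=(\overline{R}_k^{*}\otimes 1_{X_k})(1_{X_k}\otimes R_k),$$
which is a morphism in $Mor(X_{\overline k},X_{\overline k})$; one of the conjugate equations for $X_k$ (taking $\overline{X}_k=X_{\overline k}$) says precisely that this composite equals $1_{\overline k}$. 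Hence $1_k^{\#}=1_{\overline k}=\phi([\overline{X_k}])=\phi([X_k]^{*})$, as required. I do not expect a serious obstacle here; the only mild subtlety is bookkeeping the strictness simplifications in the two displayed formulas and keeping track of which $\overline{X}_k$ has been chosen as the representative $X_{\overline k}$, but since the isomorphism class of $\AW$ is independent of these choices (as already noted before the proposition) one may choose $\overline{X}_k=X_{\overline k}$ throughout and the conjugate equations apply verbatim. Assembling the three points — bijectivity, multiplicativity, and $*$-preservation — completes the proof.
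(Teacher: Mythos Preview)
Your proposal is correct and is precisely the direct verification the paper leaves implicit (the paper offers no proof beyond ``from the definition of multiplication in $\AW$, one easily sees''). One bookkeeping slip in your involution step: if $x=1_k$ then the only nonzero component of $x^{\#}$ sits in degree $\overline{k}$, not $k$, so the formula should read $(\overline{R}^{*}_{\overline{k}}\otimes 1_{\overline{k}})(1_{\overline{k}}\otimes R_{\overline{k}})\in Mor(X_{\overline{k}},X_{\overline{k}})$; this equals $1_{\overline{k}}$ because its adjoint $(1_{\overline{k}}\otimes R^{*}_{\overline{k}})(\overline{R}_{\overline{k}}\otimes 1_{\overline{k}})$ is $1_{\overline{k}}$ by the second conjugate equation for $X_{\overline{k}}$ --- exactly the kind of index-tracking you already flagged as the only subtlety.
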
  

\begin{defi} The \textit{annular category with weight set $\W$} is the category where $\Lambda$ is the space of objects, and the morphism space from $k$ to $m$ is given by $\displaystyle \AW_{k,m} :=  \bigoplus_{j \in Irr (\mathcal C)} \mathcal A \Lambda^j_{k,m}$.  Composition is given by the restriction of annular multiplication.  
\end{defi}

The annular category and annular algebra basically contain the same information, so one can go between these two perspectives at leisure.  We feel the algebra perspective is slightly more convenient for the purpose of representation theory, however, any analysis of the algebra seems to always reduce to studying the centralizer algebras first, so the two points of view are not actually distinct in practice.  We remark that this category is \textit{not} a tensor category in general.

We introduce a bit of graphical calculus for annular algebras, extending the well known graphical calculus for tensor categories.  For $x\in Mor(X_k\otimes Y_n,\ X_m \otimes X_{k})$, we draw the picture \ \ \     $\grb{tubehom}$.

Conversely, if we see such a picture with top, bottom and side strings labeled with a $x$, then $x$ will represent a morphism in the space obtained by pulling the left side string down to the bottom left and the right side string up to the top right. For example, the picture

$\grc{tubehomtwist}$\ \  represents the morphism $$x^{\#}=(\overline{R}^{*}_{\overline{k}}\otimes 1_{n}\otimes 1_{\overline{k}}) (1_{\overline{k}}\otimes (x)^{*}\otimes 1_{\overline{k}})(1_{\overline{k}}\otimes 1_{m}\otimes R_{\overline{k}})\in Mor(X_{\overline{k}}\otimes Y_m, Y_{n}\otimes X_{\overline{k}}),$$

 where $x\in Mor(X_{k}\otimes Y_{n}, Y_{m}\otimes X_{k})$ is described above.  As we shall see, this graphical calculus will be convenient for writing certain identities and equations that may take a large amount of space to write as compositions and tensor products of morphisms, but consist of a simple picture using this formalism.  We remark that diagrams having no side strings can be interpreted as  morphisms in the category, and our graphical calculus restricts to the standard graphical calculus for tensor categories.

Annular algebras have a positive definite trace, given by $\Omega(x):=\sum_{k\in \Irr} Tr_{k}(x^{0}_{k,k})$, where $Tr_{k}$ denotes the canonical (unnormalized) trace on $Mor(Y_{k}, Y_{k})$, and we canonically identify $Mor(id\otimes Y_{k}, Y_{k}\otimes id)\cong Mor(Y_{k}, Y_{k})$; positive definiteness of $\Omega$ can be deduced following the same line of arguments used in the proof of \cite[Proposition 3.7]{DGG}. If we let $tr_{k}(\ . \ ):= \frac{1}{d(X_{k})}Tr_{k}(\ . \ )$, we define $\omega$ by the same formula, replacing $Tr_{k}$ with $tr_{k}$.  It is easy to see that $\Omega$ is a tracial functional on $\AW$, while $\omega$ is not due to the normalization factor.  It will be convenient, however, to have both functionals at hand.




\begin{defi}\label{tubalg} The \textit{tube algebra} is the annular algebra with weight set $\Irr$.  We denote the tube algebra $\AC$.
\end{defi}

The tube algebra is the ``smallest'' annular algebra that contains all of the information of the annular representation theory of the category as described in the next section, and hence is the best for many purposes.  In fact, a sufficiently large arbitrary annular algebra is ``Morita equivalent'' to the tube algebra.  Our notion sufficiently large is given by the following definition:

\begin{defi} A weight set $\W \subseteq Obj(\ca)$ is \textit{full} if every simply object is equivalent to a sub-object of some $X_{k},\ k\in\W$.
\end{defi}
For a countable set $I$, let $F(I)$ denote the $*$-algebra spanned by the system of matrix units $\{ E_{i,j} \in B(l^2(I)) : i,j \in I \}$ with respect to the orthonormal basis $I$ in $l^2(I)$. Further, for sets $I,J$, we will denote the span of the system of matrix units $\{ E_{i,j} \in B(l^2(I) , l^2(J)) : i \in I, j \in J \}$ by $F(I,J)$.
\begin{prop} If $\W$ is full, then $F(I) \otimes \A\cong F(I) \otimes \AW$ as $*$-algebras.
\end{prop}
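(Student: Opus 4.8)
The plan is to exhibit a Morita-type equivalence between $\A$ (the tube algebra, with weight set $\Irr$) and $\AW$ (with arbitrary full weight set $\W$) by first embedding both into a common larger annular algebra. Concretely, let $\W' := \W \cup \Irr$; since $\Irr \subseteq \W'$, we have a natural unital inclusion of $\A$ as the corner $p\,\mathcal{A}\Lambda'\,p$, where $p := \sum_{k \in \Irr} p_k$ is the (formal, locally finite) sum of the weight projections for the simple objects, and similarly $\AW$ sits inside $\mathcal{A}\Lambda'$ as $q\,\mathcal{A}\Lambda'\,q$ with $q := \sum_{i \in \W} p_i$. It therefore suffices to prove the statement in the special case that $\W \supseteq \Irr$: if we know $F(I)\otimes \A \cong F(I) \otimes \mathcal{A}\Lambda'$ and $F(I)\otimes \AW \cong F(I)\otimes \mathcal{A}\Lambda'$, we are done. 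So from now on assume $\Irr \subseteq \W$.

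The key structural observation is the following: if $\W$ is full and contains $\Irr$, then for every object $Y_i$, $i \in \W$, there is a unitary isomorphism $Y_i \cong \bigoplus_{k} N^k_{Y_i}\, X_k$ in $\ca$, so the corner $\AW_{i,i}$ is $*$-isomorphic to a direct sum over pairs of simple summands of morphism spaces $Mor(X_\ell \otimes X_k, X_{k'} \otimes X_\ell)$, which are precisely the building blocks of $\A$. More precisely, I would show that $p_i \AW p$ (where $p = \sum_{k\in\Irr} p_k$) is, as a right $\A$-module, a finite direct sum of copies of the modules $p_k \A$, $k \in \Irr$ — one copy for each simple summand of $Y_i$ — with the isomorphism implemented by the isometries $v \in Mor(X_k, Y_i)$ coming from a chosen direct-sum decomposition of each $Y_i$. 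This means $p\AW p \cong \A$ (which recovers the $\W\supseteq\Irr$ case of Proposition 3.5's philosophy) and that every weight projection $p_i$ of $\AW$ is Murray–von Neumann equivalent in $\AW$ to a sum of weight projections $p_k$, $k\in\Irr$, hence that $p$ is a \emph{full} projection in $\AW$ (its two-sided ideal is everything, since $\W$ full means every $Y_j$ has some $X_k$, $k\in\Irr$, as a summand, so $p_j \AW p \neq 0$ for all $j$).

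From fullness of the projection $p \in \AW$ and the isomorphism $p\AW p \cong \A$, the statement $F(I)\otimes \A \cong F(I)\otimes \AW$ for a suitable countable $I$ is then a standard stabilization argument: a full idempotent in a ($\sigma$-unital, countably-generated) $*$-algebra becomes, after tensoring with the compacts/matrix units $F(I)$, equivalent to the identity, so $F(I) \otimes \AW \cong F(I)\otimes p\AW p = F(I)\otimes \A$. One must be mildly careful because these are purely algebraic $*$-algebras rather than $C^*$-algebras, so I would spell out the equivalence directly: choose, for each $i \in \W$, an explicit family of isometries $\{v^i_{k,a}\}$ in $Mor(X_k, Y_i)$ (indexed by the multiplicity $a = 1, \dots, N^k_{Y_i}$) realizing $1_i = \sum_{k,a} v^i_{k,a} (v^i_{k,a})^*$, use the countability hypothesis on $\Irr$ and on $\W$ to index everything by a single countable set $I$, and write down the mutually inverse $*$-homomorphisms $F(I)\otimes\AW \rightleftarrows F(I)\otimes\A$ in terms of these isometries and the matrix units $E_{i,j}$, checking they respect $\#$ (this uses that the $v^i_{k,a}$ are isometries, so passing them through the $\#$-formula with the standard solutions $R_k$ is exactly the naturality used in defining $\AW$ up to choice of representatives) and the annular product (this uses the defining multiplication formula, with the orthonormal bases $onb(X_k, X_m\otimes X_l)$ absorbing the extra summands).

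The main obstacle, and the part deserving genuine care, is the bookkeeping of the two $*$-algebra isomorphisms and the verification that they are inverse and $*$-preserving: one is translating a geometrically obvious "cut down $Y_i$ into its simple pieces and re-glue" operation into the annular product formula, and the $\#$-involution involves the standard solutions $R_k$, so one must confirm that the isometries $v^i_{k,a}$ intertwine the conjugation structures compatibly (equivalently, that the graphical calculus introduced just before the Proposition makes the identity a picture-level triviality — capping off a strand by an isometry and its adjoint). Everything else — fullness of $p$, the reduction to $\W \supseteq \Irr$, the stabilization by $F(I)$ — is routine once this translation is in place.
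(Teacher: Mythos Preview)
Your core mechanism --- decompose each weight object $Y_i$ into simples via a system of isometries $v^i_{k,a}\in Mor(X_k,Y_i)$, and use these to identify $\AW$ with a ``matrix amplification'' of $\A$ --- is exactly what the paper does. The paper writes it down directly as a vector-space isomorphism
\[
\AW^{j}_{m,n}\;\cong\;\bigoplus_{s,t\in\Irr} Mor(X_t,Y_n)\otimes \A^{j}_{s,t}\otimes\overline{Mor(X_s,Y_m)}
\;\cong\;\bigoplus_{s,t} M_{B_{t,n}\times B_{s,m}}(\C)\otimes \A^{j}_{s,t},
\]
checks this respects the product and $\#$, and then absorbs the rectangular matrix blocks into $F(I)$ by re-indexing: set $I_s=\bigsqcup_{m\in\W} I\times B_{s,m}$, observe $F(I)\otimes\AW\cong\bigoplus_{s,t}F(I_t,I_s)\otimes\A_{s,t}$, and use fullness only to guarantee each $I_s$ is nonempty (hence bijective with $I$). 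So your ``spell out the equivalence directly with the $v^i_{k,a}$ and matrix units'' is precisely the paper's argument, just phrased in Morita/stabilization language rather than as an explicit matrix decomposition.

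There is, however, a genuine gap in your reduction step. You claim it suffices to treat the case $\Irr\subseteq\W$ by passing to $\W'=\W\cup\Irr$, because then both $F(I)\otimes\A\cong F(I)\otimes\mathcal{A}\W'$ and $F(I)\otimes\AW\cong F(I)\otimes\mathcal{A}\W'$ would follow. But your ``special case'' is literally the statement $\Irr\subseteq\W\Rightarrow F(I)\otimes\A\cong F(I)\otimes\AW$; applied to $\W'$ it gives only the first isomorphism, not the second. To get the second you would need the more general assertion ``$\W_1\subseteq\W_2$ with $\W_1$ full $\Rightarrow F(I)\otimes\mathcal{A}\W_1\cong F(I)\otimes\mathcal{A}\W_2$'', which you have not reduced to. The fix is simply to drop the reduction: the isometries $v^i_{k,a}$ live in the \emph{category} $\ca$, not in $\AW$, so you do not need $\Irr\subseteq\W$ to form them or to carry out the decomposition. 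The paper exploits exactly this and never enlarges the weight set; your detour through $\W'$ and the locally-finite projection $p=\sum_{k\in\Irr}p_k$ (which is not an element of $\AW$ when $\Irr$ is infinite, as you note) is unnecessary overhead.
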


\begin{proof}

 
We see abstractly that $\displaystyle \AW^{k}_{m,n}\cong \bigoplus_{s,t\in \Irr} Mor(X_t, Y_n)\otimes \AC^{k}_{s,t}\otimes \overline{Mor(X_{s}, Y_{m})}$ since an arbitrary element $f\in \AW^{k}_{m,n}$ can be decomposed uniquely as: $$\displaystyle f = \sum_{s,t\in \Irr} \sum_{\substack{V \in onb(X_t , Y_n) \\ W \in onb(X_s , Y_m)}}
\left[  (V V^{*} \otimes 1_{k}) f (1_{k}\otimes W W^{*}) \right]$$ where $onb(X_s , Y_m)$ is an orthonormal basis for $Mor(X_{s}, Y_{m})$ with respect to the inner product defined in the definition of annular algebras.  We see this decomposition does not depend on the choice of such a basis.  Thus, the isomorphism implemented by the decomposition is 

$$\displaystyle f \mapsto \sum_{s,t\in \Irr}  \sum_{\substack{V \in onb(X_t , Y_n) \\W \in onb(X_s , Y_m)}} V \otimes \left[  (V^{*} \otimes 1_{k}) f (1_{k}\otimes W ) \right] \otimes \overline{W};$$

This map has its inverse defined by taking $*$ in the third tensor component and then composing the morphisms in the obvious way.
 
  
 If we let $B_{s,m}$ denote an orthonormal basis of $Mor(X_s, Y_m)$ for all $s\in Irr (\mathcal C)$, $m \in \Lambda$, then we have a vector space isomorphism $$ \mathcal A \Lambda^j_{m,n} \cong \bigoplus_{s,t \in Irr(\mathcal C)} M_{B_{t,n} \times B_{s,m}}(\C) \otimes \mathcal A^j_{s,t} \mbox{, namely}\ (V \otimes 1_j) \circ h \circ (1_j \otimes W^*) \leftrightarrow E_{V,W} \otimes h.$$ Moreover, multiplication and  $\#$ on the whole algebra $\mathcal A \Lambda$ correspond exactly with those on the matrix and the tube algebra parts.

Next, for $s \in \Irr$, we define the set $\displaystyle I_{s}:=\bigsqcup_{m\in \W} I \times B_{s,m}$.  We see that as a $*$-algebra we can identify  $\displaystyle F(I) \otimes \AW \cong \bigoplus_{m,n \in \Lambda} \bigoplus_{s,t \in \Irr} F(I) \otimes M_{B_{t,n} \times B_{s,m}}(\C) \otimes \AC_{s,t} \cong \bigoplus_{s,t \in \Irr} F(I_t , I_s) \otimes \AC_{s,t}$. Since $\Lambda$ is full, $I_{t}$ is non-empty, and we can identify it with $I$ for all $t \in \Irr$. Hence, it follows that $F(I) \otimes \AW \cong F(I) \otimes \A$ as $*$-algebras.
\end{proof}

 As we shall see in the next section, this correspondence allows us to pass between representations of $\AW$ and $\AC$ for any full weight set $\W$. Before studying representation theory, we describe another useful way to realize annular algebras as the quotient of a much bigger graded algebra.  For any weight set $\W$, we define
$$\widetilde{\AW}:=\bigoplus_{\alpha\in Obj(\ca),\ i,j\in \W} Mor(\alpha \otimes Y_{i},\ Y_{j}\otimes \alpha)$$

Notice that the direct sum is taken over $\W$ and \textit{all} objects in contrast with the definition for annular algebras.  As with annular algebras, however, $x\in \tAW$ is denoted by a collection $x^{\alpha}_{i,j}$ where $\alpha\in Obj(\ca)$ and $i,j\in \W$ with only finitely many non-zero term. $\tAW$ becomes an associative algebra with multiplication defined by: 

$$(x\cdot y)^{\alpha}_{i,j}=  \sum_{s \in \W}\ \sum_{\beta ,\gamma \in Obj(\ca):\ \alpha=\beta \otimes \gamma)}( x^{\beta}_{s,j}\otimes 1_{\gamma})(1_{\beta}\otimes y^{\gamma}_{i,s}).$$

\noindent Note that associativity follows from strictness of our category.  For a $*$-structure, we need duals and standard solutions to the conjugate equations for every $\alpha \in Obj(\mathcal C)$ which are chosen once and for all in a consistent way.  A convenient notion for this purpose is a \textit{spherical structure} in the sense of \cite{Mu1}, Definition 2.6.  Such a choice for any rigid $C^*$-tensor category $\mathcal C$ is always possible by a result of Yamagami (see \cite{Y04}).  Thus we assume that we have chosen a spherical structure, which in particular picks a dual object (along with a standard solution to the conjugate equations) for each object in such a way that $\overline{\overline{\alpha}}=\alpha$.  Since $\tAW$ is built out of morphism spaces which already have a $*$, we will denote the $*$-structure here by $\#$ as in the annular algebra case which is defined as:

$$(x^{\#})^{\alpha}_{i,j}=  (\overline{R}^{*}_{\alpha} \otimes 1_{j} \otimes 1_{\alpha}) (1_{\alpha}\otimes (x^{\overline{\alpha}}_{j,i})^{*}\otimes 1_{\alpha})(1_{\alpha}\otimes 1_{i}\otimes R_{\alpha})$$

It is easy to check that $\#$ is a conjugate-linear, anti-isomorphic involution (by the definition of spherical structure).

We define the family of maps $\Psi^{\alpha}: Mor(\alpha \otimes Y_{i}, Y_{j}\otimes\alpha)\rightarrow \mathcal A \Lambda$ given by 
$$\displaystyle \Psi^{\alpha}(f)=\sum_{k \prec \alpha}\sum_{V\in onb(k, \alpha)}   (1_{j}\otimes V^{*})f(V\otimes 1_{i}).$$

Then the family of $\Psi^{\alpha}$ extends linearly to a surjective map $\Psi: \tAW\rightarrow \AW$.  It is also easy to see that $\Psi$ is a $*$-homomorphism.  Using basic linear algebra, one can see that $Ker(\Psi)$ is spanned by (not necessarily homogeneous) vectors of the form $f(s\otimes 1_{i}) - (1_{j}\otimes s)f\in \tAW$ for $f\in Mor(\alpha \otimes Y_{i}, Y_{j}\otimes \beta)$, and $s\in Mor(\beta, \alpha)$. 

We remark that the graphical calculus for annular algebras makes perfect sense in this setting, we simply allow side strings to be labeled by arbitrary objects.  In fact, we can now give a heuristic explanation for the words tube and annular associated to these algebras.

Take a diagram with top bottom and side strings as in our graphical calculus convention, and attach the bottom string to the inner disk of an annulus and the top strings to the boundary of the outer disk.  Then attach the side strings to each other around the ``bottom'' of the inner disk.  We allow isotopies in the interior of the annulus, so that the following pictures are equal:

 $$\gre{annulus11}=\gre{annulus21}$$

This picture explains the kernel of the map $\Psi$.  Cutting the bottom string and returning to a rectangular picture, the difference of the resulting homs spans $Ker(\Psi)$. We also remark that composing such pictures and decomposing the identity on the side strings yields the multiplication structure we defined for annular algebras.  

Such pictures can be formalized in the setting of Jones' \textit{planar algebras}.  The result is Jones' affine annular category of a planar algebra.  If $\Pl$ is an (unshaded) planar algebra the affine annular category $\AP$ is the category with objects given by $\N$, and morphisms all annular tangles labeled by $\Pl$ subject to local relations.  For proper definitions, see \cite{Jo3}, \cite{Jo4} and \cite{DGG}. Composition of morphisms is given by composing annuli.  This category can be made into an algebra in the obvious way, which we also call $\AP$.  If we let $\ca:=Proj(\Pl)$ be the projection category of a planar algebra, choose the objects $\W:=\{1_{k}\in P_{k,k}\}_{k\in \N}\subseteq Obj(\ca)$.  Then it follows from \cite{DGG} that $\AW\cong \AP$.  We will see an example of this correspondence in section $5$ in our analysis of $TLJ(\delta)$ categories.  We refer the reader to \cite{Jo2} and \cite{BHP} for the definitions of planar algebras and the second reference for the projection category of a planar algebra.

We proceed to analyze the structure of the algebraic dual space of $\AW$.  For $i,j\in \W$, we define the space of \textit{commutativity constraints} by 
$$CC_{i,j}:=\{\prod_{\alpha\in Obj(\ca)} c_{\alpha}\ :\ c_{\alpha}\in Mor(\alpha \otimes Y_{i}, Y_{j}\otimes \alpha),\ \text{and for all}\ f\in Mor(\alpha, \beta),\ c_{\beta}(f\otimes 1_{i})=(1_{j}\otimes f)c_{\alpha}\}$$ 

See immediately that $c=(c_{\alpha})_{\alpha\in Obj(\ca)}$ is uniquely determined by the terms $(c_{i})_{i\in \Irr}$, and any such sequence determines a family. We also notice that the condition defining commutativity constraints is dual to the condition defining $Ker(\Psi)$.  This leads to the following observation:

\begin{prop} The algebraic dual of $\AW_{i,j}$ is canonically isomorphic to $CC_{j,i}$, where $c=(c_{k})_{k\in \Irr}$ acts on $f=\sum_{k\in \Irr} f_{k}$ with $f_{k}\in \AW^{k}_{i,j}$ by $c(f)=\sum_{k} Tr((1_j\otimes \overline{R}^{*}_{k})(f_{k}\otimes 1_{\overline{k}})(1_k\otimes c_{\overline{k}})(\overline{R}_{k}\otimes 1_j))$.
\end{prop}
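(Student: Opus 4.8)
The plan is to reduce everything to a single graded piece. Since $\AW_{i,j}=\bigoplus_{k\in\Irr}\AW^{k}_{i,j}$ is a finitely supported direct sum, its algebraic dual is the product $\prod_{k\in\Irr}(\AW^{k}_{i,j})^{*}$, so the first (and main) step is to identify, for each $k$, the dual of $\AW^{k}_{i,j}=Mor(X_{k}\otimes Y_{i},Y_{j}\otimes X_{k})$ with the space $\AW^{\overline{k}}_{j,i}=Mor(X_{\overline{k}}\otimes Y_{j},Y_{i}\otimes X_{\overline{k}})$ through the pairing occurring in the statement,
$$\langle f,g\rangle_{k}:=Tr\bigl((1_{j}\otimes\overline{R}^{*}_{k})(f\otimes 1_{\overline{k}})(1_{k}\otimes g)(\overline{R}_{k}\otimes 1_{j})\bigr),\qquad f\in\AW^{k}_{i,j},\ g\in\AW^{\overline{k}}_{j,i},$$
the composite being an endomorphism of $Y_{j}$. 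Granting this, $\prod_{k}\AW^{\overline{k}}_{j,i}$ is, after relabelling the index $k\rightsquigarrow\overline{k}$, exactly the set of arbitrary families $(c_{m})_{m\in\Irr}$ with $c_{m}\in Mor(X_{m}\otimes Y_{j},Y_{i}\otimes X_{m})$; by the remark following the definition of commutativity constraints, every such family extends uniquely to an element of $CC_{j,i}$, so $CC_{j,i}\cong\prod_{m}Mor(X_{m}\otimes Y_{j},Y_{i}\otimes X_{m})$. Composing these identifications gives $(\AW_{i,j})^{*}\cong CC_{j,i}$, and unwinding them produces precisely the asserted action of $c=(c_{k})_{k\in\Irr}$ on $f=\sum_{k}f_{k}$, with the $\AW^{k}_{i,j}$-part $f_{k}$ paired against the $\overline{k}$-component $c_{\overline{k}}$.

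The content is the non-degeneracy of $\langle\ ,\ \rangle_{k}$, and for this I would go through the positive-definite trace $\Omega$ defined above. A direct computation from the definition of the annular product and of $\Omega$ shows that for $f\in\AW^{k}_{i,j}$ and $g\in\AW^{\overline{k}}_{j,i}$ the only nonzero component of $(f\cdot g)^{0}$ is $(f\cdot g)^{0}_{j,j}=d(X_{k})^{-1}(1_{j}\otimes\overline{R}^{*}_{k})(f\otimes 1_{\overline{k}})(1_{k}\otimes g)(\overline{R}_{k}\otimes 1_{j})$, the factor $d(X_{k})^{-1}$ coming from normalising $\overline{R}_{k}$ to an orthonormal element of $Mor(id,X_{k}\otimes X_{\overline{k}})$ for the inner product used in the annular product; hence $\langle f,g\rangle_{k}=d(X_{k})\,\Omega(f\cdot g)$. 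Now $\#$ restricts (clear from its defining formula) to a conjugate-linear bijection $\AW^{\overline{k}}_{j,i}\to\AW^{k}_{i,j}$, so the two spaces have the same finite dimension, and for $f\neq 0$ the choice $g=(f^{\#})^{\overline{k}}_{j,i}$ gives $\langle f,g\rangle_{k}=d(X_{k})\,\Omega(f\cdot f^{\#})=d(X_{k})\,\Omega\bigl((f^{\#})^{\#}f^{\#}\bigr)>0$ by positive-definiteness of $\Omega$. A bilinear pairing between finite-dimensional spaces of equal dimension with trivial left kernel is perfect, so indeed $(\AW^{k}_{i,j})^{*}\cong\AW^{\overline{k}}_{j,i}$ canonically.

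It then remains to assemble the pieces: given $\phi\in(\AW_{i,j})^{*}$, for each $k$ there is a unique $c_{\overline{k}}\in\AW^{\overline{k}}_{j,i}$ with $\phi|_{\AW^{k}_{i,j}}=\langle\,\cdot\,,c_{\overline{k}}\rangle_{k}$; the resulting family, reindexed over $\Irr$, extends uniquely to $c\in CC_{j,i}$, and $\phi\mapsto c$ is manifestly inverse to the map of the statement. One should also note independence of the auxiliary data (the representatives $X_{k}$ and the standard solutions $R_{k},\overline{R}_{k}$), which is automatic since $\Omega$ and the annular product do not depend on them and standard solutions are essentially unique. The one real obstacle is bookkeeping: getting the identity $\langle f,g\rangle_{k}=d(X_{k})\,\Omega(f\cdot g)$ right from the definitions (including the orthonormal-basis normalisation in the product) and keeping the $k$ versus $\overline{k}$ labelling straight, so that the functional genuinely pairs $\AW^{k}_{i,j}$ with the component $c_{\overline{k}}$ of a commutativity constraint, in agreement with the displayed formula.
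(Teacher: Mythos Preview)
Your argument is correct. It differs from the paper's (implicit) approach, however, and the comparison is worth noting.

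The paper does not spell out a proof but indicates the intended one in the sentence immediately preceding the proposition: the defining condition for $CC_{j,i}$, namely $c_{\beta}(f\otimes 1_{j})=(1_{i}\otimes f)c_{\alpha}$ for all $f\in Mor(\alpha,\beta)$, is dual under the trace pairing to the relations $f(s\otimes 1_{i})-(1_{j}\otimes s)f$ spanning $Ker(\Psi)$. Since $\AW_{i,j}\cong \tAW_{i,j}/Ker(\Psi)$, its dual is the annihilator of $Ker(\Psi)$ inside $(\tAW_{i,j})^{*}$, which is exactly $CC_{j,i}$. This approach explains \emph{why} commutativity constraints are the right object and uses the quotient description of $\AW$ by $\Psi$ in an essential way.

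Your route bypasses $\tAW$ and $\Psi$ entirely: you exploit the direct sum $\AW_{i,j}=\bigoplus_{k}\AW^{k}_{i,j}$, dualize each finite-dimensional summand via the trace pairing (with non-degeneracy coming cleanly from positive-definiteness of $\Omega$ and the conjugate-linear bijection $\#$), and then invoke the remark that any family $(c_{k})_{k\in\Irr}$ extends uniquely to a commutativity constraint. This is more elementary and self-contained; in particular your identity $\langle f,g\rangle_{k}=d(X_{k})\,\Omega(f\cdot g)$ is a nice way to reduce non-degeneracy to something already established. The cost is that the role of the commutativity-constraint condition is somewhat hidden: it enters only through the cited remark, rather than appearing as the natural dual to the annular relations.
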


  We encourage the reader to draw a picture of the above equation.  Here we view $f_{k}$ and $c_{k}$ as morphisms in $Mor (X_k\otimes Y_i,\ Y_j\otimes X_k)$ and $Mor (X_k\otimes Y_j,\ Y_i\otimes X_k)$ respectively, and composition is categorical (not annular) composition.

For more details on commutativity constraints, see \cite{DGG}.  We will only need them here in Section $6.2$ when using the description of $\boxtimes$ from \cite{DGG}.
\end{section}

\begin{section}{Representations}

The representation category $Rep(\AW)$  is simply the category of (non-degenerate) $*$-representations of $\AW$ as bounded operators on a Hilbert space.  We begin this section by showing that for a full weight set, $Rep(\AW)$ is equivalent to $Rep(\AC)$, removing the ambiguity of choosing a weight set in our discussions of representation theory.  The resulting representation category has interesting and useful applications.  It comes equipped with a tensor functor making it into a braided monoidal category.  It was shown in \cite{DGG} that the category of finite dimensional representations is (contravariantly) monoidally equivalent to the Drinfeld center, $Z(\ca)$.  In the case where $\Irr$ is finite, the tube algebra $\AC$ is finite dimensional.  Thus understanding its representation theory becomes a computable way of determining the categorical data of the Drinfeld center, and as far as we know is the most commonly used method for understanding $Z(\ca)$ (see \cite{I}, \cite{I2}).

Stefaan Vaes has observed that in general, $Rep(\AC)$ is (contravariantly) equivalent to the category $Z(\text{ind-}\ca)$ introduced and studied by Neshveyev and Yamashita in \cite{NY2}.  The ind-category is basically the ``direct sum completion" of $\ca$, defined by allowing arbitrary direct sums in $\ca$.  It is still a tensor category (though no longer rigid), hence one can apply the usual definitions to obtain a Drinfeld center.  A more detailed discussion of the correspondence between $Rep(\AC)$ and $Z(\text{ind-}\ca)$ will appear in a paper currently in preparation by Popa, Shlyakhtenko and Vaes. 

Another application of the representation theory is to provide natural definitions for approximation and rigidity properties such as amenability, the Haagerup property, and property (T) for rigid $C^{*}$-tensor categories.  One simply generalizes the corresponding definitions for groups given in terms of representation theory, using the trivial representation of $\AC$ (see Lemma 4.12) in place of the trivial representation for groups. 

The main technical difficulty we have to face is a universal bound on the norm of $\AC$ for non-degenerate $*$-representations.  We will see the combinatorial data of the category provides us with a satisfactory universal bound.  With this in hand, we can take arbitrary direct sums of representations, and construct a universal $C^{*}$-completion of $\AC$.  We begin with the formal definitions and immediate consequences.

\begin{defi} A \textit{non-degenerate representation} of an annular algebra $\AW$ is a $*$-homomorphism $\pi: \AW\rightarrow B(H))$ for some Hilbert space $H$ with the property that $\pi(\AW)\xi=0$ for $\xi\in H$ implies $\xi=0$.  We denote the category of non-degenerate representations with bounded intertwiners $Rep(\AW)$
\end{defi}

The non-degeneracy condition is minor.  An arbitrary $*$ representation decomposes as a direct sum of a non-degenerate subspace and a degenerate space, so we can restrict our attention to the non-degenerate piece.  For a non-degenerate representation $(\pi, H)$ and for $k\in \W$, we define $H_{k}:=\pi(p_{k})H \le H$, where $p_{k}$ is the identity projection in $\AW_{k,k}$ described above.  We easily see that $H\cong \oplus_{k\in \W} H_{k}$.  In this way, $\pi$ defines maps $\pi: \AW_{k,m}\rightarrow B(H_k, H_m)$.  Conversely, if we have a sequence of Hilbert spaces $\{H_{k}\}_{k\in \W}$ and a family of maps $\pi_{k,m}: \AW_{k,m}\rightarrow B(H_{k}, H_{m})$ compatible with multiplication and the $*$-structure on $\AW$, we can define a unique representation $\displaystyle \pi:\AW\rightarrow B(H)$ where $H:=\oplus_{k\in \W} H_k$.  It is often convenient to pass between these two pictures.

All representations we consider in this paper are non-degenerate.

\begin{theo} If $\W$ is full, then $Rep(\AW)\cong Rep(\AC)$ as additive categories.
\end{theo}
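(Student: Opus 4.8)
The plan is to leverage Proposition~3.5, which already gives a $*$-isomorphism $F(I)\otimes\AC\cong F(I)\otimes\AW$ for a countably infinite index set $I$ whenever $\W$ is full. From such an isomorphism one expects an equivalence of representation categories by a standard Morita-type argument, but one has to be careful because we are working with (non-degenerate) $*$-representations on Hilbert spaces, so the argument is not purely algebraic: one must check that the relevant idempotents are genuine self-adjoint projections and that non-degeneracy is preserved in both directions.

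First I would recall the general principle: if $B$ is a $*$-algebra and $e\in M(B)$ (or more concretely here, if $e$ is a self-adjoint idempotent in $F(I)\otimes B$ that is ``full'' in the sense that it is not annihilated by any nonzero representation and $B e B$ is dense in the appropriate sense), then compression $\pi\mapsto \pi(e)(\cdot)\pi(e)$ sets up an equivalence between $Rep(F(I)\otimes B)$ and $Rep(eBe)$. Concretely, choose a distinguished element $i_0\in I$ and let $e:=E_{i_0,i_0}\otimes 1$ inside the \emph{multiplier} picture; then $e(F(I)\otimes B)e\cong B$, and $F(I)\otimes B$ is the ``amplification'' of $B$ by $F(I)$. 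So the real content reduces to: (i) $Rep(\AW)\cong Rep(F(I)\otimes\AW)$ via amplification, and symmetrically $Rep(\AC)\cong Rep(F(I)\otimes\AC)$; and (ii) $Rep(F(I)\otimes\AW)\cong Rep(F(I)\otimes\AC)$ directly from Proposition~3.5. Composing these three equivalences gives $Rep(\AW)\cong Rep(\AC)$, and additivity is clear since every functor in sight is additive (direct sums are sent to direct sums).

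For step (i), given a non-degenerate $*$-representation $\rho$ of $\AW$ on $H$, define the amplification on $\ell^2(I)\otimes H$ by $E_{i,j}\otimes x\mapsto E_{i,j}\otimes\rho(x)$; this is non-degenerate because $\rho$ is, and it is clearly functorial and fully faithful on intertwiners (an intertwiner for the amplified representation must be block-diagonalizable against the matrix units, hence is $1\otimes T$ with $T$ an $\AW$-intertwiner). Essential surjectivity: given a non-degenerate representation $\sigma$ of $F(I)\otimes\AW$ on $K$, the projections $\sigma(E_{i,i}\otimes p)$ (summed over $p=p_m$, $m\in\W$) let one identify $K\cong\ell^2(I)\otimes H$ with $H:=\bigoplus_m\sigma(E_{i_0,i_0}\otimes p_m)K$, and the off-diagonal matrix units give the unitaries implementing this identification; one checks $\sigma$ is then the amplification of the corner representation $x\mapsto\sigma(E_{i_0,i_0}\otimes x)$ restricted appropriately. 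The only subtlety is that $F(I)$ is non-unital, so ``$E_{i_0,i_0}\otimes p_m$'' must be read as $\sigma(E_{i_0,i_0}\otimes p_m)$ with the $p_m$'s supplying local units for $\AW$; non-degeneracy of $\sigma$ guarantees $\sum_i\sum_m\sigma(E_{i,i}\otimes p_m)$ converges strongly to $1_K$, which is what makes the decomposition work. Step (ii) is then immediate: transport along the fixed $*$-isomorphism of Proposition~3.5 sends non-degenerate representations to non-degenerate representations and intertwiners to intertwiners, bijectively.

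The main obstacle I anticipate is purely bookkeeping around non-unitality and non-degeneracy: Proposition~3.5 is stated as an abstract $*$-algebra isomorphism, but $\AW$, $\AC$, and $F(I)$ all lack units (they have only local units given by the $p_m$ and the $E_{i,i}$), so one cannot blindly invoke ``Morita equivalence implies equivalent representation categories'' — one must verify by hand that the compression/amplification functors land in \emph{non-degenerate} representations and are mutually inverse up to natural isomorphism. Once the local-unit approximate-identity arguments are set up carefully (using that any non-degenerate representation is the strong-limit completion of its restriction to the $p_m H$), the rest is routine; there is no analytic difficulty here since we are not yet passing to any $C^*$-closure. I would present the argument by first proving the amplification equivalence $Rep(B)\cong Rep(F(I)\otimes B)$ as a lemma for an arbitrary $*$-algebra $B$ with local units indexed by a set mapping into $I$, then applying it twice and splicing with Proposition~3.5.
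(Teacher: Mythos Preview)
Your proposal is correct and takes essentially the same approach as the paper: the paper's proof consists of the single sentence ``This follows from Proposition 3.5,'' and what you have written is precisely the standard amplification/compression argument that unpacks why a $*$-isomorphism $F(I)\otimes\AC\cong F(I)\otimes\AW$ yields an equivalence of non-degenerate representation categories. Your care with local units and non-degeneracy is appropriate and fills in details the paper leaves implicit.
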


\begin{proof}  This follows from Proposition $3.5$.

\end{proof}

At this point, since we are mostly interested in representation theory, one might wonder why we bother considering annular algebras with arbitrary weight sets.  The reason is that many categories have a nice description with respect to some particular weight set.  For example, the planar algebras of V. Jones come equipped with a weight set indexed by the natural numbers and given by the number of strings on boundary components.  The resulting annular algebra is called the \textit{affine annular category} of the planar algebra, which is typically viewed as a category (see Definition 3.2) instead of an algebra  \cite{Jo4}, \cite{Jo3}, \cite{DGG}.   With this weight set, the structure of the annular algebra may become transparent via skein theory, and often has a simple description in terms of planar diagrams.  This is clearly illustrated in the $TLJ(\delta)$ categories which we discuss in the next section.  For these categories, the tube algebra at first glance may seem daunting, but applying Theorem 4.2, we can transport the classification of irreducible affine annular representations by Jones and Reznikoff (see \cite{Jo3}) from the planar algebra setting to the tube algebra setting.  This allows us to analyze the tube algebras for these categories, which appears to be quite difficult without these techniques.

In light of the above theorem, however, we lose little generality by focusing our attention on the tube algebra $\AC$.  All of the following results and proofs will be made for $\AC$, but can easily be translated to the more general setting of $\AW$ where $\Lambda$ is full.  The remainder of this section will focus on the demonstrating the existence of a universal $C^{*}$-algebra, denoted $C^{*}(\AC)$, which encodes the representation theory of $\AC$.  This universal $C^{*}$-algebra is directly analogous to and generalizes in some sense the universal $C^{*}$-algebra for groups.  In studying the algebra for groups, the notion of a positive definite function on the group is quite handy, and here we introduce a similar notion.  As we will see in the next section, the true analogy with groups is not with $\AC$ itself, but with the centralizer algebras $\AC_{k,k}$.  The corners are unital $*$-algebras with unit $p_{k}$, and hence have a positive cone.  One of the key points is that to encode the representation theory of the whole tube algebra requires us to extend this positive cone to include positive elements coming from ``outside'' $\AC_{k,k}$ itself.  In particular, we want elements of the form $f^{\#}\cdot f$ with $f\in \AC_{k,m}$ for arbitrary $m$ to be considered positive.  Thus any``local'' notion  of positive definite functions for the centralizer algebras needs to capture this kind of positivity.
 
 \begin{defi} For $k\in \Irr$, a linear functional $\phi: \AC_{k,k}\rightarrow \C$ is called a \textit{weight k annular state} if
 \begin{enumerate}
 \item
 $\phi(p_{k})=1$.
 \item
 $\phi(f^{\#}\cdot f)\ge 0$ for all $f\in \AC_{k,m}$ and $m\in \Irr$.
 \end{enumerate}
 
 We denote the collection of weight $k$ annular states $\Phi_{k}$ (for general $\W$, we denote this set by $\Phi\W_k$)
 \end{defi}

The goal now is to prove a $GNS$ type theorem, which takes a weight $k$ annular state and produces a unique ``k-cyclic'' representation of the whole tube algebra.  If $(\pi, H)\in Rep(\AC)$ and $\xi \in \pi(p_{k})H$ is a unit vector, then the functional $\langle \pi(\ .\ )\xi, \xi \rangle$ restricted to $\AC_{k,k}$ is a weight $k$-annular state.  We will show all weight $k$ annular states are of this form.  The positivity condition in the definition assures that when constructing a Hilbert space, the natural inner product will be positive semidefinite. The  only difficulty generalizing the usual $GNS$ construction is that $\AC$ does not already have a natural norm structure, so we cannot use positivity to assert boundedness of the tube algebra action as in the usual $C^{*}$-algebra GNS construction.  Our situation is analogous to groups, but even there, group elements must have norm $1$, so the action of an arbitrary element in the group algebra is bounded in the $L^{1}$ norm.  
 
 The trick will be to take an annular state and reduce boundedness of the tube algebra action to the situation of a positive linear functional on a finite dimensional $C^{*}$-algebra.  Recall the functional $\omega: \AC\rightarrow \C$ defined right before Definition \ref{tubalg}.
 
 \begin{lemm} Let $\displaystyle y\in \AC^{t}_{m,n}$ for $t\in \Irr$. Then, $\phi(x^{\#}\cdot y^{\#}\cdot y\cdot x)\le d(X_t)^{2}\omega(y \cdot y^{\#})\phi(x^{\#}\cdot x)$ for all $\phi\in \Phi_{k}$ and $x\in \AC_{k, m}$. 
 \end{lemm}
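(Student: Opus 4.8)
The plan is to reduce the inequality to a Cauchy–Schwarz–type estimate for the annular state $\phi$, where the crucial input is that $\phi(z^{\#}\cdot z)\ge 0$ for $z$ ranging over \emph{all} of $\AC_{k,\bullet}$ (not just $\AC_{k,k}$). First I would fix $y\in\AC^{t}_{m,n}$ and $x\in\AC_{k,m}$, and consider the element $y\cdot x\in\AC_{k,n}$. The positivity axiom gives a positive semidefinite sesquilinear form $(a,b)\mapsto\phi(b^{\#}\cdot a)$ on $\AC_{k,\bullet}$, so by Cauchy–Schwarz
$$\phi(x^{\#}\cdot y^{\#}\cdot y\cdot x)=\phi\bigl((y\cdot x)^{\#}\cdot(y\cdot x)\bigr)$$
can be controlled once we understand how $y^{\#}\cdot y$ acts. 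The natural move is to show that $y^{\#}\cdot y\le d(X_t)^{2}\,\omega(y\cdot y^{\#})\,p_{m}$ in a suitable sense, i.e. that $d(X_t)^{2}\omega(y\cdot y^{\#})p_m-y^{\#}\cdot y$ is a sum of elements of the form $z^{\#}\cdot z$ with $z\in\AC_{m,\bullet}$; applying $\phi(x^{\#}\cdot(\,\cdot\,)\cdot x)$ and using positivity of $\phi$ on $\AC_{k,\bullet}$ then finishes the argument, since $\phi(x^{\#}\cdot p_m\cdot x)=\phi(x^{\#}\cdot x)$.

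To establish that operator inequality concretely, I would write $y\in\AC^{t}_{m,n}=Mor(X_t\otimes Y_m,\,Y_n\otimes X_t)$ and compute $y^{\#}\cdot y\in\AC_{m,m}$ explicitly using the product and $\#$ formulas from Section 3. The point is that $y^{\#}\cdot y$ is supported on simple summands $l\prec X_{\bar t}\otimes X_t$, and in each such summand the relevant morphism is, up to the orthonormal basis vectors $V\in onb(X_l,\,X_{\bar t}\otimes X_t)$, a composition built from $y$ and $y^{*}$. The scalar $\omega(y\cdot y^{\#})=\sum_k tr_k((y\cdot y^{\#})^{0}_{k,k})$ unwinds (using $tr_n$ and the conjugate equations for $X_t$) to a positive multiple of $\|y\|^2$-type quantity — essentially a normalized categorical trace of $y y^{*}$ — and the dimension factor $d(X_t)^{2}$ is exactly what is needed to dominate the $l=0$ contribution together with all other $l$, after using the standard estimates $\|V^{*}(\,\cdot\,)V\|\le\|\cdot\|$ and the dimension bounds $d(X_l)\le d(X_t)^{2}$ for $l\prec X_{\bar t}X_t$. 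I would verify the inequality summand-by-summand in $l$, bounding each positive-definite piece of $d(X_t)^2\omega(y\cdot y^{\#})p_m-y^{\#}\cdot y$ by an explicit $z^{\#}\cdot z$.

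The main obstacle, as I see it, is the bookkeeping in that middle step: correctly expanding $y^{\#}\cdot y$ and $\omega(y\cdot y^{\#})$ through the orthonormal-basis sums and the spherical/standard solutions, and then packaging the difference as a genuine sum of $z^{\#}\cdot z$ terms with the right normalization so that the constant that emerges is precisely $d(X_t)^{2}\omega(y\cdot y^{\#})$ and not something larger. The conceptual content is just Cauchy–Schwarz plus a positivity-of-an-operator-inequality lemma in the tube algebra; the technical content is the graphical/categorical computation pinning down the constant, which I would carry out using the annular graphical calculus introduced just before this lemma, where the composition $y^{\#}\cdot y$ becomes a single diagram whose norm is manifestly controlled by capping off the $X_t$-strand.
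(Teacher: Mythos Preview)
Your overall shape is right: one wants $d(X_t)^{2}\omega(y\cdot y^{\#})\,p_m - y^{\#}\cdot y$ to be a sum of elements $z^{\#}\cdot z$ with $z\in\AC_{m,\bullet}$, after which sandwiching by $x^{\#}(\,\cdot\,)x$ and applying the annular-state axiom finishes the proof. The gap is in how you propose to produce that sum-of-squares decomposition.

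Your plan is to expand $y^{\#}\cdot y=\sum_{l\prec \overline{X}_tX_t}a_l$ with $a_l\in\AC^{l}_{m,m}$ and handle the inequality ``summand-by-summand in $l$'', invoking $d(X_l)\le d(X_t)^{2}$. This cannot work as written: positivity in the tube algebra does not decompose along the $\Irr$-grading. For $l\neq 0$ the graded piece $a_l$ is not even $\#$-selfadjoint (its adjoint lies in grade $\bar l$), so there is no sense in which ``each piece is dominated by a multiple of $p_m$'', and the dimension bound has nothing to act on. Any sum-of-squares witness must mix all the graded pieces simultaneously, and you have not said how to build one. The Cauchy--Schwarz remark at the start is also a red herring; it does not by itself produce the constant $d(X_t)^{2}\omega(y\cdot y^{\#})$.

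The paper's key move is to manufacture a finite-dimensional $C^{*}$-algebra in which the needed inequality is the trivial one $\psi(w)\le\|w\|\psi(1)$ for a positive functional $\psi$. Concretely, for fixed $x$ and $\phi$ one defines a functional $\tilde\phi_x$ on $End(X_t\otimes X_m\otimes\overline{X}_t)$ by inserting a morphism into an annular picture bordered by $x$ and $x^{\#}$ and then applying $\phi\circ\Psi$. Resolving the identity on $X_tX_m\overline{X}_t$ into simples and using the annular-state axiom shows $\tilde\phi_x$ is positive. Setting $\tilde y:=(1_n\otimes\overline R^{*}_t)(y\otimes 1_{\bar t})\in Mor(X_tX_m\overline{X}_t,\,X_n)$, one checks graphically that $\tilde\phi_x(\tilde y^{*}\tilde y)=\phi(x^{\#}y^{\#}yx)$ and $\tilde\phi_x(1)=d(X_t)\,\phi(x^{\#}x)$; since $X_n$ is simple, $\tilde y\tilde y^{*}$ is scalar and a trace comparison gives $\|\tilde y^{*}\tilde y\|=d(X_t)\,\omega(y\cdot y^{\#})$. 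The inequality then drops out. If you like, this \emph{does} retroactively furnish your sum-of-squares witness: take a square root of $\|\tilde y^{*}\tilde y\|\cdot 1-\tilde y^{*}\tilde y$ in $End(X_tX_m\overline{X}_t)$ and push it through the same simple-object decomposition used to prove positivity of $\tilde\phi_x$. The point is that the finite-dimensional $C^{*}$-algebra is where the square root lives; there is no way to see it one $l$ at a time.
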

 
 \begin{proof}  Let $\displaystyle x=\sum_{j\in \Irr} x_{j}\in \AC_{k,m}$ where this sum is finite and each $x_{j}\in \AC^{j}_{k,m}$.   Then define the object $\alpha:=\oplus X_{j}$, where the $j$ here are the same $j$ in the description of $x$. Then viewing $x\in Mor(\alpha \otimes X_{k},\ X_{m}\otimes \alpha)$, we have $\Psi^{\alpha}(x)=x$.  Notice that since each $X_{j}$ has a chosen dual (the object chosen to represent the equivalence class of $\overline{X}_{j}$), this distinguishes a conjugate object $\overline{\alpha}$.  Let $\phi\in \Phi_{k}$.  We define a linear functional $\tilde{\phi}_ {x}$ on the finite dimensional $C^{*}$-algebra $End(X_t \otimes X_m \otimes \overline{X}_{t})$ by 
 
 $$\tilde{\phi}_{x}(\ .\ ):= \phi\circ \Psi^{\overline{t \alpha}\otimes t \alpha}\left(\grc{affinestatetangle22}\right).$$

To evaluate $\tilde{\phi}_{x}(\ . \ )$ on a morphism $f\in End(X_t \otimes X_m \otimes \overline{X}_{t})$, we insert $f$ into the unlabeled disc in the above diagram and evaluate.   We claim that $\tilde{\phi}_{x}$ is a positive linear functional on the finite dimensional $C^{*}$-algebra $End(X_t \otimes X_m \otimes \overline{X}_{t})$.  For positive $w$ in this algebra, we see that 

$$\tilde{\phi}_{x}(w)=\sum_{j\in \Irr}\  \sum_{V \in onb({j}, tm\overline{t})} \tilde{\phi}_{x}(w^{\frac{1}{2}}VV^{*}w^{\frac{1}{2}})$$
$$= \!\!\!\!\!\! \sum_{j\in \Irr} \sum_{V\in onb({j}, tm\overline{t})} \!\!\!\!\!\! \phi \left( \! \left[\Psi^{t} \left( (V^{*} w^{\frac{1}{2}} \otimes 1_{t} ) (1_{t}\otimes 1_{m}\otimes R_{t}) \right) \! \cdot \! \Psi^{\alpha}(x)\right]^{\#} \cdot \left[\Psi^{t} \left((V^{*} w^{\frac{1}{2}} \otimes 1_{t} ) (1_{t}\otimes 1_{m}\otimes R_{t}) \right) \! \cdot \! \Psi^{\alpha}(x)\right] \! \right)$$
which is non-negative by definition of annular state.  Then by positivity of $\tilde{\phi}_{x}$, $$\tilde{\phi}_{x}(w)\le \| w\| \tilde{\phi}_{x}(1_{tm\overline{t}}) = \| w \| \phi \circ \Psi^{\overline{t \alpha} \otimes t \alpha} \left( \grd{affinestateidentity} \right) = \|w\| d(X_t) \phi(x^{\#}\cdot x).$$  In the last equality we use the ``annular relation'' describing the kernel of $\Psi$ to pull the side $t$-cap from the left around to the right, yielding a closed $t$-circle hence a factor of $d(X_t)$.  Now for $y\in (X_t\otimes X_m, X_n\otimes X_t)$, consider the morphism $\tilde{y}:=(1_{n}\otimes \overline{R}^{*}_{t}) ( y \otimes 1_{\overline{t}}) \in Mor(X_t \otimes X_m \otimes \overline{X}_t ,\ X_n)$.  Then $\tilde{y}^{*}\tilde{y}\in End(X_t \otimes X_m \otimes \overline{X}_{t})$, and we see that
 
$$\phi(x^{\#}\cdot y^{\#} \cdot y\cdot x)=\tilde{\phi}_{x}(\tilde{y}^{*}\tilde{y})\le d(X_t )\|\tilde{y}^* \tilde{y}\| \phi(x^{\#}\cdot x) = d(X_t )\|\tilde{y} \tilde{y}^*\| \phi(x^{\#}\cdot x) = d(X_t )^{2}\omega(y \cdot y^\#)\phi(x^{\#}\cdot x)$$
 
For the last inequality, note that $\tilde{y} \tilde{y}^*$ is a scalar times $1_n$ ($X_n$ being simple), so to find that scalar we  apply the categorical trace and compare with $\omega(y \cdot y^\#)$, yielding the required result (we recommend the reader draw a picture here).
\end{proof}

We note the proof of this lemma has obvious modifications for general annular algebras associated to full weight sets.

Now,  if $\phi\in \Phi_{k}$, we define a sesquilinear form on the vector space $\hat{H}_{\phi}:=\bigoplus_{m\in \Irr}\AC_{k,m}$ by $\langle x , y \rangle_{\phi}:=\phi(y^{\#}\cdot x)$.  By definition this form is positive semi-definite.   Furthermore, this vector space has a natural action of $\AC$ by left multiplication.  We construct a Hilbert space by taking the quotient by the kernel of this form and completing, which we denote $H_{\phi}$.  Recall an arbitrary $y\in \AC$ can be written $\displaystyle y=\sum_{m,n, j\in \Irr} y^{j}_{m,n}$ where this sum is finite and each $y^{j}_{m,n}\in \AC^{j}_{m,n}$.  By the previous lemma, each $y^{j}_{m,n}$ preserves the kernel of the form and is bounded, therefore we have $\pi_{\phi}(y^{j}_{m,n})\in B(H_{\phi})$.  Extending linearly, $\pi_{\phi}: \AC\rightarrow B(H)$ is a (non-degenerate) $*$-representation of the tube algebra.
  
\begin{cor} A functional $\phi: \AC_{k,k}\rightarrow \mathbb{C}$ is in $\Phi_{k}$ if and only if there exists a non-degenerate $*$-representation $(\pi, H)$ of $\AC$, and a unit vector in $\xi\in \pi(p_{k})H$, such that $\phi(x)=\langle \pi(x)\xi, \xi\rangle$.  Furthermore the sub-representation on $H_{\xi}:=[\pi(\AC)\xi]\subseteq H$ is unitarily equivalent to the representation $H_{\phi}$ described above.
\end{cor}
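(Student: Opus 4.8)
The plan is to prove the two implications of the "if and only if" separately, with the bulk of the work going into the "only if" direction, which is essentially the GNS construction whose hard analytic core — boundedness of the $\AC$-action — was already established in Lemma 4.5. For the "if" direction, suppose $(\pi,H)\in Rep(\AC)$ and $\xi\in\pi(p_k)H$ is a unit vector, and set $\phi(x):=\langle\pi(x)\xi,\xi\rangle$ for $x\in\AC_{k,k}$. Then $\phi(p_k)=\langle\pi(p_k)\xi,\xi\rangle=\langle\xi,\xi\rangle=1$ since $\pi(p_k)\xi=\xi$, and for $f\in\AC_{k,m}$ we compute $\phi(f^{\#}\cdot f)=\langle\pi(f^{\#}f)\xi,\xi\rangle=\langle\pi(f)\xi,\pi(f)\xi\rangle\ge 0$, using that $\pi$ is a $*$-homomorphism and that $f^{\#}\cdot f\in\AC_{k,k}$ so that $\phi$ is actually defined on it. Hence $\phi\in\Phi_k$.

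For the "only if" direction, let $\phi\in\Phi_k$ and form the pre-Hilbert space $\hat H_\phi=\bigoplus_{m\in\Irr}\AC_{k,m}$ with the positive semi-definite sesquilinear form $\langle x,y\rangle_\phi=\phi(y^{\#}\cdot x)$, exactly as constructed in the paragraph preceding this corollary. Quotienting by $N_\phi:=\{x:\langle x,x\rangle_\phi=0\}$ (which by Cauchy--Schwarz is the radical of the form) and completing gives the Hilbert space $H_\phi$. Left multiplication by elements of $\AC$ descends to $\hat H_\phi/N_\phi$: one must check that $N_\phi$ is a left ideal for the action, but this is immediate from Lemma 4.5, which shows each homogeneous component $y^j_{m,n}$ of an element $y\in\AC$ satisfies $\langle y^j_{m,n}\cdot x, y^j_{m,n}\cdot x\rangle_\phi\le d(X_j)^2\,\omega(y^j_{m,n}\cdot(y^j_{m,n})^{\#})\,\langle x,x\rangle_\phi$, so $x\in N_\phi$ forces $y^j_{m,n}\cdot x\in N_\phi$, and the bound simultaneously shows $\pi_\phi(y^j_{m,n})$ extends to a bounded operator on $H_\phi$. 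Extending linearly over the finitely many homogeneous components gives a $*$-representation $\pi_\phi:\AC\to B(H_\phi)$ (the $*$-homomorphism property follows since $\AC$ acts by multiplication and $\langle\cdot,\cdot\rangle_\phi$ is built from $\phi$ via $y^{\#}$); it is non-degenerate because $\AC$ has local units $p_m$ and $\bigoplus_m \AC_{k,m}$ is spanned by elements of the form $p_m\cdot x$. Take $\xi_\phi:=[p_k]\in H_\phi$, the class of $p_k\in\AC_{k,k}=\AC p_k p_k\subseteq\hat H_\phi$; then $\|\xi_\phi\|^2=\phi(p_k^{\#}\cdot p_k)=\phi(p_k)=1$, $\xi_\phi\in\pi_\phi(p_k)H_\phi$, and for $x\in\AC_{k,k}$ we get $\langle\pi_\phi(x)\xi_\phi,\xi_\phi\rangle_\phi=\langle x\cdot p_k,p_k\rangle_\phi=\phi(p_k^{\#}\cdot x\cdot p_k)=\phi(x)$, recovering $\phi$.

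It remains to identify the cyclic subrepresentation. Given any $(\pi,H,\xi)$ realizing $\phi$ as in the statement, consider the map $\hat H_\phi\to H_\xi:=[\pi(\AC)\xi]$ sending $x\mapsto\pi(x)\xi$. This is well-defined on the quotient by $N_\phi$ and isometric, since $\langle\pi(x)\xi,\pi(y)\xi\rangle=\langle\pi(y^{\#}x)\xi,\xi\rangle=\phi(y^{\#}\cdot x)=\langle x,y\rangle_\phi$ — here I use that $y^{\#}\cdot x\in\AC_{k,k}$ when $x,y\in\bigoplus_m\AC_{k,m}$, so $\phi$ applies. It has dense range in $H_\xi$ by the definition of $H_\xi$ and the fact that $\pi(\AC)\xi=\pi(\AC)\pi(p_k)\xi=\pi(\AC p_k)\xi$, so it extends to a unitary $U:H_\phi\to H_\xi$, and $U\pi_\phi(a)=\pi(a)U$ for all $a\in\AC$ since both sides agree on the dense set of vectors $[x]$: $U\pi_\phi(a)[x]=U[a\cdot x]=\pi(ax)\xi=\pi(a)\pi(x)\xi=\pi(a)U[x]$. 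Thus $H_\xi\cong H_\phi$ as representations of $\AC$, completing the proof.

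The main obstacle is bookkeeping rather than a deep difficulty: one must be careful throughout that products like $y^{\#}\cdot x$ and $f^{\#}\cdot f$ land back in the corner $\AC_{k,k}$ so that the domain of $\phi$ is respected, and one must invoke Lemma 4.5 correctly to get both the left-ideal property of $N_\phi$ and the boundedness of the action in a single stroke. Everything else is the standard GNS argument transcribed to the setting of an algebra with local units $\{p_m\}$ rather than a global one.
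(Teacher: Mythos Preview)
Your proof is correct and follows precisely the approach the paper outlines in the paragraph preceding the corollary: the ``if'' direction is the trivial check, the ``only if'' is the GNS construction with boundedness supplied by the key Lemma (which is Lemma~4.4 in the paper's numbering, not 4.5), and the uniqueness of the cyclic subrepresentation is the standard GNS intertwiner argument. The paper states the corollary without an explicit proof since it is immediate from that discussion, and your write-up faithfully fills in exactly those details.
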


Continuing the analogy with groups, we notice that Lemma $4.4$ provides us with a bound similar to the $L^{1}$-norm for groups.  Since an arbitrary element in the tube algebra will have its norm bounded by the constant in Lemma $4.4$ in any representation, we can take arbitrary direct sums of representations.  This allows us to define a universal representation, and a corresponding universal $C^{*}$-algebra.

\begin{defi} \ \ 

\begin{enumerate}
\item 
The \textit{universal representation} of the tube algebra is given by $\displaystyle (\pi_{u}, H_{u}):=\bigoplus_{k\in \Irr,\ \phi\in \Phi_{k}}(\pi_{\phi}, H_{\phi})$. 
\item
The \textit{universal norm} on $\AC$ is given by $\| x\|_{u}:=\|\pi_{u}(x)\|$.
\item
The \textit{universal $C^{*}$-algebra} is the completion $C^{*}(\AC):=\overline{\pi_{u}(\AC)}^{{\| \|}}$.  
\end{enumerate}
\end{defi}

Note that non-degenerate $*$-representations of $\AC$ are in 1-1 correspondence with non-degenerate, bounded $*$-representations of $C^{*}(\AC)$.  Note that the universal norm is finite (so that such an infinite direct sum exists), follows from Lemma $4.4$.   We record the consequences of Lemma 4.4 for the universal norm in the following corollary:

\begin{cor} Let $\displaystyle \sum_{j,k,m\in \Irr} x^{j}_{k,m}\in \AC$.  Then $\displaystyle 0< ||x||_{u}\le \sum_{j,m,n\in \Irr} d(X_{j}) \omega(x^{j}_{m,n}\cdot (x^{j}_{m,n})^{\#} )^{\frac{1}{2}}$.
\end{cor}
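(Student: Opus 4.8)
The plan is to deduce this corollary directly from Lemma 4.4, using the triangle inequality for the universal norm together with the fact that each homogeneous component acts boundedly in the universal representation. First I would observe that, by definition of the universal norm and the triangle inequality, it suffices to bound $\|x^{j}_{m,n}\|_{u}$ for each homogeneous component $x^{j}_{m,n}\in \AC^{j}_{m,n}$, since $\|x\|_{u}\le \sum_{j,m,n}\|x^{j}_{m,n}\|_{u}$. So the real content is the estimate $\|w\|_{u}\le d(X_{j})\,\omega(w\cdot w^{\#})^{\frac12}$ for a single $w=x^{j}_{m,n}\in \AC^{j}_{m,n}$.

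To prove that single-component estimate, I would pass to the universal representation $(\pi_{u},H_{u})$, which decomposes over weights as $H_{u}=\bigoplus_{k}H_{u,k}$ with $H_{u,k}=\pi_{u}(p_{k})H_{u}$. Since $w\in \AC^{j}_{m,n}$, the operator $\pi_{u}(w)$ maps $H_{u,m}$ into $H_{u,n}$ and is zero on the orthogonal complement of $H_{u,m}$, so $\|\pi_{u}(w)\| = \sup\{ \|\pi_{u}(w)\eta\| : \eta\in H_{u,m},\ \|\eta\|\le 1\}$. For a unit vector $\eta\in H_{u,m}$, writing it (or approximating it) as a cyclic vector coming from an annular state, Corollary 4.7 lets me identify $\langle \pi_{u}(w^{\#}\cdot w)\eta,\eta\rangle$ with $\phi(w^{\#}\cdot w)$ for the corresponding weight $m$ annular state $\phi$. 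Actually the cleanest route: for any vector $\eta$ in the image of $p_m$, consider the functional $\phi_{\eta}=\langle \pi_{u}(\,\cdot\,)\eta,\eta\rangle$ restricted to $\AC_{m,m}$ — by Corollary 4.7 this lies in $\Phi_{m}$ (after normalizing), and it extends to the larger positive cone on which Lemma 4.4 applies. Applying Lemma 4.4 with the roles set as: take "$x$" to be $p_{m}\in \AC_{m,m}$ (so $x^{\#}\cdot x = p_m$, $\phi(x^{\#}\cdot x)=\|\eta\|^2$) and "$y$" to be $w\in \AC^{j}_{m,n}$, I get $\langle \pi_{u}(w^{\#}\cdot w)\eta,\eta\rangle = \phi_{\eta}(w^{\#}\cdot w)\le d(X_{j})^{2}\,\omega(w\cdot w^{\#})\,\|\eta\|^{2}$. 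Since $\|\pi_{u}(w)\eta\|^{2}=\langle \pi_{u}(w^{\#}\cdot w)\eta,\eta\rangle$, taking the supremum over unit $\eta$ yields $\|w\|_{u}^{2}\le d(X_{j})^{2}\,\omega(w\cdot w^{\#})$, i.e.\ $\|w\|_{u}\le d(X_{j})\,\omega(w\cdot w^{\#})^{\frac12}$.

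Summing over the finitely many nonzero components gives the claimed upper bound $\|x\|_{u}\le \sum_{j,m,n}d(X_{j})\,\omega(x^{j}_{m,n}\cdot (x^{j}_{m,n})^{\#})^{\frac12}$; in particular this is finite, which is what was needed to justify forming the universal direct sum in Definition 4.6. For the strict positivity $\|x\|_{u}>0$ when $x\neq 0$, I would argue that $\pi_{u}$ is faithful: if $\pi_{u}(x)=0$, then in particular $\langle \pi_{\phi}(x^{\#}\cdot x)\hat{p}_{k},\hat{p}_{k}\rangle_{\phi}=0$ for every $k$ and every $\phi\in\Phi_{k}$, where $\hat p_k$ is the class of $p_k$ in $H_\phi$; choosing $\phi$ built from the trace $\omega$ (or $\Omega$), which is positive definite on $\AC$, forces the relevant components of $x$ to vanish, and running over all weights $k$ gives $x=0$. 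The main obstacle I anticipate is bookkeeping: making sure Lemma 4.4 is invoked with the correct substitution of parameters — in particular that "$x\in\AC_{k,m}$" there can legitimately be taken to be $p_m$ with $k=m$ — and correctly handling the fact that an arbitrary unit vector in $H_{u,m}$ need not be cyclic, so that one either restricts to a dense set of such cyclic vectors (using Corollary 4.7) or invokes the direct-sum structure of $\pi_u$ over all annular states to reduce to the cyclic case. None of this is deep, but it requires care to state precisely.
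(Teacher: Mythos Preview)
Your proposal is correct and follows essentially the same approach as the paper: the upper bound is derived from Lemma~4.4 applied to each homogeneous component (with $x=p_{m}$), and strict positivity comes from the fact that $\omega|_{\AC_{k,k}}$ is a weight~$k$ annular state for every $k$ and $\omega$ is positive definite on $\AC$, so the left regular representation is faithful. One small slip: where you invoke ``Corollary~4.7'' to identify vector states with annular states, you mean Corollary~4.5 (the GNS result) --- Corollary~4.7 is the statement you are proving.
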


\begin{proof}
The bound on the right follows from Lemma $4.4$.  The strict positivity of the universal norm follows from the fact that $\omega$ is a positive definite functional on $\AC$ and $\omega |_{\AC_{k,k}}$ is a weight $k$ annular state for all $k\in \Irr$.
\end{proof}

We now turn our attention back to the centralizer algebras $\AC_{k,k}$.  We want to study the representation theory of these unital $*$-algebras, under the restriction that the representations must ``come from'' a tube algebra representation.  The reason for studying these representations is that while we are interested in the whole algebra $\AC$ and its representation theory, often we are able to understand the centralizer algebras and their admissible representations with much greater ease.  The following proposition is an easy corollary of the GNS construction:

\begin{cor} Let $k\in \Irr$, and let $(\pi_{k}, H_{k})$ be a non-degenerate $*$-representation of $\AC_{k,k}$.  The following are equivalent:
\begin{enumerate}
\item
Every vector state in $(\pi_{k}, H_{k})$ is weight $k$ annular state.
\item
$||\pi_{k}(x)||\le || x||_{u}$ for all $x\in \AC_{k,k}$.
\item
$(\pi_{k}, H_{k})$ extends to a continuous representation of the unital $C^{*}$-algebra $p_{k} C^{*}(\AC) p_{k}$.
\item
There exists a representation $(\pi, H)$ of $\AC$ such that $(\pi, H)|_{\AC_{k,k}}$ is unitarily equivalent $(\pi_{k}, H_{k})$.
\end{enumerate}
 \end{cor}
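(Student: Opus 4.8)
The plan is to prove the cycle $(1)\Rightarrow(4)\Rightarrow(3)\Rightarrow(1)$ together with the essentially tautological equivalence $(2)\Leftrightarrow(3)$; everything is bookkeeping on top of the GNS construction preceding Corollary 4.5 and the universal $C^{*}$-algebra of Definition 4.6. For $(2)\Leftrightarrow(3)$, recall that $C^{*}(\AC)=\overline{\pi_{u}(\AC)}$ and that $\pi_{u}$ is faithful (Corollary 4.7), so $p_{k}C^{*}(\AC)p_{k}$ is exactly the completion of $\AC_{k,k}$ in the universal norm. Hence a $*$-representation of the $*$-algebra $\AC_{k,k}$ extends, uniquely by continuity, to a representation of $p_{k}C^{*}(\AC)p_{k}$ precisely when it is bounded with respect to $\|\cdot\|_{u}$, while conversely any $*$-representation of a $C^{*}$-algebra is norm-decreasing.

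For $(3)\Rightarrow(1)$, suppose $\pi_{k}$ extends to $\tilde{\pi}_{k}\colon p_{k}C^{*}(\AC)p_{k}\to B(H_{k})$. Given $f\in\AC_{k,m}$, the element $f^{\#}\cdot f$ lies in $\AC_{k,k}=p_{k}\AC p_{k}$, and inside $C^{*}(\AC)$ we have $\pi_{u}(f^{\#}\cdot f)=\pi_{u}(f)^{*}\pi_{u}(f)\ge 0$; since $f\in\AC_{k,m}=p_{m}\AC p_{k}$, this element is fixed by multiplication on both sides by $\pi_{u}(p_{k})$, hence is a positive element of the corner $p_{k}C^{*}(\AC)p_{k}$. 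As $\tilde{\pi}_{k}$ is a $*$-homomorphism it preserves positivity, so $\pi_{k}(f^{\#}\cdot f)=\tilde{\pi}_{k}(\pi_{u}(f^{\#}\cdot f))\ge 0$ in $B(H_{k})$. Consequently, for any unit vector $\xi\in H_{k}$ the vector state $\phi:=\langle\pi_{k}(\cdot)\xi,\xi\rangle$ satisfies $\phi(p_{k})=\|\xi\|^{2}=1$ and $\phi(f^{\#}\cdot f)\ge 0$ for all $f\in\AC_{k,m}$, $m\in\Irr$, i.e.\ $\phi\in\Phi_{k}$, which is $(1)$. (Composed with $(2)\Rightarrow(3)$, this also yields $(2)\Rightarrow(1)$ directly.)

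For $(1)\Rightarrow(4)$, decompose $H_{k}$ as an orthogonal direct sum of cyclic subspaces $\overline{\pi_{k}(\AC_{k,k})\xi_{\alpha}}$ with unit vectors $\xi_{\alpha}$, via the usual Zorn's-lemma argument using that $\AC_{k,k}$ is unital and $\pi_{k}$ non-degenerate. By $(1)$, each $\phi_{\alpha}:=\langle\pi_{k}(\cdot)\xi_{\alpha},\xi_{\alpha}\rangle$ is a weight $k$ annular state, so Corollary 4.5 produces a non-degenerate representation $(\pi_{\phi_{\alpha}},H_{\phi_{\alpha}})$ of $\AC$. The key observation is that the weight-$k$ subspace $\pi_{\phi_{\alpha}}(p_{k})H_{\phi_{\alpha}}$ of this GNS module is canonically the GNS space of $\phi_{\alpha}$ regarded as a state of the unital $*$-algebra $\AC_{k,k}$: in the explicit model $\hat{H}_{\phi_{\alpha}}=\bigoplus_{m}\AC_{k,m}$, the projection $p_{k}$ annihilates each summand with $m\neq k$ and acts as the identity on the summand $\AC_{k,k}$, and the form $\langle x,y\rangle=\phi_{\alpha}(y^{\#}\cdot x)$ and the left action restrict to the expected ones. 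Hence the restriction of $(\pi_{\phi_{\alpha}},H_{\phi_{\alpha}})$ to $\AC_{k,k}$ on $\pi_{\phi_{\alpha}}(p_{k})H_{\phi_{\alpha}}$ is unitarily equivalent to the cyclic subrepresentation $\overline{\pi_{k}(\AC_{k,k})\xi_{\alpha}}$. Taking $(\pi,H):=\bigoplus_{\alpha}(\pi_{\phi_{\alpha}},H_{\phi_{\alpha}})$, a non-degenerate representation of $\AC$, its restriction to $\AC_{k,k}$ is unitarily equivalent to $\bigoplus_{\alpha}\overline{\pi_{k}(\AC_{k,k})\xi_{\alpha}}=(\pi_{k},H_{k})$, giving $(4)$.

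Finally, $(4)\Rightarrow(3)$: given $(\pi,H)$ restricting to $(\pi_{k},H_{k})$ on $\AC_{k,k}$, it is bounded and extends to $\overline{\pi}\colon C^{*}(\AC)\to B(H)$ by the correspondence recorded after Definition 4.6; restricting $\overline{\pi}$ to $p_{k}C^{*}(\AC)p_{k}$ gives a continuous representation on $\overline{\pi}(p_{k})H=\pi(p_{k})H$ that agrees with $\pi|_{\AC_{k,k}}$ on the dense subalgebra $\AC_{k,k}$, and conjugating by the intertwining unitary produces an extension of $\pi_{k}$. The only step that requires a little care beyond routine $C^{*}$-algebra manipulation is the identification in $(1)\Rightarrow(4)$ of the weight-$k$ part of the $\AC$-GNS module with the $\AC_{k,k}$-GNS module of the same state, which forces one to unwind the explicit construction that precedes Corollary 4.5.
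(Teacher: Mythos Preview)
Your argument is correct, and the equivalences $(2)\Leftrightarrow(3)$, $(3)\Rightarrow(1)$, $(4)\Rightarrow(3)$ are handled just as the paper implicitly does. The substantive difference is in how you pass from a representation of the corner $\AC_{k,k}$ to a representation of the whole tube algebra. The paper proves $(3)\Rightarrow(4)$ by Hilbert $C^{*}$-bimodule induction: for each $m\in\Irr$ it completes $p_{m}\AC p_{k}$ to a right Hilbert $p_{k}C^{*}(\AC)p_{k}$-module, sets $H_{m}:=\overline{p_{m}\AC p_{k}\otimes_{p_{k}C^{*}(\AC)p_{k}}H_{k}}$ with inner product $\langle f\otimes\xi,g\otimes\eta\rangle=\langle\pi_{k}(g^{\#}\cdot f)\xi,\eta\rangle$, and assembles $H=\bigoplus_{m}H_{m}$ into a representation of $\AC$ by left multiplication. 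You instead prove $(1)\Rightarrow(4)$ by first Zorn-decomposing $H_{k}$ into cyclic pieces, applying the state-level GNS construction of Corollary~4.5 to each cyclic vector, and then direct-summing. Your route is more elementary in that it avoids invoking Hilbert $C^{*}$-bimodule machinery and relies only on the GNS construction already developed; the paper's route has the advantage of producing a canonical extension of $(\pi_{k},H_{k})$ in one stroke, independent of any choice of cyclic decomposition, and is the construction one actually wants when comparing different weights or discussing induction later (e.g.\ in Section~5 and Section~6.2). Both are standard and acceptable; your identification of $\pi_{\phi_{\alpha}}(p_{k})H_{\phi_{\alpha}}$ with the $\AC_{k,k}$-GNS space of $\phi_{\alpha}$, and the appeal to uniqueness of cyclic representations with a prescribed vector state, are the right way to close the loop.
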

 
 \begin{proof} $(1)$ implies $(2)$ implies $(3)$ follows from the above discussion.  For $(3)$ implies $(4)$, we construct the representation $(\pi, H)$ in a manner analogous to the $GNS$ construction.  We see that $p_{m}\AC_{k,m}p_{k}$ provides a Hilbert $C^{*}$-bimodule for the corner algebras $p_{m}C^{*}(\AC)p_{m}$ and $p_{k}C^{*}(\AC)p_{k}$ for all $m\in \Irr$ with the obvious left and right inner products.  By standard Hilbert $C^{*}$-bimodule theory, we have an induced representation $(\pi_{m}, H_{m})$ of $p_{m} C^{*}(\AC) p_{m}$, where $H_{m}$ is the Hilbert space completion of $p_{m}\AC p_{k}\otimes H_{k}$ with respect to the induced inner product $\langle f\otimes \xi, g\otimes \eta\rangle_{m}:=\langle \pi_{k}(g^{\#}\cdot f)\xi, \eta \rangle_{k}$.  By bimodule theory, $H:=\bigoplus_{m\in \Irr} H_m$ carries a $*$-representation, $\pi$, of $\AC$. $(4)$ implies $(1)$ follows from the $GNS$ reconstruction result.
 
 \end{proof}
 
\begin{defi} A representation satisfying the equivalent conditions of the previous corollary is called a \textit{weight $k$ admissible representation}.  
\end{defi}

Admissible representations can be seen simply as representations of the centralizer algebras which are restrictions of representations of the whole tube algebra.  Alternatively, they are representations of the corner algebras which induce representations of the whole tube algebra.  Understanding admissible representations for all weights allows us to understand representations of the whole tube algebra.  Since the norm in weight $k$ admissible representations is bounded by the universal norm for $\AC_{k,k}$, one can construct a universal $C^{*}$-algebra completion $C^{*}(\AC_{k,k})$.  From the above proposition, it is clear that $C^{*}(\AC_{k,k})\cong p_{k} C^{*}(\AC) p_k$.  

We remark that proposition $3.5$ implies $C^{*}(\AC)\otimes K\cong C^{*}(\AW)\otimes K$ where $K$ is the $C^{*}$-algebra of compact operators on a seperable Hilbert space.

We end this section with two canonical examples of a non-degenerate $*$-representation of $\AC$ that always exists for all categories.  The first, the so-called left regular representation, is analogous to the left regular representation for groups (though not strictly analogous as we shall see!).  The second, the so-called ``trivial representation'' is rather non-trivial, but serves a similar role to the trivial representation in group theory for approximation and rigidity properties.

\begin{defi}
The \textit{left regular representation} has Hilbert space $L^{2}(\AC, \omega)$, and action $\pi_{\omega}$ given by left multiplication.  
\end{defi}

That the action here is bounded follows from the fact that $\omega|_{\AC_{k,k}}$ is an annular weight $k$ state, hence every vector state in $\pi_{\omega}(p_{k})L^{2}(\AC, \omega)$ is in $\Phi_{k}$.  Applying Lemma 4.4 yields the boundedness.

\medskip

Recall in the previous section that we had a canonical isomorphism $\C[\Irr]\cong \AC_{0,0}$.

\begin{lemm}  The one dimensional representation of $\AC_{0,0}$ defined by the character $1_{\ca}([X])=d(X)$ for all $X\in \Irr$, is a weight $0$ annular state.
\end{lemm}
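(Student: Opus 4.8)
The plan is to verify directly the two defining conditions of a weight $0$ annular state for $1_\ca$, viewed as a functional on $\AC_{0,0}$ through the canonical $*$-isomorphism $\C[\Irr]\cong\AC_{0,0}$. Condition (1) is immediate: $p_0$ corresponds to $[X_0]$, so $1_\ca(p_0)=d(X_0)=1$.

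The first observation is that $1_\ca$ is a \emph{character} of the fusion algebra, i.e.\ a unital $*$-homomorphism $\C[\Irr]\to\C$: multiplicativity is the identity $\sum_Z N^Z_{XY}\,d(Z)=d(X\otimes Y)=d(X)d(Y)$ (additivity and multiplicativity of the statistical dimension), and $*$-compatibility follows from $d(\overline X)=d(X)\in\R$. Consequently, if $f\in\AC_{0,0}$ then $1_\ca(f^\#\cdot f)=\overline{1_\ca(f)}\,1_\ca(f)=|1_\ca(f)|^2\ge 0$, which settles condition (2) when $m=0$.

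It remains to treat $f\in\AC_{0,m}$ with $m\ne 0$, and here I claim $1_\ca(f^\#\cdot f)=0$. Write $f=\sum_b g^b$ with $g^b:=f^b_{0,m}\in Mor(X_b,X_m\otimes X_b)$, and note that for $y\in\AC_{0,0}$ one has $1_\ca(y)=\sum_k Tr_{X_k}(y^k_{0,0})$, since each $y^k_{0,0}$ is a scalar multiple of $1_{X_k}$ and $1_\ca$ sends the generator $1_k\in\AC^k_{0,0}$ to $d(X_k)=Tr_{X_k}(1_{X_k})$. Expanding $(f^\#\cdot f)^k_{0,0}$ by the annular multiplication formula, only the weight $s=m$ term survives (and the $1_0$'s are trivial), giving
$$(f^\#\cdot f)^k_{0,0}=\sum_{a,b\in\Irr}\ \sum_{V\in onb(X_k,\,X_a\otimes X_b)} V^*\bigl(h^a\otimes 1_{X_b}\bigr)\bigl(1_{X_a}\otimes g^b\bigr)V,$$
where $h^a:=(f^\#)^a_{m,0}\in Mor(X_a\otimes X_m,X_a)$. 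Applying $Tr_{X_k}$, summing over $k$, and using cyclicity of the categorical trace together with $\sum_k\sum_{V\in onb(X_k,X_a\otimes X_b)}VV^*=1_{X_a\otimes X_b}$, this collapses to
$$1_\ca(f^\#\cdot f)=\sum_{a,b\in\Irr} Tr_{X_a\otimes X_b}\bigl((h^a\otimes 1_{X_b})(1_{X_a}\otimes g^b)\bigr).$$
Now I take the partial trace over the rightmost $X_b$ strand: since that strand runs untouched through $h^a\otimes 1_{X_b}$, the partial trace equals $h^a\circ(1_{X_a}\otimes c^b)$, where $c^b\in Mor(X_0,X_m)$ is the morphism obtained by closing $g^b$ on its two $X_b$ legs. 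Since $X_m$ is simple and $m\ne 0$, $Mor(X_0,X_m)=0$, so $c^b=0$ and every summand vanishes, giving $1_\ca(f^\#\cdot f)=0$.

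The only genuine work is the $m\ne 0$ computation, which is purely formal: the delicate part is tracking the morphisms through the annular multiplication formula and the definition of $\#$, and recognizing the iterated categorical trace as an ordinary partial trace — tasks the graphical calculus for annular algebras makes routine. Everything then rests on the elementary vanishing $Mor(X_0,X_m)=0$ for $m\ne 0$, which is the categorical shadow of the fact that the one-dimensional representation $1_\ca$ is supported entirely at weight $0$.
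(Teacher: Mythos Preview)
Your proof is correct and follows essentially the same route as the paper's: both split into the cases $m=0$ (handled by the $*$-homomorphism property of $1_\ca$) and $m\ne 0$ (handled by the vanishing $Mor(id,X_m)=0$), and both identify $1_\ca$ with the categorical trace on $\AC_{0,0}$. The only difference is cosmetic: the paper packages the sum over $a,b$ by setting $\alpha=\bigoplus X_j$ and invoking $\Psi^{\overline{\alpha}\alpha}$, then appeals to sphericality of the trace in one line, whereas you expand the annular product explicitly and spell out the partial-trace step that produces $c^b\in Mor(id,X_m)$.
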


\begin{proof} Let $\delta_{\alpha}$ denote the map canonically identifying $Mor(\alpha \otimes id,\ id\otimes \alpha)$ with $Mor(\alpha, \alpha)$ for all objects $\alpha$ for all objects $\alpha$.  Since $\AC^{k}_{0,0}:=Mor(X_{k}\otimes id,\ id\otimes X_{k})$, we  have a map $\displaystyle \delta:=\bigoplus_{k\in \Irr} \delta_{k}: \AC_{0,0}\rightarrow \bigoplus_{k\in \Irr} Mor(X_{k}, X_{k})$. Now we can see $1_{\ca}(x)=Tr(\delta(x))$, where $Tr:=\oplus_{k\in \Irr} Tr_{k}$.   Furthermore, one can check that for $x\in Mor(\alpha \otimes id,\ id\otimes \alpha)$, $1_{\ca}(\Psi^{\alpha}(x))=Tr_{\alpha}(\delta_{\alpha}(x))$.

For $\displaystyle x=\sum_{j\in \Irr} x^{j}_{0,m}\in \AC_{0,m}$, setting $\alpha:=\oplus X_{j}$ where the $j$ appear in the sum for $x$, we have $1_{\ca}(x^{\#}\cdot x)=1_{\ca}(\Psi^{\overline{\alpha}\alpha}(x^{\#}\cdot x))=Tr(\delta_{\overline{\alpha}\alpha}(x^{\#}\cdot x))=0$ for all $m\ne 0$ in $\Irr$ by sphericality of the trace, since $Mor(id,\ X_{m})=\{0\}$ for $m\ne 0$.  Therefore it suffices to check $1_{\ca}(x^{\#}\cdot x)\ge 0$ for $x\in \AC_{0,0}$, which follows since $1_{\ca}$ is a $*$-homomorphism.

\end{proof}

We note that for $k\in \Irr$, $k\ne 0$, $\pi_{1_{\ca}}(p_{k})=0$.  Thus all ``higher weight'' spaces in the trivial representation are $0$, so that in fact $1_{\ca}$ is a character on $\AC$

\begin{defi} The \textit{trivial representation} of $\AC$ is the one dimensional representation $1_{\ca}$ of $\AC$.

\end{defi}

The trivial representation will play a similar role in our representation theory to the trivial representation in the theory of groups.  

\end{section}

\begin{section}{Examples}

We will now analyze the tube algebra and, in particular, the centralizer algebras for two classes of categories: $G$-graded vector spaces for a discrete group $G$ , and the Temperly-Lieb-Jones categories.   In the $G$-$Vec$ case, we see the centralizer algebras are exactly the groups algebras of centralizer subgroups of elements.

\begin{subsection}{$G$-Vec}

Let $G$ be a discrete group and let $\ca$ be the category of $G$-graded vector spaces with trivial associator.  The tube algebra of this example is known, and is one of the earliest examples of a tube algebra, though we were unable to find the first description of it. The tube algebra in this case is essentially the Drinfeld double of the Hopf algebra $\C[G]$, which was one of the motivating examples in the definition of the Drinfeld center.  This example is typically presented in the case of finite groups, while here we consider discrete groups in general.  

Simple objects in $\ca$ are one-dimensional vector spaces indexed by elements of a group, and we identify $\Irr$ with the group $G$.  The tensor product corresponds to group multiplication, and duality corresponds to inverses of group elements.  To be clear, we are actually using a ``strictified'' version of the category, where $X\otimes Y=XY$ for $X,Y\in G$, with equality instead of isomorphism of objects.

 For $X,Y,Z\in G$,  by Frobenius reciprocity $\AC^{Z}_{X,Y}\cong Mor(X, \overline{Z}YZ)$ which is $1$ dimensional if $X=Z^{-1}YZ$ as group elements, and $0$ otherwise. Thus in the tube category language, there is a non-zero hom between $X,Y$ iff $X$ is conjugate to $Y$.  If we set $Conj(G):=\{\text{conjugacy classes of}\ G\}$, then we have a first decomposition $\displaystyle \AC \cong \bigoplus_{\Gamma\in Conj(G)} \A_{\Gamma}$, where $\A_{\Gamma}:=\bigoplus_{X,Y\in \Gamma} \AC_{X,Y}$.

 \ \ Thus it suffices to determine the structure of $\A_{\Gamma}$ for each conjugacy class $\Gamma$.   For $X\in G$, $\AC_{X,X}:=\bigoplus_{Y\in Z_{G}(X)}\AC^{Y}_{X,X}$, where $Z_{G}(X)$ is the centralizer subgroup of $X$ in $G$.  Since each $\AC^{Y}_{X,X}=Mor(YX,XY)$ is non-zero if and only if $XY=YX$, we can identify this space with $Mor(YX, YX)$ which in turn is isomorphic to $\C$.  Thus we have a natural vector space isomorphism $\alpha: \AC_{X,X}\cong \C[Z_{G}(X)]$.  Furthermore, it is easy to check that this is a $*$-algebra isomorphism. More specifically for $Y\in Z_{G}(X)$, we can choose $f^{Y}_{X}\in \AC^{Y}_{X,X}=Mor(YX, YX)$ to be the identity in the later morphism space.  Then we have from the tube algebra multiplication $f^{Y}_{X} \cdot f^{Z}_{X}=f^{YZ}_{X}\in \AC_{X,X}$, and $\#$ corresponds to inverses.  Now, for each $X,Y\in \Gamma$, $Z_{G}(X)\cong Z_{G}(Y)$.  In fact these are conjugate by any group element that conjugates $Y$ to $X$.  The number of possible conjugators from $X$ to $Y$ is $|Z_{G}(X)|$.  It is now easy to see that $\A_{\Gamma}\cong\C[Z_{G(X)}]\otimes B_{0,0}(\ell^{2}(\Gamma))$, where $ B_{0,0}(\ell^{2}(\Gamma))$ is the algebra of finite rank operators on the Hilbert space $\ell^{2}(\Gamma)$.  The diagonal copies of $Z_{G}(X)$ are the  $\AC_{X,X}$, and the matrix unit copies are given by $\AC_{X,Y}$.

\ \ We have the following claim:  Let $X\in \Irr \cong G$, and let $Z_{G}(X)$ be the centralizer subgroup of $X$ in $G$.  Then if $(\pi, H)$ is a unitary representation of $Z_{G}(X)$, then $(\pi, H)$ extends to a representation of $\A_{\Gamma}$, where $\Gamma$ is the conjugacy class of $X$.  To see this we simply note that since $\A_{\Gamma}\cong \C[ Z_{G}(X)]\otimes B_{0,0}(\ell^{2}(\Gamma))$, we can define the Hilbert space $H_{\Gamma}:=H\otimes \ell^{2}(\Gamma)$, with the obvious action.  It is clear that this is a $*$ representation by bounded operators of $\A_{\Gamma}$.  Therefore $$C^{*}_{u}(\AC_{X,X})\cong C^{*}_{u}(Z_{G}(X))$$

\end{subsection}

\begin{subsection}{TLJ Categories}

The Temperley-Lieb-Jones categories $TLJ(\delta)$ for $\delta\ge 2$ are equivalent to the categories $Rep(SU_{-q}(2))$, where $\delta=q+q^{-1}$ for $q$ a positive real number.  They provide a fundamental class of infinite depth rigid $C^{*}$-tensor categories.   They also provide examples of categories that have a nice planar algebra description and a nice categorical description simultaneously.  To describe them, fix a positive real number $\delta\ge 2$.  Then there is a unique $q\in \R$ such that $q+q^{-1}=\delta$.  We can then define for $n\in \N$,  $[n]_{q}=\frac{q^{n}-q^{-n}}{q-q^{-1}}$ if $q\ne 1$, and $[n]_{1}=n$.  

The rigid $C^{*}$-tensor category $TLJ(\delta)$ consists of:
\begin{enumerate}
\item
Self dual simple objects indexed by natural numbers, with $0$ indexing the identity.
\item
$d(k)=[k+1]_{q}$
\item
$k\otimes m\cong (k+m)\oplus (k+m-2)\oplus\dots\oplus |k-m|$
\end{enumerate}

For the rest of this section, we use $[n]$ to denote $[n]_{q}$, assuming $q$ is fixed.  The above properties are merely a summary of some relevant categorical data. These categories have much more structure than this, for example there are complicated $6$-j symbols, and these categories naturally have a braiding (non-unitary unless $q=1$).  These categories also can be realized as the projection categories of particularly nice planar algebras.

Define the unoriented, unshaded planar algebra $TL(\delta)$ as follows: 
\begin{enumerate}
\item
$P_{0}\cong \C$
\item
$ P_{2n+1}=0$
\item
$P_{2n}:=$ Linear span of disks with $2n$ boundary points with strings connecting boundary points
\item
 strings do not cross
\item
 All boundary points are connected to some other boundary point with a string
\item
Closed circles multiply the diagram by a factor of $\delta$

\end{enumerate}

We note that in our generic case $\delta\ge2$, this is a spherical $C^{*}$-planar algebra (see \cite{BHP}, \cite{Jo4} for definitions of spherical $C^{*}$-planar algebras).  We have $dim(P_{2n})=\frac{1}{n+1}\binom{2n}{n}$.  We remark that this is perhaps the most important example of a planar algebra since it appears in some form as a sub-algebra of an arbitrary planar algebra. It is usually presented as a shaded planar algebra in the subfactor context, and there exists many detailed expositions, see \cite{Jo2},\cite{Jo4}.  We can realize the category described above as $TLJ(\delta)=Proj(TL(\delta))$ (see \cite{BHP} for definition of the projection category of a planar algebra).  The object $k$ in $TLJ(\delta)$ corresponds to the $k^{th}$ Jones-Wenzl idempotent in the planar algebra $TL(\delta)$, denoted $f_{k}$.  These projections satisfy the property that applying a cap or cup to the top or bottom of $f_{k}$ results in $0$, called uncapability.  $f_{k}$ is a minimal projection in $TL_{k,k}$ and can be defined by an inductive formula, see \cite{Mor} or \cite{Jo2} for details.  

\ \ The affine annular representations of this planar algebra have been studied in detail by Jones, Jones-Reznikoff, and Reznikoff (see \cite{Jo3}, \cite{Jo4}, and \cite{R} respectively).  We will make use of these results to analyze the universal $C^{*}$-algebra structure on the centralizer algebras of the tube algebra of this category.  The beginning of this section can be deduced in its entirety from the references listed above.  We include these results here for the purpose of self-containment, and due to the slight differences in our setting.  We remark here that we use the categorical picture (Definition 3.2) for $ATL$ to fit with the perspective of Jones and Jones-Reznikoff.

As discussed in section $3$, a planar algebra $\Pl$ naturally provides an annular algebra $\AP$.  For details on this see \cite{Jo3}, \cite{Jo2}, \cite{DGG}.  The affine annular category $ATL$ is easy to describe. The weights will simply be natural numbers, and they will signify the number of strings on the boundaries of disks.  The object in $Proj(TL(\delta))$ corresponding to $k\in \N$ is $1_{k}\in TL_{k,k}$.  Then $ATL_{k,m}$ will consist of all $TL$ diagrams in an annulus with $k$ boundary points on the internal circle and $m$ on the external circle.  This means there are $\frac{k+m}{2}$ non-intersecting strings in the annulus, and each string touches precisely one boundary point (on either the inner or outer disk).  We consider these diagrams only up to affine annular isotopy.  That the set of affine annular pictures described here (isotopy classes of non-intersecting string diagrams) is really a basis for the annular category of the planar algebra follows from the analysis of \cite{DGG} and the fact that $TL(\delta)$ for $\delta\ge 2$ has no local skein relations except removing closed circles.  Composition is the obvious one, and homologicaly trivial circles in the annulus multiply the diagram by a factor of $\delta$.  For more details on this annular category in particular see \cite{Jo3}.

We consider here a subcategory of $Rep(ATL)$ consisting of all locally finite representations.  By this we mean the set of Hilbert representations  of $ATL$, $(\pi, V_{k})$ such that each $V_{k}$ is a finite dimensional Hilbert space, and $\pi: ATL_{k,m}\rightarrow B(V_{k}, V_{m})$ is a $*$-homomorphism.  This category is closed under finite direct sums.  In the literature, Hilbert representations of $A\Pl$ are called Hilbert $\Pl$ modules, and so we use these terms interchangeably in the planar algebra setting.

\begin{defi} A lowest weight $k$ Hilbert $TL$-module is a representation $(\pi, V_{m})$ such that $V_{m}=0$ for all $m< k$.  
\end{defi}

Irreducible representations of $ATL$ are representations which are irreducible as representations of the corresponding annular algebra.  It is straightforward to check that this implies each $V_k$ is irreducible as a representation of $ATL_{k,k}$.  Following the proof in \cite{Jo4}, one can show that every locally finite Hilbert $TL$-module is isomorphic to the direct sum of irreducible lowest weight $k$ modules.  It then becomes our task to classify and construct these.  

\ \ To do so we start by noting that $ATL_{0,0}$ is isomorphic to the fusion algebra $\C[\text{Irr}(TLJ(\delta)]$, which is abelian.  Thus an irreducible lowest weight $0$ module will be a $1$ dimensional representation of the fusion rules.  Let $v_{0}$ be a non-zero vector in the one dimensional space normalized so that $\langle v_{0}, v_{0}\rangle=1$.  We notice the identity object ($f_{0}$) must go to the identity and we may identify $\pi(f_{k})$ with some number (its eigenvalue on $v_{0}$).  But from the fusion rules, all these numbers are determined by $\pi(f_{1})$.  Since $f_{1}$ in $ATL_{0,0}$ is self dual and this must be a $*$-representation, we see that  $\pi(f_{1})$ (hence $\pi(f_{k})$ for all $k$) must be a real number.  Furthermore, by the bounds on the universal norm for the weight $0$ case (Corollary 4.7), we must have $|\pi(f_{1})|\le \delta$.  Let $t:=\pi(f_{1})\in [-\delta, \delta]$.  Then this parameter determines $\pi$ completely.  We still must see which of these extend to Hilbert $TL$-modules, but we will see that all of them will.

\ \ Now, consider $k>0$.  Let $ATL^{<k}_{k,k}$ be the ideal in $ATL_{k,k}$ spanned by diagrams with less than $k$ through strings.  We see that in a lowest weight $k$ representation, this ideal must act by $0$.  An irreducible lowest weight $k$ representation will then neccessarily be an irreducible representation of the algebra $ATL_{k,k}/ATL^{<k}_{k,k}$.  We define the element $\rho_{k}\in ATL_{k,k}$, known informally as ``rotation by one'', with the picture  \grc{rho}.  Here we use the graphical calculus conventions of annular algebras.  We see that this element is invertible in $ATL_{k,k}$, and we call its inverse $\rho^{-1}_{k}=\rho^{*}_{k}$ the ``left rotation by one".  The powers of $\rho_{k}$ form a subgroup of the algebra $ATL_{k,k}$ isomorphic to $\mathbb{Z}$, hence $ATL_{k,k}/ATL_{k,k}^{<k}\cong \C[\mathbb{Z}]$, which is abelian.  Therefore an irreducible lowest weight $k$ $*$-representation will be an irreducible unitary representation of $\mathbb{Z}$, hence determined by some $\omega\in S^{1}$.  

\ \ We have now found all candidates for irreducible lowest weight $m$ representations of $ATL$ for all $m$.  The question that remains is which of these representations of the fusion algebra and $\mathbb{Z}$ extend to a representation of the entire annular category, i.e. have a canonical extension.  Since all the spaces are finite dimensional (as we shall see), the annular actions are bounded, hence it suffices to demonstrate that the inner products of the canonical extension are positive semi-definite.  We follow the method prescribed in Corollary $4.8$, namely if we have a representation of $ATL_{m,m}/ATL_{m,m}^{< m}$ (or $ATL_{0,0}$) representation determined by the parameter $\alpha\in S^{1}$ (or in $[-\delta, \delta]$) on the one dimensional vector space $V^{\alpha}_{m}$, define $\hat{V}^{\alpha}_{n}:=ATL_{m, n}\otimes _{ATL_{m,m}} V^{\alpha}_{m}$.   If we let $s_{\alpha}\in V^{\alpha}_{m}$ be normalized, we can represent simple tensors in the vector space $\hat{V}^{\alpha}_{n}$ by

$f\otimes s_{\alpha}:=\grc{fsalphanew}$

Connecting the bottom $m$ strings to the rotation eigenvector signifies that we are taking a relative tensor product over $ATL_{m,m}$.  Now, we can easily see that  $dim(f_{k}ATL_{m,k}\otimes _{ATL_{m,m}}V^{\alpha}_{m})$ is at most one.  To see this, we note that all the strings emanating from $s_{\alpha}$ must enter the $f_{k}$ consecutively, since apply a cap to $f_{k}$ results in $0$.  The remaining $k-m$ strings coming from $f_k$ that are not attached to $s_{\alpha}$ must be connected to each other somehow, but by uncapability of $f_k$, they must be connected ``around the bottom of the annulus''.  If $m-k$ is even, there is precisely one way to do this, and if $m-k$ is odd this is impossible.  In particular, $f_{k}\hat{V}^{\alpha}_{k}$ is spanned by the vector $g^{\alpha}_{m,k}:=\grc{salphanewer}$.  We note that $g^{\alpha}_{m,k}=0$ for $k<m$.

To understand $\hat{V}^{\alpha}_{n}$, for each Jones-Wenzl idempotent $f_{n-2j}\prec n$, let $(f_{n-2j}, n)$ denote the linear space of planar algebra elements  $x\in P_{n-2j, n}$ such that $xf_{n-2j}=x$, for $0\le j\le \lfloor\frac{n}{2}\rfloor$.    $(f_{n-2j}, n)$ is precisely the space of morphisms in the projection category of $TL$ from $f_{n-2j}$ to $1_{n}$ (see \cite{BHP}).  It is clear that $\hat{V}^{\alpha}_{n}\cong \bigoplus_{0\le j\le \lfloor\frac{n}{2}\rfloor} (f_{n-2j}, n)\otimes g^{\alpha}_{m,n-2j}$.

With this nice decomposition, we want to see how the canonical inner product behaves.  First, we will do some diagrammatics that will allow us to clearly see the canonical inner product is positive semi-definite.  We closely follow the work of \cite{Jo3}.  Let $\alpha$ be the parameter of a lowest weight $m$ representation.  We define the numbers $B^{k}_{m,l}(\alpha)$ by the following:

$\grc{salpha2new}=B^{k}_{m,l}(\alpha)\grc{salpha4new}$.

Note that $B^{k}_{m,k}=1$ for all $k\ge m$.  As a matter of convention, we use $\alpha$ to represent an arbitrary irreducible representation parameter, while we use $t$ to represent a weight parameter (so that $t\in [-\delta, \delta]$) while we use $\omega\in S^{1}$ to represent a weight $>0$ parameter.

\begin{lemm} $[k]^{2}-\left[\frac{k-m}{2}\right]^{2}-\left[\frac{m+k}{2}\right]^{2}=(q^{k}+q^{-k})\left[\frac{k-m}{2}\right]\left[\frac{m+k}{2}\right]$
\end{lemm}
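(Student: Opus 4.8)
The plan is to recognize this as a purely formal identity among the $q$-numbers $[n]=[n]_q$ and to prove it by a two-line manipulation with the standard addition formula for $[n]_q$, after a change of variables that eliminates the separate dependence on $k$ and $m$.

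First I would substitute $a:=\tfrac{k-m}{2}$ and $b:=\tfrac{k+m}{2}$, so that $k=a+b$ and $m=b-a$ (in the situations where the lemma is applied $k$ and $m$ have the same parity, so $a,b\in\N$, but this plays no role in the identity). The assertion becomes
$$[a+b]^{2}-[a]^{2}-[b]^{2}=\bigl(q^{a+b}+q^{-(a+b)}\bigr)\,[a]\,[b],$$
with no remaining reference to $k$ or $m$ individually.

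Next I would record the two ``one-sided'' addition formulas
$$[a+b]=q^{b}[a]+q^{-a}[b]\qquad\text{and}\qquad [a+b]=q^{-b}[a]+q^{a}[b],$$
each of which follows at once from $[n]=\tfrac{q^{n}-q^{-n}}{q-q^{-1}}$ by checking that the numerators telescope (the second is the first with $a$ and $b$ interchanged). Multiplying the two right-hand sides,
$$[a+b]^{2}=\bigl(q^{a+b}+q^{-(a+b)}\bigr)[a][b]+[a]^{2}+[b]^{2},$$
since the cross terms give $q^{a+b}[a][b]+q^{-(a+b)}[a][b]$ and the diagonal terms give $q^{0}[a]^{2}+q^{0}[b]^{2}$. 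Rearranging produces exactly the displayed identity. The degenerate case $q=1$, where $[n]_{1}=n$, is handled separately and trivially: the identity then reads $(a+b)^{2}=a^{2}+2ab+b^{2}$.

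There is no genuinely hard step; the only care needed is the bookkeeping in the product and the $q=1$ edge case. As an alternative to the addition formulas, one may instead clear the common denominator $(q-q^{-1})^{2}$, set $x=q^{a}$ and $y=q^{b}$, and expand both sides as Laurent polynomials in $x,y$, where they are seen to coincide term by term; I would include the addition-formula argument as the primary proof since it is shorter and more conceptual.
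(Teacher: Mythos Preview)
Your proof is correct. The paper's own proof is simply ``Direct computation,'' so your argument is exactly in the same spirit---you have just supplied the details, and the substitution $a=\tfrac{k-m}{2}$, $b=\tfrac{k+m}{2}$ together with the two addition formulas makes the verification clean and transparent.
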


\begin{proof} Direct computation.
\end{proof}

Recall that $TL_{k,k}$ as a vector space is the linear span of all isotopy classes of rectangular diagrams and $k$ non-intersecting strings, with $k$ boundary points on the top and bottom of the rectangle, and each string is attached to exactly two of these boundary points.  Thus $f_{k}\in TL_{k,k}$ can be written as a linear combination of such diagrams. In general it is difficult to compute the coefficient of an arbitrary diagram in $f_{k}$, however there are several types of diagrams which have relatively easy coefficients. First, the coefficient of the identity diagram $1_{k}\in TL_{k,k}$ is one. The coefficient of the diagram $\grc{gnk}$ in $f_{k}$ is $(-1)^{n-k}\frac{[n]}{[k]}$.  For a proof of these formulas we refer the reader to Morrison's paper \cite{Mor}.  We note that the $f_{k}$  are invariant under vertical and horizontal reflection.  This implies diagrams obtained from one another by horizontal or vertical reflection will have the same coefficients in $f_{k}$. With these formulas in hand, we have the following proposition:

\begin{prop}

\ \ 

\begin{enumerate}

\item
For $m>0$, $\omega\in S^{1}$ and $k$ even, $B^{k}_{m,l}(\omega)=\frac{\left[\frac{k-m}{2}\right]\left[\frac{m+k}{2}\right]}{[k][k-1]}\left(q^{k}+q^{-k}-\omega^{2}-\omega^{-2}\right)B^{k-2}_{m,l}(\omega)$

\item
$B^{k}_{0,l}(t)=\frac{1}{[k][k-1]}\left([k]^{2}-t^{2}[\frac{k}{2}]^{2}\right)B^{k-2}_{0,l}$

\item
For $m>0$, $\omega\in S^{1}$ and $k$ odd, we have $B^{k}_{m,l}(\omega)=\frac{\left[\frac{k-m}{2}\right]\left[\frac{m+k}{2}\right]}{[k][k-1]}\left(q^{k}+q^{-k}-(i\omega)^{2}-(i\omega)^{-2}\right)B^{k-2}_{m,l}(\omega)$

\end{enumerate}
\end{prop}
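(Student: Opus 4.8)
The plan is to prove all three recursions at once by a single diagrammatic computation that strips two strands off the Jones--Wenzl idempotent $f_{k}$ occurring in the picture that defines $B^{k}_{m,l}(\alpha)$, thereby reducing the upper index $k$ to $k-2$. The three cases will differ only in how the strands that get closed around the bottom of the annulus engage the rotation eigenvector $s_{\alpha}$, which is why they produce the three different ``rotation terms'' $\omega^{2}+\omega^{-2}$, $t^{2}$, and $(i\omega)^{2}+(i\omega)^{-2}$.

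First I would write out the defining relation for $B^{k}_{m,l}(\alpha)$ as an equality of two closed annular diagrams, in which $f_{k}$ sits adjacent to a strip where strands turn back and wrap around the annulus. I would then resolve $f_{k}$ near that strip, applying the Wenzl recursion twice (equivalently, using the coefficients recalled just above): the identity diagram $1_{k}$ enters with coefficient $1$, and the diagrams carrying a single turnback next to the active strip enter with the coefficients $(-1)^{n-k}[n]/[k]$; by uncappability of $f_{k}$ — and of the neighbouring Jones--Wenzl idempotents coming from $s_{\alpha}$ and the nested caps — every other resolution of $f_{k}$ is killed. The factor $[k][k-1]$ in the denominator of each stated formula is precisely the product of quantum integers that appears in the denominators of the two successive Wenzl recursions.

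Next I would evaluate the surviving terms. The identity-diagram contribution, after isotopy and absorption of the adjacent idempotents, reproduces the diagram defining $B^{k-2}_{m,l}(\alpha)$; removing the closed circles it sweeps out, together with standard quantum-integer identities, accounts for the $[k]^{2}$, $[\tfrac{k-m}{2}]^{2}$ and $[\tfrac{m+k}{2}]^{2}$ terms. Each turnback contribution, when its strand is pushed across the wrapping region, acquires a power of the rotation eigenvalue of $s_{\alpha}$: for $k$ even and $m>0$ the two turnbacks contribute $\omega^{2}$ and $\omega^{-2}$; for $m=0$ the single self-dual through-strand forces the parameter in through $t^{2}$, matching the formal specialisation $\omega+\omega^{-1}=t$ (so $\omega^{2}+\omega^{-2}\mapsto t^{2}-2$, and the two spare $[\tfrac{k}{2}]^{2}$ terms collapse into the clean $[k]^{2}-t^{2}[\tfrac{k}{2}]^{2}$ of part (2)); and for $k$ odd the through-strand makes a half-turn around the annulus, picking up a factor $i$, so $\omega$ is replaced by $i\omega$, which is the sign flip in part (3). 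Assembling everything puts the coefficient in the shape $\tfrac{1}{[k][k-1]}\bigl([k]^{2}-[\tfrac{k-m}{2}]^{2}-[\tfrac{m+k}{2}]^{2}-(\text{rotation term})\,[\tfrac{k-m}{2}][\tfrac{m+k}{2}]\bigr)$, and the quantum-integer identity $[k]^{2}-[\tfrac{k-m}{2}]^{2}-[\tfrac{m+k}{2}]^{2}=(q^{k}+q^{-k})[\tfrac{k-m}{2}][\tfrac{m+k}{2}]$ proved just above lets me factor out $[\tfrac{k-m}{2}][\tfrac{m+k}{2}]$ and read off the three formulas.

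The main obstacle is the diagrammatic bookkeeping: pinning down exactly which turnback diagrams survive uncappability once the annulus has been closed up, and then tracking for each survivor the precise sign, the power of the rotation eigenvalue, and (in the odd case) the factor of $i$ it picks up as strands are isotoped across the wrapping region. Once the case $k$ even, $m>0$ is carried out carefully, the remaining two cases are the same argument specialised to the parity of $k-m$, which governs whether an even or odd number of strands must be closed around the bottom of the annulus.
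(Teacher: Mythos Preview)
Your approach is essentially the paper's: expand the relevant Jones--Wenzl in Temperley--Lieb diagrams (twice---once on $f_{k}$, then on the resulting $f_{k-1}$), kill all but the identity and two turnback terms at each stage by uncappability, track the $\omega^{\pm 1}$ factors the turnbacks pick up from $s_{\alpha}$, and finish with Lemma~5.2. A couple of your intermediate attributions are slightly off---the $[\tfrac{k\pm m}{2}]^{2}$ terms arise as the cross terms in the product of the two turnback expansions (not from closed circles swept out by the identity), and the $i$ in the odd-$k$ case is simply the sign $(-1)^{k}$ absorbed into $\omega$ (not a geometric half-turn)---but these straighten themselves out once you actually carry the computation through.
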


\begin{proof}

First assume $m>0$.  Then we have $\grc{salpha2new}=\grc{d1new}$

By uncapability of $f_k$, we see that there are precisely $3$ diagrams that can be inserted into the bottom $f_{k}$.  The identity diagram $1_k$, with no cups or caps on either the top or bottom, is the first.  There can only be one cup in the top, which must be on the top right. Such a diagram must have exactly one cap on the bottom, and it can be either at position $\frac{k-m}{2}$ or $\frac{k+m}{2}$.  As mentioned above, the coefficient of such a diagram is $(-1)^{\frac{k+m}{2}}\frac{[\frac{k-m}{2}]}{[k]}$ for the former and $(-1)^{\frac{k-m}{2}}\frac{[\frac{k+m}{2}]}{[k]}$ for the latter.  We see then pick up a value of $\omega^{-1}$ for the first diagram, and an $\omega$ for the second diagram.  Then the above is equal to 

$$(-1)^{\frac{k-m}{2}}\left(\frac{ (-1)^{m}[\frac{k-m}{2}]\omega^{-1}+[\frac{k+m}{2}]\omega}{[k]}\right)\grc{d2new}+\frac{[k]}{[k-1]}\grc{d4new}$$

The diagram on the right and its coefficient is obtained from plugging in $1_{k}$ in for $f_{k}$.  We see that applying annular relations introduces a copy of $f_{k-1}$, where the top left most string is attached to the bottom left most string around the left side.  This is nothing other than the left trace preserving conditional expectation $E_{L}:TL_{k-1, k-1}\rightarrow TL_{k-2, k-2}$ applied to $f_{k-1}$.  Since $E_{L}(f_{k-1})$ is uncapable on both the top and bottom, we have $E_{L}(f_{k-1})=c f_{k-2}$ for some scalar $c$.  Taking the trace on both sides givese us $Tr(f_{k-1})=cTr(f_{k-2})$, hence $c=\frac{[k]}{[k-1]}$. 

We want an expression just involving the diagram to the right in the sum, so we consider the left diagram in the sum and apply the same sort of argument.  We notice the diagram on the left is in fact equal to $\grc{d5new}$.  Now here we see that the identity gives $0$.  There is only one possible place for a cup, and that is on the top left.  A careful consideration of caps and through strings shows a cap at the bottom right gives $0$.  There are precisely $2$ places for a bottom cap that give non-zero contributions, namely with positions at $\frac{k-m}{2}-1$ and  $\frac{k+m}{2}-1$.  The coefficients in $f_{k-1}$ of these in diagrams are $(-1)^{\frac{m-k}{2}+1}\frac{[\frac{k+m}{2}]}{[k-1]}$ and $(-1)^{\frac{m+k}{2}+1}\frac{[\frac{k-m}{2}]}{[k-1]}$ respectively.  Again the first coefficient picks up an $\omega^{-1}$ and the second picks up an $\omega$.  Thus this diagram is equal to 

$$-(-1)^{\frac{m-k}{2}}\left(\frac{ [\frac{k+m}{2}]\omega^{-1}+(-1)^{k}[\frac{k-m}{2}]\omega}{[k-1]}\right)\grc{d4new}$$

Putting everything together we end up with $$\frac{1}{[k][k-1]}\left( [k]^{2}- \left[\frac{k-m}{2}\right]^{2}-\left[\frac{k+m}{2}\right]^{2}- (-1)^{k}(\omega^{2}+\omega^{-2})\left[\frac{k-m}{2}\right]\left[\frac{k+m}{2}\right]\right)\grc{d4new}$$

 By the quantum number identity of Lemma $5.2$, $[k]^{2}-\left[\frac{k-m}{2}\right]^{2}-\left[\frac{m+k}{2}\right]^{2}=(q^{k}+q^{-k})\left[\frac{k-m}{2}\right]\left[\frac{m+k}{2}\right]$, so the above coefficient is $$\frac{\left[\frac{k-m}{2}\right]\left[\frac{m+k}{2}\right]}{[k][k-1]}\left(q^{k}+q^{-k}-(-1)^{k}(\omega^{2}+\omega^{-2})\right).$$  We immediately see the desired formulas for $k$ even and $k$ odd (for $k$ odd the i comes from the $(-1)^{k}=-1$, which we then bring inside the $(\omega^{2})$ as an $i$).

Now for $m=0$, we must have that $k, l$ are even.  We perform the same analysis:

$$\grc{d6}=\grc{d7}.$$

Evaluating the bottom $f_{k}$ with Temperley-Lieb diagrams, we see the identity $1_{k}$ yields $\frac{[k]}{[k-1]}\grc{d8}.$  Now there is only one possible non-zero cap location in the top (in the top right), and one possible cap on the bottom, at position $\frac{k}{2}$.  This diagram has coefficient $(-1)^{\frac{k}{2}}\frac{[\frac{k}{2}]}{[k]}$.  The cap at the bottom yields a factor of $t$ since it produces a homologically non-trivial circle around $s_{t}$, resulting in

$$(-1)^{\frac{k}{2}}t\frac{[\frac{k}{2}]}{[k]}\grc{d9}.$$

As in the case $m>0$, the identity $1_{k-1}$ yields $0$ at this step, and thus there is precisely one diagram which gives a non-zero contribution, with a cup in the upper left hand corner, and a cap on the bottom at position $\frac{k}{2}-1$.  The coefficient of this diagram in $f_{k-1}$ is $(-1)^{\frac{k}{2}-1}\frac{[\frac{k}{2}]}{[k-1]}$. Again a factor of $t$ pops out. Combining all the terms, we end up with our original expression equal to

$$\frac{1}{[k][k-1]}\left([k]^{2}-t^{2}\left[\frac{k}{2}\right]^{2}\right) \grc{d8}$$

This gives us the desired formula. 
\end{proof}

\medskip

\ \ As a corollary of Proposition 5.3, we can analyze the inner products on the spaces $\hat{V}^{\alpha}_{n}$ following Jones and Reznikoff \cite{Jo3}.  Let $\alpha$ be the parameter of a lowest weight $m$ representation.  $\hat{V}^{\alpha}_{n}\cong \bigoplus_{0\le j\le \lfloor\frac{n}{2}\rfloor} (f_{n-2j}, n)\otimes g^{\alpha}_{m,n-2j}$.  We see that this decomposition is orthogonal with respect to the sesquilinear form defined by our lowest weight $m$ representation.  If $x\otimes g^{\alpha}_{m, n-2j}, y\otimes g^{\alpha}_{m,n-2j}\in  (f_{n-2j}, n)\otimes g^{\alpha}_{m,n-2j}$, we see that $\langle x\otimes g^{\alpha}_{m, n-2j}, y\otimes g^{\alpha}_{m,n-2j}\rangle_{\alpha}=\langle x, y\rangle \langle g^{\alpha}_{m, n-2j},g^{\alpha}_{m, n-2j}\rangle_{\alpha}=\langle x,y\rangle B^{n-2j}_{m,m}(\alpha)$, where $\langle x,y\rangle$ denotes the positive definite inner product in the planar algebra.  An inspection of the formulas shows that $B^{n-2j}_{m,m}(\alpha)\ge 0$.  Thus our inner product is positive semidefinite, hence, taking the quotient by the kernel of our form, we obtain a sequence of finite dimensional Hilbert spaces $\{V^{\alpha}_{k}\}$ with $V^{\alpha}_{k}=0$ for $k<m$ (and 0 if the parity of $k$ is distinct from the parity of $m$).  We notice also that this inner product is uniquely determined by $\alpha$, thus for a given lowest weight $k$ and parameter $\alpha$, there is a unique Hilbert $TL$-module constructed as above.

\ \ In some cases, however, even with $k\ge m$, it may be that $g^{\alpha}_{m,k}=0$ in the quotient with respect to the positive semi-definite inner product.  This happens precisely when $B^{k}_{m,m}(\alpha)=0$.   Inspecting the coefficients as in \cite{Jo3}, we can determine when this happens.  For the weight $0$ case, we see that this happens precisely when $k>0$ and $t=\pm \delta$.  For $\delta>2$, all other $B^{k}_{m,m}(\alpha)$ are strictly positive for all $m$, $k\ge m$ and $\alpha$ .  When $\delta=2$ and hence $q=1$, the weight $0$ story is the same, but for higher weights we see that we run in to a problem in two places:  For $m$ even, $\omega=\pm 1$, $B^{k}_{m,m}(\pm 1)=0$ for all $k>m$.  For $m$ odd, we see that the problem occurs at $\omega=\pm i$, and $B^{k}_{m,m}(\pm i)=0$ for all $k>m$.  This will be relevant when we analyze the tube algebra representations of $TLJ(\delta)$, so we record the results in the following proposition.

\begin{prop} \cite{Jo3}, \cite{R}:  Irreducible lowest weight $m$ representations are classified as follows:  For a lowest weight $m$ representation with parameter $\alpha$, let $g^{\alpha}_{m,k}$ be the vector described above.  Recall that $g^{\alpha}_{m,k}=0$ if $k<m$.
\begin{enumerate}
\item
For $t\in [-\delta, \delta]$, there exists a unique irreducible lowest weight $0$ Hilbert $TL$-module $V^{t,0}:=\{V^{t}_{k}\ :\ k\ \text{is even}\}$.  For $t\in (-\delta, \delta)$, $g^{t}_{0,k}\ne 0$ for all even $k$.  $g^{\pm \delta}_{0,k}=0$ for all $k>0$.
\item
For $m>0$, $\omega \in S^{1}$, there exists a unique irreducible lowest weight $m$ Hilbert $TL$-module $V^{\omega, m}:=\{ V^{\omega, m}_{k}\ :\ m-k\ \text{is even} \}$.  For $\delta>2$, $g^{\omega}_{m,k}\ne 0$ for all $k\ge m$ with $k-m$ even.  For $\delta=2$, $k$ even, $g^{\pm1}_{m,m}=1$ and $g^{\pm 1}_{m,k}=0$ for all $k> m$.  For $\omega\ne \pm 1$, $g^{\omega}_{m,k}\ne 0$ for all even $k\ge m$.  If $m$ is odd, then $g^{\pm i}_{m,m}=1$ and $g^{\pm i}_{m,k}=0$ for all $k> m$, and for $\omega\ne \pm i$, $g^{\omega}_{m,k}\ne 0$ for all odd $k\ge m$. 
\item
Define the space $X^{+}_{\infty}:= [-\delta, \delta] \sqcup S_{1}\sqcup S_{1}\sqcup \dots$, with infinitely many copies of $S_{1}$, and $X^{-}_{\infty}:=S^{1}\sqcup S^{1}\sqcup \dots$.  Then irreducible representations in $Rep(ATL)$ are parameterized (as a set) by $X^{+}_{\infty}\sqcup X^{-}_{\infty}$.
\end{enumerate}
\end{prop}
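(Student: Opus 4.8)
The statement collects what the discussion preceding it has set up, so the proof is essentially an assembly, following Jones--Reznikoff \cite{Jo3} and Reznikoff \cite{R}. The plan is as follows. Recall from the paragraph before Proposition 5.3 that a lowest weight $m$ irreducible representation restricts on its bottom space to an irreducible $*$-representation of the abelian algebra $ATL_{0,0}\cong\C[\mathrm{Irr}(TLJ(\delta))]$ when $m=0$, and of $ATL_{m,m}/ATL^{<m}_{m,m}\cong\C[\mathbb{Z}]$ when $m>0$; thus the candidate parameters are $t=\pi(f_{1})\in[-\delta,\delta]$ (the bound being Corollary 4.7) for $m=0$, and the eigenvalue $\omega\in S^{1}$ of the unitary $\pi(\rho_{m})$ for $m>0$. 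First I would show each candidate $\alpha$ extends to a Hilbert $TL$-module: following Corollary 4.8, form $\hat V^{\alpha}_{n}:=ATL_{m,n}\otimes_{ATL_{m,m}}V^{\alpha}_{m}$ with its canonical sesquilinear form, check the form is positive semi-definite, and take $V^{\alpha,m}_{n}$ to be the Hilbert completion of the quotient by its kernel. Via the orthogonal decomposition $\hat V^{\alpha}_{n}\cong\bigoplus_{0\le j\le\lfloor n/2\rfloor}(f_{n-2j},n)\otimes g^{\alpha}_{m,n-2j}$ together with the identity $\langle x\otimes g^{\alpha}_{m,n-2j},\,y\otimes g^{\alpha}_{m,n-2j}\rangle_{\alpha}=\langle x,y\rangle\,B^{n-2j}_{m,m}(\alpha)$ (with $\langle x,y\rangle$ the positive-definite planar-algebra inner product), positivity reduces to the inequalities $B^{k}_{m,m}(\alpha)\ge 0$ for every $k\ge m$ of the parity of $m$.

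The heart of the matter is then an induction on $k$ using Proposition 5.3, starting from $B^{m}_{m,m}(\alpha)=1$. The recursion multiplies $B^{k-2}_{m,m}$ by $\frac{[(k-m)/2][(k+m)/2]}{[k][k-1]}$, a ratio of quantum integers which is strictly positive for $\delta\ge 2$, times a scalar which is $q^{k}+q^{-k}-\omega^{2}-\omega^{-2}$ for $m$ (hence $k$) even, $q^{k}+q^{-k}-(i\omega)^{2}-(i\omega)^{-2}$ for $m$ odd, or (for the $m=0$ recursion $B^{k}_{0,l}(t)=\frac{1}{[k][k-1]}([k]^{2}-t^{2}[k/2]^{2})B^{k-2}_{0,l}$) the quantity $[k]^{2}-t^{2}[k/2]^{2}$. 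Since $q^{k}+q^{-k}\ge 2$ with equality iff $q=1$, while $|\omega^{2}+\omega^{-2}|\le 2$, the first two scalars are $\ge 0$; and $[k]=[k/2](q^{k/2}+q^{-k/2})$ gives $[k]^{2}-t^{2}[k/2]^{2}=[k/2]^{2}\big((q^{k/2}+q^{-k/2})^{2}-t^{2}\big)\ge[k/2]^{2}(\delta^{2}-t^{2})\ge 0$ for $t\in[-\delta,\delta]$, using monotonicity of $s\mapsto q^{s}+q^{-s}$. This yields positivity, hence the existence of $V^{\alpha,m}$, and uniqueness up to unitary equivalence is automatic since the form --- so also the module --- depends only on $\alpha$. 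Tracking exactly when the scalar factor vanishes gives the asserted vanishing locus of $g^{\alpha}_{m,k}$: for $m=0$ the factor $[2]^{2}-t^{2}$ at the first step vanishes precisely at $t=\pm\delta$, so $g^{\pm\delta}_{0,k}=0$ for $k>0$ and $g^{t}_{0,k}\ne 0$ on $(-\delta,\delta)$; for $m>0$ and $\delta>2$ one has $q^{k}+q^{-k}>2\ge|\omega^{2}+\omega^{-2}|$ strictly, so $g^{\omega}_{m,k}\ne 0$ for all $k\ge m$; and for $m>0$ and $\delta=2$ the factor vanishes already at $k=m+2$ exactly at $\omega=\pm 1$ when $m$ is even and at $\omega=\pm i$ when $m$ is odd, giving $g^{\omega}_{m,k}=0$ for all $k>m$ there and $g^{\omega}_{m,k}\ne 0$ otherwise. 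Irreducibility of $V^{\alpha,m}$ is then clear: its weight-$m$ space is one-dimensional and cyclic, so a nonzero submodule has a minimal-weight vector killed by all cap morphisms, hence a multiple of some nonzero $g^{\alpha}_{m,k}$, and applying a suitable morphism in $ATL_{k,m}$ returns a nonzero weight-$m$ vector (the resulting self-pairing is a nonzero multiple of $B^{k}_{m,m}(\alpha)$), forcing the submodule to be everything.

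Finally, for part (3): by the decomposition result recorded above (following \cite{Jo4}), every locally finite Hilbert $TL$-module is a direct sum of lowest weight irreducibles, and any lowest weight $m$ irreducible is cyclically generated by its bottom space, hence unitarily equal to the $V^{\alpha,m}$ built from its bottom parameter $\alpha$; distinct pairs $(m,\alpha)$ give non-isomorphic modules since $m$ is the least nonzero weight and $\alpha$ is read off from the action of $f_{1}$, resp.\ $\rho_{m}$, on the one-dimensional bottom space. As $V^{\alpha,m}_{k}=0$ unless $k\equiv m\pmod 2$, the irreducibles split by the parity of $m$: the even-weight family consists of the weight-$0$ modules ($t\in[-\delta,\delta]$) together with the weight $2,4,6,\dots$ modules (a copy of $S^{1}$ each), i.e.\ $X^{+}_{\infty}=[-\delta,\delta]\sqcup S^{1}\sqcup S^{1}\sqcup\cdots$, and the odd-weight family consists of the weight $1,3,5,\dots$ modules, i.e.\ $X^{-}_{\infty}=S^{1}\sqcup S^{1}\sqcup\cdots$; hence the irreducibles of $Rep(ATL)$ are in bijection with $X^{+}_{\infty}\sqcup X^{-}_{\infty}$. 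I expect the only real obstacle to be the bookkeeping of the middle paragraph --- keeping the parities of $k$ straight, separating the regimes $\delta>2$ and $\delta=2$, and isolating which scalar factor can vanish; the extra $\delta=2$ degeneracies reflect, as noted in the introduction, the unitarity of the standard braidings on $TLJ(2)$, and everything else is a formal consequence of Corollary 4.8 and Proposition 5.3.
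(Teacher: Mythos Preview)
Your proof is correct and follows exactly the approach the paper uses: the paper's argument (given in the two paragraphs immediately preceding the proposition) reduces everything to the orthogonal decomposition of $\hat V^{\alpha}_{n}$ and the recursion of Proposition~5.3, then says ``an inspection of the formulas shows that $B^{n-2j}_{m,m}(\alpha)\ge 0$'' and ``inspecting the coefficients as in \cite{Jo3}'' for the vanishing locus, while you actually carry out that inspection. Your write-up is in fact more detailed than the paper's own discussion (which defers irreducibility and the fine case analysis to \cite{Jo3}, \cite{R}); the only place to tighten is the irreducibility sketch, where the claim that a cap-annihilated minimal-weight vector must lie in $\C\,g^{\alpha}_{m,k}$ deserves one extra line (e.g.\ apply $f_{k}$ and use uncappability), but this is no less than what the paper provides.
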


We thank Makoto Yamashita for pointing out that the parametrization $(3)$ coincides with the parameterization of irreducible representations of the quantum Lorentz group $SL_{q}(2, \C)$, the Drinfeld double of $SU_{q}(2)$, determined by Pusz in \cite{Pu}.  However, as pointed out by the reviewer, Pusz only considers $q>0$, and we should expect $(3)$ to parameterize irreducible representations of $SL_{-q}(2, \C)$. 
 
We proceed to analyze the corners of the tube algebra of $TLJ(\delta)$.  We will denote the tube algebra for $TLJ(\delta)$ by $\A$.  Since the simple objects in our category are indexed by $k$, (namely, the $k^{th}$ Jones-Wenzl idempotent $f_k$), we let $k$ denote the equivalence class of $f_k$ as opposed to the identity in $TL_{k,k}$ from the planar algebra. To study the tube algebra we construct a nice basis for $\A_{k,k}$ which will allow us to exploit the planar algebra description of this category.  From the proof of Proposition 3.5, we see that $\A_{k,k}\cong f_{k}ATL_{k,k}f_{k}$.  In other words $\A_{k,k}$ is the cut down of the affine Temperley-Lieb $ATL_{k,k}$ space by the rectangular $k^{th}$ Jones-Wenzl projection $f_{k}$.  Thus we can construct a basis of $\A_{k,k}$ which consist of diagrams as follows:

For $k$ even and $j\in \mathbb{N}$, set $x^{k}_{0,j}:=\grc{xj}$.  For $n\in \mathbb{Z}$ and $0<m\le k$ with $k-m$ even, define $x^{k}_{m,n}:=\grc{xnmnew}$.  Again, these pictures can and should be interpreted as representing annular tangles, with the strings on the left connecting to strings on the right around the bottom of an annulus.  In the center of the diagram $x^{k}_{m,n}$ is the $n^{th}$ power of the rotation $\rho_{m}$.  We define the rank of the diagram as $Rank(x^{k}_{m,n})=m$.  We see that the rank of a diagram in $\A_{k,k}$ must be the same parity as $k$. The rank corresponds to the number of strings starting from the bottom $f_{k}$ and going all the way to the top $f_{k}$. 

\begin{prop} Let $B:=\{x^{k}_{m,n}\ :\ m\in \mathbb{N},\ 0\le m\le k,\ k-m=0\ mod\ 2,\ n\in \mathbb{Z}\ \text{or}\ n\in \mathbb{N}\ \text{for}\ m=0\}$.  Then $B$ is a basis for $\A_{k,k}$.
\end{prop}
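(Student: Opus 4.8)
The plan is to use the identification $\A_{k,k}\cong f_{k}\,ATL_{k,k}\,f_{k}$ coming from the proof of Proposition 3.5, and to produce the claimed basis inside the cut-down algebra. Since for $\delta\ge 2$ the Temperley--Lieb planar algebra $TL(\delta)$ has no skein relations beyond removing contractible circles, the affine annular $TL$-diagrams (isotopy classes of systems of non-crossing strings in the annulus with $k$ marked points on each boundary circle) form a basis of $ATL_{k,k}$, so $f_{k}\,ATL_{k,k}\,f_{k}$ is spanned by the elements $f_{k}Df_{k}$ as $D$ ranges over these diagrams. Thus the proof splits into a spanning statement --- every $f_{k}Df_{k}$ is either $0$ or a nonzero scalar multiple of some $x^{k}_{m,n}\in B$ --- and a linear independence statement for the $x^{k}_{m,n}$.

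For spanning, the engine is uncapability of the Jones--Wenzl idempotent $f_{k}$. First, if $D$ joins two adjacent marked points on the inner or outer circle by a contractible arc, that arc caps $f_{k}$ and $f_{k}Df_{k}=0$, so we may assume there are none. Write $m$ for the number of through-strings of $D$ (so $k-m$ is even) and run a planarity argument in the annulus: the through-strings must occupy a block of $m$ consecutive marked points on each circle; the remaining $k-m$ inner marked points cannot cap off locally, so they are joined in nested pairs by arcs winding around the hole, which forces exactly $\frac{k-m}{2}$ on each side of the block, and likewise on the outer circle; and the only surviving freedom is the winding of the block around the hole, recorded by $\rho_{m}^{\,n}$ with $n\in\mathbb{Z}$ for $m>0$, and by the family $x^{k}_{0,j}$, $j\in\N$, for $m=0$ (which forces $k$ even). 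Contractible circles produced along the way contribute powers of $\delta$, and the caps that have become adjacent to the $f_{k}$'s are collapsed via $E_{L}(f_{k-1})=\frac{[k]}{[k-1]}f_{k-2}$ and its reflections --- exactly the manipulation used in the proof of Proposition 5.3 --- each such collapse contributing a nonzero numerical factor. Hence $f_{k}Df_{k}$ is $0$ or a nonzero multiple of the relevant $x^{k}_{m,n}$, and $B$ spans $\A_{k,k}$.

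For linear independence I would use the classification of irreducible affine annular representations in Proposition 5.5 together with the through-string filtration. Cutting the annulus along the central circle carrying $\rho_{m}^{\,n}$ gives a factorization $x^{k}_{m,n}=U_{m}\,\rho_{m}^{\,n}\,W_{m}$ through weight $m$, with $W_{m}\in ATL_{k,m}$ and $U_{m}\in ATL_{m,k}$; when $n=0$ the diagram is reflection-symmetric, so one checks $x^{k}_{m,0}=W_{m}^{\#}W_{m}$, which is positive. Given a finite relation $\sum_{m,n}a_{m,n}x^{k}_{m,n}=0$, let $m_{0}$ be the largest $m$ with some $a_{m_{0},n}\ne 0$ and evaluate on the one-dimensional space $f_{k}V^{\omega}_{k}=\C\,g^{\omega}_{m_{0},k}$ of a lowest weight $m_{0}$ irreducible Hilbert $TL$-module with parameter $\omega$, chosen among the infinitely many values for which $g^{\omega}_{m_{0},k}\ne 0$ (by Proposition 5.5 this is all of $S^{1}$ when $\delta>2$, all of $S^{1}$ except $\pm1$ or $\pm i$ when $\delta=2$, and for $m_{0}=0$ one uses $\omega=t$ ranging over $(-\delta,\delta)$ instead). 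Every $x^{k}_{m,n}$ with $m<m_{0}$ annihilates this space since it factors through weight $m<m_{0}$, which is $\{0\}$ in a lowest weight $m_{0}$ module, while $x^{k}_{m_{0},n}$ acts by $\omega^{\,n}$ times the scalar by which $x^{k}_{m_{0},0}=W_{m_{0}}^{\#}W_{m_{0}}$ acts; positivity of that scalar is automatic, and strict positivity holds precisely because $g^{\omega}_{m_{0},k}\ne 0$, equivalently $B^{k}_{m_{0},m_{0}}(\omega)\ne 0$. Hence $\sum_{n}a_{m_{0},n}\,\omega^{\,n}=0$ for infinitely many $\omega$ on the circle, forcing every $a_{m_{0},n}=0$, a contradiction. (For $m_{0}=0$ one instead uses that the $x^{k}_{0,j}$ act on $f_{k}V^{t}_{k}$ by pairwise linearly independent functions of $t\in(-\delta,\delta)$, reflecting the fact that the rank-$0$ part of $\A_{k,k}$ maps onto the fusion algebra $ATL_{0,0}\cong\C[\Irr]$.) Therefore $B$ is linearly independent, and together with the spanning statement it is a basis of $\A_{k,k}$.

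The main obstacle is the spanning step: turning uncapability plus planarity in the annulus into the exact normal form --- in particular forcing the non-through strings to wind around the hole and to split evenly into $\frac{k-m}{2}$ on each side, so that the only remaining parameter is the rotation --- and verifying that the numerical factors produced by the Jones--Wenzl collapses are never zero, so that no basis vector $x^{k}_{m,n}$ is inadvertently represented by $0$. Once Proposition 5.5 is available, the independence step is routine.
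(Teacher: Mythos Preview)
Your spanning argument is essentially the paper's, though you add an unnecessary layer: once a diagram $D$ has no boundary caps, it is already (up to powers of $\delta$ from contractible circles) one of the bare diagrams $\hat{x}^{k}_{m,n}$, so $f_{k}Df_{k}$ is literally $x^{k}_{m,n}$ times a power of $\delta$; no $E_{L}$-collapse is needed at this stage.

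Your linear independence argument is correct but takes a genuinely different route from the paper. The paper gives an elementary ``leading term'' argument: expand each Jones--Wenzl $f_{k}$ as a sum of Temperley--Lieb diagrams, observe that the identity $1_{k}$ occurs with coefficient $1$, and note that every other term introduces a rectangular boundary cap. Hence in the expansion of $x^{k}_{m,n}$ in the diagram basis of $ATL_{k,k}$, the unique cap-free term is $\hat{x}^{k}_{m,n}$ (the same picture with $1_{k}$ in place of $f_{k}$), appearing with coefficient $1$. Since the $\hat{x}^{k}_{m,n}$ are distinct isotopy classes and cap-free diagrams are linearly independent from diagrams with boundary caps, a relation $\sum\lambda_{i}x^{k}_{m_{i},n_{i}}=0$ forces $\sum\lambda_{i}\hat{x}^{k}_{m_{i},n_{i}}=0$ and hence all $\lambda_{i}=0$. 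This is entirely self-contained and uses nothing beyond the diagram basis of $ATL_{k,k}$. Your approach instead invokes the Jones--Reznikoff classification (what the paper labels Proposition~5.4, not 5.5) and the eigenvalue computations $x^{k}_{m_{0},n}\,g^{\omega}_{m_{0},k}=B^{k}_{m_{0},m_{0}}(\omega)\,\omega^{n}\,g^{\omega}_{m_{0},k}$; there is no circularity since those facts do not rely on the basis statement, but you are importing substantially more machinery than necessary. The paper's argument is shorter and more elementary; yours has the virtue of making transparent \emph{why} the rank filtration and rotation eigenvalue together separate the $x^{k}_{m,n}$, which is exactly what is exploited later in identifying the character spaces $X_{k}$ and $Y_{k}$.
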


\begin{proof} Since $\A_{k,k}\cong f_{k}ATL_{k,k}f_{k}$, we see that the only diagrams that are not zero are in $B$, hence $B$ is a spanning set.  To see that these are linearly independent, we note that the diagrams listed above without the $f_{k}$ ( replacing each $f_{k}$ by $1_{k}\in ATL_{k,k}$) are linearly independent in $ATL_{k,k}$, since they correspond to distinct isotopy classes of diagrams.   We also note that these diagrams have no rectangular caps on their boundaries, which means that any cap on the top or bottom has to go ``around the bottom of the annulus''.   We have a bijective correspondence between $B$ and $ATL_{k,k}$ diagrams with no rectangular caps on the top and bottom boundaries, given by replacing the Jones-Wenzl idempotents in $x^{k}_{n,m}$ with the $1_{k}\in TL_{k,k}$.  We also note that by definition, the diagrams in $ATL_{k,k}$ with no rectangular caps on their boundaries must be linearly independent from the set of diagrams with some rectangular caps on their boundaries.  Suppose there exists some $\{b_{i}\}_{1\le i\le n} \subseteq B$ and $\lambda_{i}\in \C$ such that $\sum_{i}\lambda_{i}b_{i}=0$.  Let $\hat{b}_{i}\in ATL_{k,k}$ be the diagram obtained by replacing the top and bottom Jones-Wenzl idempotents in $b_{i}$ with the identity. Then evaluating the Jones-Wenzl idempotents at the top and the bottom of the diagrams in terms of $TL$ diagrams, we see that the only terms in both the top and bottom Jones-Wenzls that give no rectangular caps on the boundary are the identity diagrams $1_{k}\in TL$, and these have coefficient $1$ in $f_{k}$.  Since these diagrams are independent from the diagrams with caps, we notice that our equation implies $\sum_{i}\lambda_{i}\hat{b}_{i}=0$.  But our correspondence is bijective, and these are independent in $ATL_{k,k}$, hence there is no such collection of $\lambda_{i}$.
\end{proof}

\begin{prop} For every $k$, $\A_{k,k}$ is abelian.
\end{prop}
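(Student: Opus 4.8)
The plan is to reduce to the basis of the preceding proposition and to exhibit a \emph{linear} anti-automorphism of $\A_{k,k}$ that fixes every basis vector $x^k_{m,n}$; since a linear map fixing a basis is the identity, this forces the algebra to be commutative.

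First I would construct the anti-automorphism $\sigma$ diagrammatically. Recall $\A_{k,k}\cong f_k\,ATL_{k,k}\,f_k$, so every element is a linear combination of affine Temperley--Lieb diagrams drawn in an annulus, sandwiched between a copy of $f_k$ attached to the inner boundary circle and a copy attached to the outer one. Let $\tau$ be the ``turn the annulus inside out'' operation, exchanging the inner and outer boundary circles, and let $R$ be reflection of the annular picture through a radial axis; set $\sigma:=\tau\circ R$. Because the reflection of $f_k$ through either axis is again $f_k$ (as noted above), and because both operations respect affine isotopy and send a removed closed circle to the real scalar $\delta$, each of $\tau$, $R$, hence $\sigma$, descends to a well-defined \emph{linear} self-map of $\A_{k,k}$ (in contrast to the conjugate-linear involution $\#$). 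Turning the annulus inside out exchanges the two circles along which annular diagrams are glued, hence reverses the order of the annular multiplication, so $\tau$ is a linear anti-automorphism; the radial reflection $R$ preserves the gluing circles and is a linear automorphism. Therefore $\sigma$ is a (linear, involutive) anti-automorphism of $\A_{k,k}$.

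Next I would verify $\sigma(x^k_{m,n})=x^k_{m,n}$ for each basis diagram of the preceding proposition. Such a diagram consists of the two copies of $f_k$, the $m$ through-strings carrying the $n$-th power of the rotation $\rho_m$, and the remaining $k-m$ strings routed around the back of the annulus in the unique non-crossing fashion. Both $\tau$ and $R$ reverse winding, so their composite leaves the rotation parameter $n$ unchanged; the two $f_k$'s are interchanged but individually fixed (any reindexing of their legs is absorbed into $f_k$), and the symmetric routing of the side strings is preserved, with no closed circle produced when isotoping back to standard form. Hence $\sigma$ fixes each $x^k_{m,n}$, and since these span $\A_{k,k}$ and $\sigma$ is linear, $\sigma=\mathrm{id}_{\A_{k,k}}$. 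As $\sigma$ is anti-multiplicative, $x\cdot y=\sigma(x\cdot y)=\sigma(y)\cdot\sigma(x)=y\cdot x$ for all $x,y\in\A_{k,k}$, which is the claim.

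The step I expect to be the main obstacle is the middle one: checking that $\sigma$ fixes each $x^k_{m,n}$ \emph{exactly}, with no sign or power of $\delta$, which requires care with the winding bookkeeping and, for $k$ even, with the rank-$0$ diagrams $x^k_{0,j}$. An alternative and more computational route would bypass $\sigma$ and instead expand $x^k_{m,n}\cdot x^k_{m',n'}$ in the basis directly, using $f_k^2=f_k$ and uncappability of $f_k$ to pin the resulting configuration down to rank $\min(m,m')$ and winding $n+n'$ and to show that the scalar coefficients are symmetric in the two inputs; but this forces one to evaluate the partial traces of Jones--Wenzl idempotents that occur, so the symmetry argument via $\sigma$ is cleaner.
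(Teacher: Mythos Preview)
Your approach is essentially the same as the paper's. The paper defines the anti-automorphism categorically as $r(f)=\overline{f}$ (the duality functor, described there as ``global rotation by $\pi$'' of the rectangular picture), observes that $f_k=\overline{f_k}$ and that each basis diagram $x^k_{m,n}$ is fixed by this symmetry, and concludes $r=\mathrm{id}$ on $\A_{k,k}$; your $\sigma=\tau\circ R$ is exactly this map expressed in annular-geometric terms (turning the annulus over in $3$-space), so the arguments coincide.
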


\begin{proof}    Define the map $r:\AC\rightarrow \AC$ given for $f\in \AC^{k}_{i,j}:=Mor(X_{k}\otimes X_{i}, X_{j}\otimes X_{k})$ by $r(f)=\overline{f}\in \AC^{\overline{k}}_{\overline{j}, \overline{i}}$.  $r$ is an anti-isomorphism with respect to annular multiplication. Then since $f_{k}=\overline{f_{k}}$, by the symmetry of our basis diagrams it is easy to see that  $r:\A_{k,k}\rightarrow \A_{k,k}$ given by a global rotation by $\pi$ is in fact the identity map on B, hence on all of $\AC_{k,k}$.  Then we have for any $x,y\in \A_{k,k}$,  $x\cdot y=r(x\cdot y)=r(y)\cdot r(x)=y\cdot x$.  Thus $\A_{k,k}$ is abelian.

\end{proof}

\medskip

This means $C^{*}(\A_{k,k})$ will be a unital, abelian $C^{*}$-algebra, hence isomorphic to the algebra of continuous complex valued functions on some compact Hausdorff space.  We describe these spaces below.

\begin{enumerate}
\item
Define $X_{0}:=[-\delta, \delta]$.
\item
For $k$ even, $k>0$, we define 

$$X_{k}:=\grd{spacexk}$$

\item
For $k$ odd, define 

$$X_{k}:=\grd{spacexkodd}$$
\end{enumerate}

  We will demonstrate the following:

\begin{theo} If $\delta>2$ then $C^{*}(\A_{k,k})\cong C(X_{k})$.

\end{theo}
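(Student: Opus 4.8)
The plan is to identify the Gelfand spectrum of the unital abelian $C^{*}$-algebra $C^{*}(\A_{k,k})$ by computing all of its irreducible (hence one-dimensional) representations together with the topology induced by the weak-$*$ topology on characters, and then to recognize the resulting compact Hausdorff space as $X_{k}$. By Corollary 4.8 and Proposition 5.5, the weight $k$ admissible representations of $\A_{k,k}$ are exactly the restrictions to $\A_{k,k}$ of representations of the whole tube algebra $\A$, and by Theorem 4.2 these correspond to representations of $ATL$; since $\A_{k,k}\cong f_k ATL_{k,k} f_k$, the relevant data is the action of $f_k ATL_{k,k} f_k$ on the weight spaces $V^{\alpha}_{k}$ of the irreducible lowest weight $m$ modules $V^{\alpha, m}$ with $m\le k$, $m\equiv k \pmod 2$. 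So the first step is: for each such irreducible module, compute the (at most one-dimensional) space $f_k \hat V^{\alpha}_{k}$ and, when it is nonzero (i.e.\ $B^{k}_{m,m}(\alpha)\neq 0$, which by Proposition 5.6 happens for all $\alpha$ when $\delta>2$), record the scalars by which the basis elements $x^{k}_{m,n}$ of Proposition 5.5 act on the vector $g^{\alpha}_{m,k}$.

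The key computation is that the rank-$m$ generators $x^{k}_{m,n}$ act on $g^{\alpha}_{m,k}$ essentially by the rotation eigenvalue of the underlying lowest weight $m$ module (up to an explicit scalar coming from the $B$-coefficients of Proposition 5.3), while the rank-$m'$ generators with $m'<m$ annihilate $g^{\alpha}_{m,k}$ because fewer through-strings cannot be supported, and the rank-$m'$ generators with $m'>m$ also act by computable scalars obtained by capping off pairs of through-strings (producing factors involving the $B^{k}_{m',m}$ ratios). Thus each irreducible admissible representation of $\A_{k,k}$ is labelled by the pair (rank $m$, parameter $\alpha$) with $m\in\{0,2,\dots\}$ or $\{1,3,\dots\}$ depending on parity, $m\le k$, and $\alpha\in[-\delta,\delta]$ if $m=0$, $\alpha\in S^1$ if $m>0$; and the character values on the finite basis $B$ are explicit rational functions of $\alpha$ (and $q$). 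The second step is to assemble these labels into a set and put the Gelfand topology on it: within a fixed rank $m$ the characters vary continuously and injectively in $\alpha$, giving a copy of $[-\delta,\delta]$ or $S^1$, but the strata of different ranks are glued along the points where lower-rank characters coincide with limits of higher-rank ones — precisely the points where some $B$-coefficient degenerates or the rotation parameter hits a boundary value ($t=\pm\delta$, or the $\omega=\pm1,\pm i$ phenomena, though the latter only occur at $\delta=2$). Matching these identifications against the picture defining $X_{k}$ gives a bijection, and since both spaces are compact Hausdorff it suffices to check it is continuous, which follows from continuity of the character values in the parameters.

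Concretely I would organize it as: (1) reduce via Theorem 4.2 and Corollary 4.8 to classifying weight $k$ admissible irreducible representations of $f_k ATL_{k,k} f_k$; (2) using the basis $B$ of Proposition 5.5 and the diagrammatics of Proposition 5.3, show every such representation is one-dimensional and compute the character on each $x^{k}_{m,n}$ as an explicit function of $(m,\alpha)$ — this is where the $B^{k}_{m,l}$ recursions do the work; (3) check that, for $\delta>2$, distinct pairs $(m,\alpha)$ give distinct characters except for the boundary identifications already visible in Proposition 5.6(1) (the points $t=\pm\delta$ at rank $0$), so the spectrum as a set is $X_{k}$; (4) verify that the map from $X_{k}$ (with its given topology) to the spectrum is continuous by noting each coordinate character value is a continuous function on $X_{k}$, and conclude it is a homeomorphism by compactness; (5) deduce $C^{*}(\A_{k,k})\cong C(X_{k})$ from Gelfand duality, having already observed in Proposition 5.7 that $\A_{k,k}$ is abelian and using that $C^{*}(\A_{k,k})\cong p_k C^{*}(\A) p_k$ is unital. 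The main obstacle I anticipate is step (2)–(3): pinning down the exact character values on the higher-rank generators $x^{k}_{m',n}$ acting on $g^{\alpha}_{m,k}$ for $m'>m$, and then verifying that these values, as $\alpha$ ranges over a circle or interval within each rank stratum, glue up at exactly the points dictated by the combinatorial picture of $X_{k}$ and nowhere else — in other words, proving the topology, not just the underlying set, comes out right. This requires care with how sequences of higher-rank characters limit onto lower-rank ones, and is precisely the place where the $\delta>2$ genericity (all relevant $B$-coefficients strictly positive, no extra degeneracies) is essential, as contrasted with the $\delta=2$ case handled separately.
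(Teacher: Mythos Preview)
Your proposal is correct and follows essentially the same route as the paper: reduce to characters of the abelian algebra via Gelfand duality, enumerate them through the Jones--Reznikoff classification of lowest-weight $ATL$ modules, compute their values on the basis $B$ using the $B^{k}_{m,l}$ recursions, separate points, and then determine the topology by identifying the limits of characters at the ``missing'' parameter values. One small correction: the gluing goes in the opposite direction from what you wrote --- it is sequences of \emph{lower}-weight characters (weight $0$, $t\to\pm\delta$) that limit onto \emph{higher}-weight ones (the weight $2$ circle at $\omega=\mp 1$), not the reverse; this is exactly the content of the paper's Lemma 5.13, and it is indeed the crux of step (4).
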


For $\delta=2$, the situation is different.  As discovered in \cite{Jo3}, the annular representation theory of $ATL(2)$ is not generic.  In particular, there are some ``missing" one dimensional representations.  This will force us to identify points, resulting in some interesting topological spaces.

\begin{enumerate}
\item
Define $Y_{0}:=[-2, 2]$.
\item
For $k$ even, $k>0$ define

$$Y_{k}:=\grc{ykeven}$$
\item
For $k$ odd, define 

$$Y_{k}:=\grc{ykodd}$$
\end{enumerate}

\begin{theo} If $\delta=2$ then $C^{*}(\A_{k,k})\cong C(Y_{k})$.

\end{theo}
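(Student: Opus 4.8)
## Proof proposal for Theorem 5.10 ($\delta = 2$ case)

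The plan is to mirror the structure of the proof of Theorem 5.9 for $\delta > 2$, but carefully track the degeneracies discovered in Proposition 5.5. By Proposition 5.7, $\A_{k,k}$ is abelian, so $C^{*}(\A_{k,k})$ is $C(\widehat{\A_{k,k}})$ where $\widehat{\A_{k,k}}$ is the spectrum, topologized via the universal norm $\|\cdot\|_u$ restricted to $\A_{k,k}$. The first step is to identify, as a \emph{set}, the characters of $\A_{k,k}$ that are weight $k$ admissible, i.e.\ those that extend to $p_k C^{*}(\A) p_k$ (Corollary 4.8). By Theorem 4.2 and the analysis of $Rep(ATL)$, every such character arises by restricting an irreducible lowest-weight $m$ representation $V^{\alpha, m}$ (with $m \le k$, $m \equiv k \bmod 2$) to the corner $\A_{k,k} \cong f_k ATL_{k,k} f_k$, and evaluating on the one-dimensional space $f_k \hat V^{\alpha}_k$, which is nonzero precisely when $g^{\alpha}_{m,k} \ne 0$. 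So I would first enumerate, for fixed $k$, which pairs $(\alpha, m)$ give a nonzero contribution at level $k$: using Proposition 5.5(2) with $\delta = 2$, for $m$ even the parameters $\omega = \pm 1$ contribute \emph{only} at $m = k$ (since $g^{\pm 1}_{m,k} = 0$ for $k > m$), and similarly $\omega = \pm i$ for $m$ odd; all other $\omega$, and the whole segment $[-2,2]$ at weight $0$, contribute at every compatible level. This gives the underlying set of the spectrum: a disjoint union over $0 \le m \le k$, $m \equiv k$, of circles $S^1$ (one for each weight $m < k$ with the finitely many degenerate points removed, plus all of $S^1$ for $m = k$), together with $[-2,2]$ when $k$ is even — and this should match the point-set underlying $Y_k$.

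The second step is to compute the topology, i.e.\ to understand which sequences of characters converge in the weak-$*$ topology coming from $\|\cdot\|_u$. This is where the $\delta = 2$ case genuinely differs from $\delta > 2$: the ``missing'' representations at $\omega = \pm 1$ (resp.\ $\pm i$) for weight $m > 0$ mean that certain points on the weight-$m$ circle which were isolated-looking in the generic case must now be identified with, or become limit points of, lower- or higher-weight strata. Concretely, I would evaluate a generating family of elements of $\A_{k,k}$ — the basis $B$ of Proposition 5.6, in particular the rotation elements $x^{k}_{m,n}$ and the ``trace-like'' elements $x^{k}_{0,j}$ — on each character, getting explicit functions (powers of $\omega$, Chebyshev-type polynomials in $t$, and the numbers $B^{k}_{m,\ell}(\alpha)$ from Proposition 5.5). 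A character is then a point of the spectrum, and two points are topologically indistinguishable (or a net converges) iff all these functions agree (resp.\ converge). The degeneracy $B^{k}_{m,m}(\pm 1) = 0$ forces the value of the ``norm of $g^{\alpha}_{m,k}$'' coordinate to vanish along $\omega \to \pm 1$, which is exactly the gluing instruction encoded in the picture defining $Y_k$: the endpoint of a higher stratum gets attached to a point of a lower one.

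The third step is to verify that the resulting compact Hausdorff space is precisely $Y_k$ as drawn, and that $C^{*}(\A_{k,k}) \cong C(Y_k)$. For this I would: (a) check that $\A_{k,k}$ separates the points of the candidate space $Y_k$ — this follows because the rotation elements separate points within each circular stratum, the $x^k_{0,j}$ separate the $[-2,2]$ stratum, and the rank grading together with the $B$-coefficients separates different strata; (b) check completeness/compactness — the universal norm bound of Corollary 4.7 shows $\|x\|_u < \infty$ for each $x$, so each $x \in \A_{k,k}$ gives a bounded continuous function on the spectrum, and the spectrum is compact; (c) invoke Stone–Weierstrass to conclude that the image of $\A_{k,k}$ is dense in $C(Y_k)$, hence $C^{*}(\A_{k,k}) \cong C(Y_k)$. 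One must also confirm there are no \emph{extra} characters beyond those coming from the irreducibles of $Rep(ATL)$ — this is where admissibility (Corollary 4.8) is used: an arbitrary character of the abelian algebra $\A_{k,k}$ is weight $k$ admissible iff it is bounded by $\|\cdot\|_u$, and boundedness forces it to be a vector state of some tube-algebra representation, hence decomposable into irreducibles, hence one of the listed characters.

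The main obstacle I expect is step two: pinning down the exact gluing topology. The set-level answer is a routine bookkeeping exercise given Proposition 5.5, but proving that the weak-$*$ topology from $\|\cdot\|_u$ is \emph{exactly} the quotient/adjunction topology depicted for $Y_k$ — rather than something coarser or finer — requires care. In particular, one must show that a net $\omega_i \to \pm 1$ in a weight-$m$ stratum ($m > 0$) does not converge to the ``obvious'' point $\omega = \pm 1$ of that same circle (which is absent), but instead escapes to, or glues onto, the appropriate adjacent stratum; and conversely that points of distinct strata which the picture keeps separate really are separated by some element of $\A_{k,k}$ in norm. This amounts to a delicate analysis of the functions $\alpha \mapsto B^{k}_{m,\ell}(\alpha)$ near the degenerate parameters, combined with the explicit form of the universal norm from Corollary 4.7, and is the place where the non-genericity of $TLJ(2)$ — ultimately the unitarity of the braiding when $q = 1$ — makes itself felt.
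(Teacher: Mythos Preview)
Your overall strategy matches the paper's: enumerate admissible characters as a set (Lemma 5.10), check that $\A_{k,k}$ separates them (Lemma 5.12), observe that characters are polynomials in the parameter so the topology agrees with the natural one away from the missing points (Lemma 5.11(4)), and then compute the limits at the missing points (Lemma 5.14). The paper does not invoke Stone--Weierstrass explicitly but it is implicit in the Gelfand identification.

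There is, however, a concrete misconception in your step two that would derail the computation if you followed it. You write that ``the endpoint of a higher stratum gets attached to a point of a lower one'' and that a net approaching a missing point ``glues onto the appropriate adjacent stratum''. This is not what happens. The key observation (this is the content of Lemma 5.14) is that as $\omega_n \to \pm 1$ on a weight-$m$ circle with $m<k$ (even case; $\pm i$ in the odd case), the coefficients $B^{k}_{m,n}(\omega_n)$ tend to zero for \emph{every} $n<k$, because each factor in the product defining $B^k_{m,n}$ contains a term $(q^{j}+q^{-j}-\omega^2-\omega^{-2})$ which vanishes at $\omega=\pm 1$ when $q=1$. Consequently every basis element $x^{k}_{n,\ell}$ of rank $n<k$ evaluates to zero in the limit. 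The limiting character therefore kills all low-rank diagrams and must be a \emph{lowest weight $k$} character, i.e.\ a point on the top circle, not on an adjacent one. Evaluating $\omega_n(x^{k}_{k,1})=\omega_n(\rho_k)$ then tells you which of the two points $\pm 1$ (or $\pm i$) on the weight-$k$ circle you land on, and the sign alternates with the parity of $(k-m)/2$. So the correct picture is that \emph{every} lower-weight circle, and (for $k$ even) the interval as well, is glued directly onto the single weight-$k$ circle at its two distinguished points; there is no cascade through adjacent strata. Once you redo the limit computation with this in mind, the space you get is exactly the $Y_k$ drawn in the paper.
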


We note that in the case $k=0$, this essentially recovers a result of Popa and Vaes.  The only difference is that they use the even part of the $TLJ(\delta)$ category while we take the category as a whole, thus they have the ``square" of this interval, namely $[0,\delta^{2}]$ (see \cite{PV}).

\ \ To understand the one dimensional representations of $\A_{k,k}$ (which we often call characters) we note that (almost) every lowest weight $m$ representation with parameter $\alpha$ and $k-m$ even gives a one dimensional representation representation of $\A_{k,k}$.  We simply take the vector $g^{\alpha}_{k,m}$.  Then this will be an eigenvector of $\A_{k,k}$ viewed as a sub-algebra of $ATL_{k,k}$.  Thus if we understand the action of $\A_{k,k}$ on the vector $g^{\alpha}_{m,k}$ we will understand the characters.  There is a snag, however.  From the above proposition, some of these $g^{\alpha}_{m,k}$ are $0$ in the semi-simple quotient, hence do not produce characters on $\A_{k,k}$.  Furthermore, it is not a priori clear that every admissible representation of $\A_{k,k}$ comes from $ATL$ in the manner described here.  For example, it seems feasible that a one dimensional representation of $\A_{k,k}$ may have its canonical extension infinite dimensional in other weight spaces.  We will show that this is not the case.

\begin{lemm} For $\delta>2$, one dimensional representations of $\A_{k,k}$ are parameterized as a set by 
\begin{enumerate}
\item
If $k$ is even, $k>0$, the space $X_{k}:=(-\delta,\delta)\sqcup S^{1}\sqcup\dots\sqcup S^{1}$ if with $\frac{k}{2}$ copies of $S^{1}$
\item
If $k$ is odd, the  space $X_{k}:=S^{1}\sqcup\dots \sqcup S^{1}$ with $\frac{k+1}{2}$ copies of $S^{1}$
\item
If $k=0$, the space $X_{0}:=[-\delta,\delta]$.
\end{enumerate}

\end{lemm}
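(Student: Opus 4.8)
\emph{Strategy.} First I would set up the correspondence between one-dimensional representations of $\A_{k,k}$ and the restrictions to weight $k$ of the irreducible lowest weight $TL$-modules classified in Proposition 5.5, working through the isomorphism $\A_{k,k}\cong f_{k}ATL_{k,k}f_{k}$ from the proof of Proposition 3.5 and the equivalence $Rep(ATL)\cong Rep(\A)$ of Theorem 4.2. The organizing device will be the filtration of $\A_{k,k}$ by the ideals $I_{m}$ spanned by the basis diagrams $x^{k}_{m',n}$ of Proposition 5.6 of through-rank $m'\le m$: since composing annular diagrams cannot increase the number of through strands, each $I_{m}$ is a two-sided ideal, $I_{k}=\A_{k,k}$, and one reads off from the pictures that $I_{m}/I_{m-2}\cong\C[\mathbb{Z}]$ (generated by the image of the unitary rotation $\rho_{m}$) for $m>0$, while $I_{0}$ is the fusion algebra when $k$ is even. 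As $\A_{k,k}$ is commutative (Proposition 5.7), a character is determined by its restriction to the least $I_{m}$ on which it does not vanish; writing $m$ for that rank (of the parity of $k$) and $\alpha$ for the induced character of $I_{m}/I_{m-2}$, one gets $\alpha\in S^{1}$ from $\alpha=\chi(x^{k}_{m,1})$ when $m>0$, while for $m=0$ the character is pinned down by $t=\chi(x^{k}_{0,1})$, which is real since $f_{1}$ is self-dual and satisfies $|t|\le\delta$ by the universal-norm bound of Corollary 4.7. This already injects the set of one-dimensional representations into a disjoint union of circles together with, when $k$ is even, one interval.

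\emph{From $X_{k}$ to characters.} Conversely, for each irreducible lowest weight $m$ module $V^{\alpha,m}$ of Proposition 5.5 with $m\le k$, $m\equiv k\pmod 2$ and $g^{\alpha}_{m,k}\neq 0$, the weight-$k$ space is the line $\C g^{\alpha}_{m,k}$ (this is the dimension count recorded just before Proposition 5.4), so restricting the module gives a one-dimensional representation of $\A_{k,k}$ of rank $m$ and parameter $\alpha$. For $\delta>2$, Proposition 5.5 records exactly which parameters survive: $(-\delta,\delta)$ in rank $0$ when $k>0$, all of $[-\delta,\delta]$ in rank $0$ when $k=0$, and all of $S^{1}$ in each rank $m>0$. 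Letting $m$ run over $\{0,2,\dots,k\}$ for $k$ even and $\{1,3,\dots,k\}$ for $k$ odd assembles these pieces into precisely the space $X_{k}$ of the statement, and the resulting map $X_{k}\to\{\text{one-dimensional representations}\}$ is injective since distinct points carry distinct pairs $(m,\alpha)$.

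\emph{Surjectivity --- the crux.} It remains to show every one-dimensional representation $\chi$ comes from some $V^{\alpha,m}$; this is the point the paragraph before the lemma flags, the worry being that the canonical extension of $\chi$ could be infinite-dimensional in higher weights. I would let $m$ be the rank of $\chi$ and $\alpha$ its parameter, form the GNS module $(\pi_{\chi},H_{\chi})$ (the GNS construction together with Corollary 4.8), and regard it via Theorem 4.2 as an $ATL$-module. If $z\in\A_{m',k}$ has through-rank $<m$ then $z^{\#}z\in I_{m-2}$, so $\chi(z^{\#}z)=0$; hence $(H_{\chi})_{m'}=0$ for $m'<m$ and, applying the same computation to rank-$m$ elements and the character $\alpha$ (respectively $t$), $(H_{\chi})_{m}$ is one-dimensional. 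Pushing the cyclic vector down to weight $m$ via the rank-$m$ ``cap-down'' $y$ produces a nonzero lowest weight vector $w\in(H_{\chi})_{m}$ that generates $H_{\chi}$, is killed by $ATL^{<m}_{m,m}$, and satisfies $\rho_{m}w=\alpha w$ (respectively is an eigenvector of the fusion algebra with character $t$); the eigenvalue is forced by $\pi_{\chi}(x^{k}_{m,1})v=\chi(x^{k}_{m,1})v=\alpha v$ together with the diagrammatic factorization $x^{k}_{m,1}\equiv y^{\#}\rho_{m}y\pmod{I_{m-2}}$. Thus $H_{\chi}$ is the unique irreducible lowest weight $m$ Hilbert $TL$-module with parameter $\alpha$, namely $V^{\alpha,m}$ --- it is locally finite by the decomposition $\hat V^{\alpha}_{n}\cong\bigoplus_{j}(f_{n-2j},n)\otimes g^{\alpha}_{m,n-2j}$ --- and since $(H_{\chi})_{k}\neq 0$ we conclude $g^{\alpha}_{m,k}\neq 0$, i.e.\ $(\alpha,m)\in X_{k}$ and $\chi$ is the character attached to it in the previous paragraph. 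Together with injectivity this yields the claimed parameterization.

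\emph{Main obstacle.} The hard part will be the third paragraph: pinning down the GNS module of a one-dimensional $\chi$ as the finite-type module $V^{\alpha,m}$ rather than some inflation of it. Within that, the two things needing real care are the one-dimensionality of $(H_{\chi})_{m}$ and the verification that the generating vector $w$ is genuinely a rotation eigenvector; both come down to the through-rank ideal filtration of $ATL$ and the identity expressing $x^{k}_{m,n}$ as $\rho_{m}^{\,n}$ sandwiched between cap-downs, after which local finiteness and irreducibility of $H_{\chi}$ follow from the lowest weight analysis recalled after Definition 5.1. Everything in the first two paragraphs is then bookkeeping on top of Proposition 5.5.
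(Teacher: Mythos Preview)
Your approach is essentially the paper's: produce characters from the vectors $g^{\alpha}_{m,k}$ via the Jones--Reznikoff classification (Proposition~5.4 in the paper's numbering), and for surjectivity form the canonical extension of a given character, locate the minimal nonzero weight $m$, argue one-dimensionality there, and identify the result with a classified lowest-weight module through Theorem~4.2; your through-rank ideal filtration just makes explicit what the paper compresses into ``it is straightforward to check.'' Two small corrections are worth flagging. First, $\chi(x^{k}_{m,1})$ is not $\alpha$ but $\alpha\,B^{k}_{m,m}(\alpha)$ (cf.\ Lemma~5.11 and the proof of Lemma~5.12), so you should recover $\alpha$ as the ratio $\chi(x^{k}_{m,1})/\chi(x^{k}_{m,0})$; relatedly, $I_{m}/I_{m-2}$ is not literally $\C[\mathbb Z]$ since the product $x^{k}_{m,n}\cdot x^{k}_{m,n'}$ carries a nontrivial coefficient modulo lower rank. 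Second, the paper explicitly \emph{defers} injectivity of $X_{k}\to\{\text{characters}\}$ to the separating-element computation in Lemma~5.12, whereas your GNS identification $H_{\chi}\cong V^{\alpha,m}$ actually yields it directly and more cleanly---but you should state that reasoning, since ``distinct points carry distinct pairs $(m,\alpha)$'' alone does not preclude two distinct pairs restricting to the same character on $\A_{k,k}$.
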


\begin{lemm} For $\delta=2$, one dimensional representations of $\AC_{k,k}$ are parameterized by:
\begin{enumerate}
\item
If $k>0$ is even, the space $Y_{k}:=(-2, 2)\sqcup \left(S^{1}-\{-1,1\}\right)\sqcup\left( S^{1}-\{-1,1\}\right)\sqcup\dots \sqcup S^{1}$ with $\frac{k}{2}-1$ copies of $S^{1}-\{-1,1\}$ and one copy of $S^{1}$.
\item
If $k$ is odd, $Y_{k}:=\left(S^{1}-\{-i, i\}\right)\sqcup \dots \sqcup S^{1}$ with $\frac{k+1}{2}-1$ copies of $S^{1}-\{-i, i\}$ and one copy of $S^{1}$.
\item
$Y_{0}:=[-\delta, \delta]$.
\end{enumerate}

\end{lemm}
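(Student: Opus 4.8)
The plan is to run the same strategy used for the $\delta>2$ case (Lemma 5.9), keeping careful track of the places where Proposition 5.4 tells us that a vector $g^{\alpha}_{m,k}$ degenerates when $\delta=2$. Since $\A_{k,k}$ is abelian (Proposition 5.8), a one-dimensional representation is the same thing as a character, and by Corollary 4.8 together with Theorem 4.2 and the identification $\A_{k,k}\cong f_{k}ATL_{k,k}f_{k}$ coming from the proof of Proposition 3.5, the admissible characters are exactly those obtained by restricting a non-degenerate $*$-representation of $ATL$ to the corner $\A_{k,k}$. So the whole problem is to list these restrictions and check they biject with $Y_{k}$.

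For the existence half, I would take each irreducible lowest weight $m$ Hilbert $TL$-module $V^{\alpha,m}$ from Proposition 5.4 with $0\le m\le k$ and $k-m$ even, and restrict it to $\A_{k,k}$. By the decomposition $\hat V^{\alpha}_{k}\cong\bigoplus_{j}(f_{k-2j},k)\otimes g^{\alpha}_{m,k-2j}$ recorded just before Proposition 5.4, the corner $f_{k}ATL_{k,k}f_{k}$ acts on the at most one-dimensional space spanned by $g^{\alpha}_{m,k}$, so $V^{\alpha,m}$ contributes a character precisely when $g^{\alpha}_{m,k}\ne 0$. Feeding in the $\delta=2$ clauses of Proposition 5.4: for $m=0$ the set of allowed $\alpha$ is $(-2,2)$ if $k>0$ and the closed interval $[-2,2]$ if $k=0$; for $m>0$ even it is $S^{1}\setminus\{\pm1\}$ if $k>m$ and all of $S^{1}$ if $k=m$; for $m$ odd it is $S^{1}\setminus\{\pm i\}$ if $k>m$ and all of $S^{1}$ if $k=m$. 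Using the rank filtration of the basis $B$ of Proposition 5.7 — elements of rank $<m$ kill $g^{\alpha}_{m,k}$ while the rank $m$ part acts through the rotation eigenvalue $\alpha$ — one checks these characters are pairwise distinct. Summing over the admissible ranks, $m\in\{0,2,\dots,k\}$ when $k$ is even and $m\in\{1,3,\dots,k\}$ when $k$ is odd, yields exactly one copy of $(-2,2)$ together with $\frac{k}{2}-1$ copies of $S^{1}\setminus\{\pm1\}$ and one copy of $S^{1}$ in the even case, and $\frac{k+1}{2}-1$ copies of $S^{1}\setminus\{\pm i\}$ and one copy of $S^{1}$ in the odd case — that is, the space $Y_{k}$.

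The real content, and the step I expect to fight with, is the converse: that these are all the characters. Given a character $\chi$ of $\A_{k,k}$, Corollary 4.8 produces a representation $(\pi,H)$ of $\AC$ (hence, via Theorem 4.2, of $ATL$) whose weight-$k$ space is one-dimensional and carries $\chi$. If one knew the cyclic subrepresentation generated by that weight-$k$ vector were locally finite, the decomposition of locally finite $TL$-modules into irreducible lowest weight modules (following \cite{Jo4}) would finish the proof: a one-dimensional weight-$k$ space forces a single summand $V^{\alpha,m}$ with $g^{\alpha}_{m,k}\ne0$ and no other summand visible in weight $k$, so $\chi$ is one of the characters of the previous paragraph. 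Establishing local finiteness is the crux; the route I would take is to argue directly that $\chi$ must factor through the quotient of $\A_{k,k}$ by the ideal generated by the rank $<m$ basis elements, for a well-defined $m$ (the smallest rank on which $\chi$ does not vanish), that this quotient is generated by the single unitary $\rho_{m}$ — or, when $m=0$, by the self-adjoint element $f_{1}\in\A_{0,0}$, whose spectrum is forced into $[-2,2]$ by Corollary 4.7 — so that $\chi$ is determined by a single parameter $\alpha$, and then to build the canonical extension weight space by weight space: the induced inner product on the weight $n$ space is, after the orthogonal decomposition preceding Proposition 5.4, a sum of terms proportional to $B^{n-2j}_{m,m}(\alpha)$, which the recursions of Proposition 5.3 compute explicitly. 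These are finite-rank, so the extension is locally finite, and positivity of the form forces $\alpha$ into the allowed range, with the excluded values — $\omega=\pm1$ for $m$ even, $\omega=\pm i$ for $m$ odd, and $t=\pm2$ when $m=0$ and $k>0$ — ruled out precisely because there $B^{k}_{m,m}(\alpha)=0$, so the weight-$k$ space of the extension collapses and $\chi(p_{k})$ cannot equal $1$. This vanishing is exactly the reason the $\delta=2$ list is strictly shorter than the $\delta>2$ list of Lemma 5.9.
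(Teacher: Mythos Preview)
Your proposal is correct and follows essentially the same route as the paper's proof. Both arguments produce characters by acting on the vectors $g^{\alpha}_{m,k}$ and use Proposition~5.4 to identify exactly which parameters $\alpha$ survive when $\delta=2$; for the converse, both pass from a character on $\A_{k,k}$ to its canonical extension, locate the minimal weight $m$ where it is nonzero, observe that the weight-$m$ piece is one-dimensional, and invoke the Jones--Reznikoff classification of irreducible lowest-weight $ATL$ modules via Theorem~4.2.

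Two minor organizational differences are worth noting. First, you verify pairwise distinctness of the characters inside this proof via the rank filtration; the paper instead defers that to a separate lemma (Lemma~5.12), so strictly speaking the paper's proof of the present lemma only establishes that $Y_{k}$ \emph{surjects} onto the character space. Second, you make the local-finiteness of the canonical extension explicit through the $B^{n-2j}_{m,m}(\alpha)$ recursions, whereas the paper treats this as implicit in the known finite-dimensionality of the spaces $\hat V^{\alpha}_{n}$ discussed before Proposition~5.4. Your extra care here is justified --- the paper's ``it is straightforward to check'' does hide exactly the work you spell out --- but it is the same argument underneath.
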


\begin{proof}  This set produces characters by evaluating the action of $\A_{k,k}$ on the vectors $g^{\alpha}_{m,k}$ for $k-m\ge 0$ and $k-m$ even.  Now, the reason that $\pm \delta$ is missing in the interval $(-\delta,\delta)$ from all but $k=0$ is that the trivial representation of $\A_{0,0}$, does not extend to higher weight spaces by Proposition 5.4 (1), i.e. $g^{\pm \delta}_{0,k}=0$ in the semi-simple quotient of the canonical extension.  In the case $\delta=2$, we have from Proposition $5.4\ (2)$ that the characters corresponding to the lowest weight $k$ representations are ``missing", meaning that the corresponding $g^{\alpha}_{m,k}$ are $0$ for the parameters $\omega=\pm1$ for even $m>0$ and $\omega=\pm i$ for odd $m>0$.  Thus the sets listed describes all possible characters on $\AC_{k,k}$ coming from $ATL$, by \cite{Jo3}.  Applying Theorem $4.2$ we see that this yields all possible characters.  In particular, suppose we have a character $\alpha$ on $\A_{k,k}$.  Let $m$ be the smallest m such that the canonical extension to $\A_{m,m}$ is non-zero.  It is straightforward to check that since $\alpha$ is irreducible, the canonical extension to $\A_{m,m}$ is one dimensional.  Then when extended to an $ATL$ representation, this extends to an irreducible lowest weight $m$ representation, and we apply the classification of these described in the beginning of this section.

\end{proof}

\medskip

\ \ We can identify the points of the circles with characters of various weights, with each distinct circle corresponding to distinct weights.  The interior of the interval $(-\delta, \delta)$ corresponds to weight $0$ characters.  We know now that all characters must be given by $X_{k}$, but we do not yet know that distinct points in $X_{k}$ yield truly distinct characters on $\A_{k,k}$.  They yield distinct representations for $ATL$, but independent characters might become the same when restricted to the tube algebra.  In fact, we will show that they are distinct,  but first we see how to evaluate characters on a special subset of our basis, namely elements in $\A_{k,k}$ of the form $x^{k}_{m,1}$.

Let $t\in (-\delta, \delta)$, and $k$ even.  Then we see that $t(x^{k}_{0, j})=t^{j}B^{k}_{0,0}(t)$.  This is non-zero for $t\in (-\delta, \delta)$.  For $m>0$, and $\omega$ the eigenvalue for a lowest weight $m$ representation, we see that for $n\ge m$, $\omega(x^{k}_{n,1})g^{\omega}_{m,k}=x^{k}_{n,1}g^{\omega}_{m,k}=B^{k}_{m,n}(\omega)\omega(\rho_{n})g^{\omega}_{m,k}$, where here, we identify $\rho_{n}\in \A_{n,n}$ and the $\omega$ as a character on $\A_{n,n}$.  Thus to compute the value of $\omega(x^{k}_{n,1})$, we simply need to determine the value of $\omega(\rho_{n})$.  In pictures, we have to compute the scalar that pops out when we substitute $TL$ diagrams in the bottom $f_{n}$ of the picture $\grc{rhoalpha}$.  If $n>m$, we see that there are precisely two diagrams which give non-zero contributions, a cup in the upper right hand corner, and a bottom cap at positions $\frac{n-m}{2}$ and $\frac{n+m}{2}$.  The coefficients of these diagrams in $f_{n}$ are given by $(-1)^{\frac{m+n}{2}}[\frac{n-m}{2}]/[n]$ and $(-1)^{\frac{m-n}{2}}[\frac{n+m}{2}]/[n]$ respectively.  The first diagram gives an eigenvalue of $\omega^{-1}$ and the second gives and eigenvalue of $\omega$, and thus we get $\omega(\rho_{n})=(-1)^{\frac{m-n}{2}}\left(\omega^{-1} (-1)^{n}[\frac{n-m}{2}]+ \omega[\frac{n+m}{2}]\right)/[n]$.  If $n=m$, we simply get $\omega$.  We apply the same procedure for $m=0$, which is even easier since there is only one $TL$ diagram to evaluate.  

\ \ We also notice that applying this same procedure to arbitrary basis diagrams, we see that an element of $\A_{k,k}$ evaluated at a character $\alpha$ will depend on $\alpha$ only as polynomial either in $\alpha$ and $\alpha^{-1}$ if $\alpha\in S^{1}$ or just in $\alpha$ if $\alpha\in (-\delta, \delta)$.   We record these results in the following lemma, which expresses our knowledge of how to evaluate characters:

\begin{lemm} Let $k>0$.
\begin{enumerate}
\item
For $k$ even, $t\in (-\delta,\delta)$, $k$ even, we have $t(x^{k}_{0,j})=t^{j}B^{k}_{0,0}(t)$.
\item
For $k$ even, $t\in (-\delta, \delta)$, $t(x^{k}_{n, 0})=B^{k}_{0,n}(t)$.  $t(x^{k}_{n,1})=(-1)^{\frac{n}{2}}t\frac{[\frac{n}{2}]}{[n]}B^{k}_{0,n}(t)$.
\item
For $\omega\in S^{1}$ of lowest weight $m>0$, for $k,n\ge m$,  $\omega(x^{k}_{n,1})= \frac{(-1)^{\frac{m-n}{2}}}{[n]}\left ((-1)^{n}[\frac{n-m}{2}]\omega^{-1}+[\frac{n+m}{2}]\omega\right) B^{k}_{m,n}(\omega)$, where here $[0]=0$.
\item
If $\omega\in S^{1}$, then $\omega(x^{k}_{j,m})\in \C[\omega, \omega^{-1}]$, and if $t\in (-\delta, \delta)$, then $t(x^{k}_{j,m})\in \C[t]$.
\end{enumerate}
\end{lemm}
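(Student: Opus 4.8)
The plan is to evaluate each character of $\A_{k,k}$ directly on the one-dimensional weight-$k$ space it distinguishes. By the classification recalled above (Proposition 5.4 together with Theorem 4.2), every one-dimensional representation $\alpha$ of $\A_{k,k}$ arises from an irreducible lowest weight $m$ Hilbert $TL$-module for a unique $m\le k$ with $k-m$ even, realized on the one-dimensional space $\C\,g^{\alpha}_{m,k}\subseteq \hat{V}^{\alpha}_{k}$ (with $\alpha=t\in(-\delta,\delta)$ when $m=0$ and $\alpha=\omega\in S^{1}$ when $m>0$). Via the identification $\A_{k,k}\cong f_{k}ATL_{k,k}f_{k}$ from the proof of Proposition 3.5, computing $\alpha$ on a basis diagram amounts to acting with the corresponding annular tangle on $g^{\alpha}_{m,k}$ and reading off the scalar eigenvalue, which is precisely the diagrammatic computation carried out above.

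First I would handle the ``rotation power one'' elements $x^{k}_{n,1}$, which carry the essential content. Stacking $x^{k}_{n,1}$ onto $g^{\alpha}_{m,k}$, the adjacent copies of $f_{k}$ merge into one; pushing the capping-around strands down past this idempotent factors the eigenvalue into two independent pieces. The first is the passage between weight $k$ and weight $n$, which by the very definition of the numbers $B^{k}_{m,l}(\alpha)$ contributes exactly $B^{k}_{m,n}(\alpha)$; this is a Laurent polynomial in $\omega$ (resp.\ a polynomial in $t$) with only quantum-integer denominators, by the recursions of Proposition 5.3. The second is the eigenvalue of the central rotation $\rho_{n}$ on $g^{\alpha}_{m,n}$: expanding the bottom $f_{n}$ as a sum of Temperley--Lieb diagrams, uncappability of $f_{n}$ annihilates every term except the cup-cap diagrams already isolated in the discussion preceding the lemma --- the caps at positions $\frac{n-m}{2}$ and $\frac{n+m}{2}$ when $m>0$, which for $m=0$ collapse to the single cap at position $\frac{n}{2}$. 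Reading off their coefficients in $f_{n}$ from Morrison's formulas, namely $(-1)^{(m+n)/2}[\frac{n-m}{2}]/[n]$ and $(-1)^{(m-n)/2}[\frac{n+m}{2}]/[n]$, and noting that each surviving cap closes into a homologically nontrivial circle around $s_{\alpha}$ and thereby produces a factor $\omega^{\mp1}$ (or a single factor $t$ when $m=0$), one obtains $\omega(\rho_{n})=\frac{(-1)^{(m-n)/2}}{[n]}\bigl((-1)^{n}[\frac{n-m}{2}]\omega^{-1}+[\frac{n+m}{2}]\omega\bigr)$. Multiplying by $B^{k}_{m,n}(\omega)$ gives item (3); the $m=0$ specialization gives the formula for $t(x^{k}_{n,1})$ in item (2), and dropping the central $\rho_{n}$ altogether leaves $t(x^{k}_{n,0})=B^{k}_{0,n}(t)$.

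For item (1), the rank-zero element $x^{k}_{0,j}$ differs as an annular tangle from $x^{k}_{0,0}$ by $j$ extra loops wrapping the core of the annulus around $s_{t}$, each of which the weight-$0$ representation evaluates to $t$, while $x^{k}_{0,0}$ reduces to $B^{k}_{0,0}(t)$ by the same ``weight $k$ down to weight $0$'' argument; hence $t(x^{k}_{0,j})=t^{j}B^{k}_{0,0}(t)$. Item (4) then follows by running the same procedure on an arbitrary basis diagram: each Jones--Wenzl idempotent appearing expands into finitely many Temperley--Lieb diagrams, uncappability leaves only finitely many composites, and each of these closes into a fixed number of circles around $s_{\alpha}$, so the value is a finite sum of monomials in $\omega^{\pm1}$ (resp.\ $t$), with the only denominators being quantum integers $[\ell]$, which are scalars once $q$ is fixed. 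I expect the one genuinely delicate point to be the diagrammatic bookkeeping --- exactly which Temperley--Lieb terms survive uncappability, and with which signs and powers of $\alpha$ --- since once the factorization ``$B$-coefficient times $\rho_{n}$-eigenvalue'' is in place the rest is the quantum-number arithmetic of Lemma 5.2 and Proposition 5.3.
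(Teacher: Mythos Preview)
Your proposal is correct and follows essentially the same argument as the paper: the paper also acts with $x^{k}_{n,1}$ on $g^{\alpha}_{m,k}$, factors the result as $B^{k}_{m,n}(\alpha)\cdot\alpha(\rho_n)$, and then computes $\alpha(\rho_n)$ by expanding the bottom $f_n$ in Temperley--Lieb diagrams, identifying the same surviving cup-cap terms with the same Morrison coefficients and $\omega^{\pm1}$ (or $t$) factors that you describe. Your treatment of items (1), (2), and (4) likewise matches the paper's informal discussion preceding the lemma.
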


\begin{lemm} For $\delta\ge 2$, and $X_{k}$ as above, $\A_{k,k}$ separates the points of $X_{k}$.

\end{lemm}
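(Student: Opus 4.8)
The plan is to identify each point $P$ of the relevant space (here I write $X_k$ for $X_k$ when $\delta>2$ and for $Y_k$ when $\delta=2$) with the character $\chi_P$ of $\A_{k,k}$ obtained by letting $\A_{k,k}$ act on the cyclic vector $g^{P}_{m,k}$ of the irreducible lowest weight $m$ Hilbert $TL$-module with parameter $P$. By the classification of the characters of $\A_{k,k}$ established above (Lemmas 5.9 and 5.10), every character arises this way, and I will call the integer $m$ (with $m\le k$ and $k-m$ even) the \emph{weight} of $\chi_P$, which coincides with the lowest weight of the canonical extension of $\chi_P$ to $ATL$. It then suffices to find, for each pair $P\ne P'$, an element of the basis $B$ of $\A_{k,k}$ (Proposition 5.5) on which $\chi_P$ and $\chi_{P'}$ disagree, and I would organize this as a case split according to the two weights, using the evaluation formulas of the preceding lemma together with the fact that the scalars $B^{k}_{m,m}(\alpha)$ are nonnegative and vanish exactly for the parameters excluded in Proposition 5.4.

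Assume first that $\chi_P$ and $\chi_{P'}$ have the same weight $m$. If $m>0$, then $P=\omega$ and $P'=\omega'$ lie on the same circle component, and taking $n=m$ in part (3) of the preceding lemma gives $\chi_\omega(x^{k}_{m,1})=\omega\,B^{k}_{m,m}(\omega)$, since $[\tfrac{m-m}{2}]=0$. The real number $B^{k}_{m,m}(\omega)$ is strictly positive on the relevant parameter set: for $\delta>2$ this is part of Proposition 5.4, while for $\delta=2$ the weight-$m$ circle of $Y_k$ with $m<k$ is punctured precisely at the zeros $\{\pm1\}$ ($m$ even) or $\{\pm i\}$ ($m$ odd) of $B^{k}_{m,m}$, and $B^{k}_{k,k}\equiv 1$; hence $\chi_P(x^{k}_{m,1})=\chi_{P'}(x^{k}_{m,1})$ would give $B^{k}_{m,m}(\omega)=B^{k}_{m,m}(\omega')$ after comparing moduli, and then $\omega=\omega'$. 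If $m=0$, so $k$ is even and $P=t$, $P'=t'$ lie in $(-\delta,\delta)$, then the preceding lemma yields $\chi_t(x^{k}_{0,0})=B^{k}_{0,0}(t)>0$ and $\chi_t(x^{k}_{0,1})=t\,B^{k}_{0,0}(t)$, so agreement of $\chi_t$ and $\chi_{t'}$ on both elements forces $t=t'$ after dividing. The case $k=0$ is immediate from $\A_{0,0}\cong\C[\Irr]$, the character with parameter $t$ sending $[f_1]$ to $t$.

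Finally, suppose $\chi_P$ has weight $m_1$ and $\chi_{P'}$ has weight $m_2$ with $m_1<m_2$. The key step, which I expect to be the main point of the proof (the rest being bookkeeping with the preceding lemma), is that $\chi_{P'}$ annihilates every basis element of $\A_{k,k}$ of rank less than $m_2$. For this I would use that the span of the basis diagrams with at most $r$ through-strings is a two-sided ideal of $\A_{k,k}$, since composition of Temperley--Lieb diagrams cannot increase the number of through-strings, and that, read inside $ATL$, a basis element of rank $r$ factors as a composition passing through the weight-$r$ space of the canonical extension of $\chi_{P'}$; since that extension has lowest weight $m_2$, this space is $0$ whenever $r<m_2$, so the element acts as $0$. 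In particular $\chi_{P'}(x^{k}_{m_1,1})=0$, and $\chi_{P'}(x^{k}_{0,0})=0$ when $m_1=0$, whereas the computation of the equal-weight case gives $\chi_P(x^{k}_{m_1,1})=\omega\,B^{k}_{m_1,m_1}(\omega)\ne 0$ for $m_1>0$ and $\chi_P(x^{k}_{0,0})=B^{k}_{0,0}(t)>0$ for $m_1=0$; so that element separates $P$ from $P'$. This disposes of all pairs $P\ne P'$. Besides the filtration-and-factorization observation above, the only remaining care is the $\delta=2$ bookkeeping matching the deleted parameters of the circle components of $Y_k$ with the zeros of the corresponding $B^{k}_{m,m}$, which is exactly what Proposition 5.4 records.
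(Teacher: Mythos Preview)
Your proposal is correct and follows essentially the same approach as the paper: both arguments split into the equal-weight and distinct-weight cases, use the basis elements $x^{k}_{m,n}$ together with Lemma~5.11 and the positivity of $B^{k}_{m,m}$ on the admissible parameter sets, and rely on the fact that a rank-$m$ basis element annihilates a character of strictly larger weight. The only cosmetic differences are that you use $x^{k}_{m,1}$ alone (separating via moduli and then dividing) where the paper uses the pair $x^{k}_{m,0}$, $x^{k}_{m,1}$, and that you spell out the factorization-through-weight-$m$ reason for the vanishing, which the paper leaves implicit in the phrase ``since $x^{k}_{m,0}$ has rank $m$.''
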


\begin{proof}

For each pair of distinct characters $\alpha_{1}, \alpha_{2}\in X_{k}$, we must show that there exists $f\in \A_{k,k}$ such that $\alpha_{1}(f)\ne \alpha_{2}(f)$.

First consider the $k=0$ case.  Then $t(x^{0}_{0,1})=t$ separates all points in $[-\delta, \delta]$.  

Now suppose $k>0$.  If $\alpha_{1}$ and $\alpha_{2}$ correspond to different weights, assume without loss of generality that the weight of $\alpha_{1}$ is strictly less than the weight of $\alpha_{2}$.  Then suppose the weight of $\alpha_{1}$ is m. Then we pick the diagram $x^{k}_{m,0}$.  Then from the above proposition, we have that $\alpha_{1}(x^{k}_{m,0})=B^{k}_{m,m}(\alpha_{1})\ne 0$, while $\alpha_{2}(x^{k}_{m,0})=0$ since $x^{k}_{m,0}$ has rank $m$.  Thus we can separate characters of different weights, and only need to show that we can seperate characters of the same weight.

Consider the case when $\delta>2$, and $k$ even.

\ \ Suppose $\alpha_{1}, \alpha_{2} \in (-\delta, \delta)$.  Then we have $B^{k}_{0,0}(\alpha_{1})$, $B^{k}_{0,0}(\alpha_{2})\ne 0$, and thus if $\alpha_{1}(x^{k}_{0,0})=B^{k}_{0,0}(\alpha_{1})\ne B^{k}_{\alpha_{2}}=\alpha_{2}(x^{k}_{0,0})$ we are done.  If $B^{k}_{0,0}(\alpha_{1})=B^{k}_{0,0}(\alpha_{2})\ne 0$, then $\alpha_{1}(x^{k}_{0,1})=\alpha_{1}B^{k}_{0,0}(\alpha_{1})\ne \alpha_{2}B^{k}_{0,0}(\alpha_{2})=\alpha_{2}B^{k}_{0,0}(\alpha_{2})$.  Thus we can separate the weight $0$ characters with $\A_{k,k}$.

\ \ Now suppose $\alpha_{1}, \alpha_{2} \in X_{k}$ are of the same weight $m>0$ but $\alpha_1\ne \alpha_{2}$.  Then $\alpha_{1}(x^{k}_{m,0})=B^{k}_{m,m}(\alpha_{1})$, and $\alpha_{2}(x^{k}_{m,0})=B^{k}_{m,m}(\alpha_{2})$.  If $ B^{k}_{m,m}(\alpha_{1})\ne B^{k}_{m,m}(\alpha_{2})$ we are done.  Suppose these are equal.  They are not $0$ by Proposition 5.4 (2).    Then $\alpha_{1}(x^{k}_{m,1})=\alpha_{1}B^{k}_{m,m}(\alpha_{1})$ while $\alpha_{2}(x^{k}_{m,1})=\alpha_{2}B^{k}_{m,m}(\alpha_{2})$.  Since $\alpha_{1}\ne \alpha_{2}$ we are finished.

The other cases are the same.  For $\delta=2$, we simply remove the points in the domain where $B^{k}_{m,m}=0$, and the above proof applies.

\end{proof}

\medskip

Now, we know that $C^{*}(\A_{k,k})$ will be a unital ($f_{k}$ is the unit) abelian $C^{*}$-algebra thus it must be isomorphic to the continuous functions on some compact Hausdorff space.  Since the characters evaluated on $\A_{k,k}$ are simply polynomials in the parameters of $X_{k}$ (Lemma 5.11 (4)), away from the ``missing'' points ($\pm \delta$, and when $\delta=2$, the points corresponding to $\pm 1$ on the even circles and $\pm i$ on the odd circles), the topology on the set of characters precisely agrees with the natural topology on the spaces.  Let us now consider the case when $\delta>2$.  The only ``missing" points are $t=\pm \delta$.  In other words, since our character space is compact and the topology on $X_{k}$ as characters agrees with the natural topology on $(-\delta,\delta)$, if we have a sequence of characters $t_{n}\subseteq(-\delta,\delta)$, such that $t_{n}\rightarrow \pm \delta$, this sequence must be converging to some other character in $X_{k}$.  Thus to identify the topology on $X_{k}$ as the space of characters, we must identify which character such a sequence $t_{n}$ converges to.  It must live in $X_{k}$ since $X_{k}$ contains all characters.

\begin{lemm} Let $\delta>2$, and let $k=2n$ be even.  Let $\omega_{-1}$ be the point $-1\in S^{1}\subseteq X_{k}$ corresponding to the weight $2$ copy of $S^{1}$  and similarly, $\omega_{1}$ the point in the same circle corresponding to $1$. Then for any $f\in \A_{k,k}$, if $\{t_{n}\}\subseteq (-\delta,\delta)$ is a sequence such that $t_{n}\rightarrow \delta$,  $t_{n}(f)\rightarrow \omega_{-1}(f)$.  If $t_n\rightarrow -\delta$, then $t_{n}(f)\rightarrow \omega_{1}(f)$.
\end{lemm}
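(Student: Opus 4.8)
The plan is to check both asserted convergences on the basis $B=\{x^{k}_{m,j}\}$ of $\A_{k,k}$ supplied by Proposition 5.5; since any $f\in\A_{k,k}$ is a finite linear combination of these and characters are linear, this suffices. (Recall $[2]=q+q^{-1}=\delta$.) The rank-zero vectors $x^{k}_{0,j}$ are immediate: by Lemma 5.11(1), $t(x^{k}_{0,j})=t^{j}B^{k}_{0,0}(t)$ is a polynomial in $t$, and $B^{k}_{0,0}(\pm\delta)=0$ for $k>0$ — by Proposition 5.4(1), or directly since the $j=2$ factor in the recursion of Proposition 5.3(2) is $[2]^{2}-t^{2}[1]^{2}$ — so $t_{n}(x^{k}_{0,j})\to 0$; and since $x^{k}_{0,j}$ has rank $0<2$, the rank-versus-weight reasoning from the proof of Lemma 5.12 gives $\omega_{-1}(x^{k}_{0,j})=\omega_{1}(x^{k}_{0,j})=0$. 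So both limits agree here.

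For the vectors $x^{k}_{m,j}$ of rank $m\ge 2$ even, the key is the factorization, valid on a lowest-weight-$\ell$ vector, $x^{k}_{m,j}\,g^{\alpha}_{\ell,k}=B^{k}_{\ell,m}(\alpha)\,\alpha(\rho_{m})^{j}\,g^{\alpha}_{\ell,k}$ (the right side being $0$ when $\ell>m$). This is the $j=1$ computation carried out just before Lemma 5.11, with $\rho_{m}$ replaced by $\rho_{m}^{j}$, using that $\rho_{m}$ acts as the scalar $\alpha(\rho_{m})$ on the relevant lowest-weight-$\ell$ vector in weight space $m$. Specializing to the weight-$0$ representation with parameter $t$ (Lemma 5.11(2)) gives $t(x^{k}_{m,j})=B^{k}_{0,m}(t)\big((-1)^{m/2}t\,[m/2]/[m]\big)^{j}$, a polynomial in $t$; and to the weight-$2$ representation with parameter $\omega$ (Lemma 5.11(3)) gives $\omega(x^{k}_{m,j})=B^{k}_{2,m}(\omega)\,\omega(\rho_{m})^{j}$. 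Thus it remains only to prove two identities: (i) $\lim_{t\to\delta}(-1)^{m/2}t\,[m/2]/[m]=\omega_{-1}(\rho_{m})$, and (ii) $\lim_{t\to\delta}B^{k}_{0,m}(t)=B^{k}_{2,m}(-1)$, together with their $t\to-\delta$ analogues with $\omega_{-1}$ replaced by $\omega_{1}$.

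For (i), substitute $\omega=-1$ (resp. $\omega=1$) into the formula of Lemma 5.11(3) for $\omega(\rho_{m})$ (with weight $2$, rank $m$) and use $[\tfrac{m}{2}-1]+[\tfrac{m}{2}+1]=[2][\tfrac{m}{2}]=\delta[\tfrac{m}{2}]$ to collapse the two surviving terms; this yields $(-1)^{m/2}\delta[m/2]/[m]$ for $\omega=-1$ and its negative for $\omega=1$, which are exactly $\lim_{t\to\delta}$ and $\lim_{t\to-\delta}$ of the left side. For (ii), $B^{k}_{0,m}$ and $B^{k}_{2,m}$ are generated by the recursions of Proposition 5.3 from the common base $B^{m}_{0,m}=B^{m}_{2,m}=1$, so it suffices to match the one-step factors, i.e. to check for even $k$ that
\[[k]^{2}-\delta^{2}[k/2]^{2}=\big[\tfrac{k-2}{2}\big]\big[\tfrac{k+2}{2}\big]\big(q^{k}+q^{-k}-2\big);\]
by Lemma 5.2 with $m=2$ the right side equals $[k]^{2}-[\tfrac{k-2}{2}]^{2}-[\tfrac{k+2}{2}]^{2}-2[\tfrac{k-2}{2}][\tfrac{k+2}{2}]=[k]^{2}-\big([\tfrac{k-2}{2}]+[\tfrac{k+2}{2}]\big)^{2}=[k]^{2}-\delta^{2}[k/2]^{2}$. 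Since $B^{k}_{0,m}(t)$ depends only on $t^{2}$ and $B^{k}_{2,m}(\omega)$ only on $\omega^{2}+\omega^{-2}$, with $1^{2}+1^{-2}=(-1)^{2}+(-1)^{-2}=2$, the $t\to-\delta$ case reduces to the same identity. Hence $t_{n}(x^{k}_{m,j})$, polynomial in $t_{n}$, converges to $B^{k}_{2,m}(-1)\,\omega_{-1}(\rho_{m})^{j}=\omega_{-1}(x^{k}_{m,j})$ when $t_{n}\to\delta$ and to $\omega_{1}(x^{k}_{m,j})$ when $t_{n}\to-\delta$.

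The only step requiring diagrammatic care is the factorization $x^{k}_{m,j}g^{\alpha}_{\ell,k}=B^{k}_{\ell,m}(\alpha)\alpha(\rho_{m})^{j}g^{\alpha}_{\ell,k}$, which is nonetheless a routine extension of the $j=1$ computation already in the text; the rest is quantum-number bookkeeping resting on Lemma 5.2. I do not anticipate a real obstacle — the substance of the lemma is the identification of the limiting weight-$2$ parameter, and that is forced by the rotation eigenvalue $-t/[2]$ tending to $\mp1$ as $t\to\pm\delta$.
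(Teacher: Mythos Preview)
Your direct approach---verify convergence on every basis element $x^{k}_{m,j}$ rather than invoke compactness---is more ambitious than the paper's, and the computations you do carry out (the rank-$0$ case, identity (ii) matching the one-step recursion factors, identity (i) for the rotation eigenvalue) are correct and cleanly done. The problem lies in the factorization you assert for general $j$.

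You claim $x^{k}_{m,j}\,g^{\alpha}_{\ell,k}=B^{k}_{\ell,m}(\alpha)\,\alpha(\rho_{m})^{j}\,g^{\alpha}_{\ell,k}$, justified by ``$\rho_{m}$ acts as the scalar $\alpha(\rho_{m})$ on the relevant lowest-weight-$\ell$ vector in weight space $m$.'' But when $\ell<m$ the weight-$m$ space $V^{\alpha}_{m}$ is not one-dimensional, and $\rho_{m}\in ATL_{m,m}$ does \emph{not} preserve the one-dimensional subspace $f_{m}V^{\alpha}_{m}$ spanned by $g^{\alpha}_{\ell,m}$. Concretely, for $\ell=0$, $m=2$ one has $g^{t}_{0,2}=\text{(around-arc)}-\tfrac{t}{\delta}\text{(cup)}$ and $\rho_{2}\,g^{t}_{0,2}=\text{(cup)}-\tfrac{t}{\delta}\text{(around-arc)}$, which is proportional to $g^{t}_{0,2}$ only when $t=\pm\delta$. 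What \emph{is} true is that $x^{m}_{m,j}=f_{m}\rho_{m}^{j}f_{m}\in\A_{m,m}$ acts as a scalar (since $\A_{m,m}$ is abelian and $g^{\alpha}_{\ell,m}$ supports a character), giving the correct factorization $\alpha(x^{k}_{m,j})=B^{k}_{\ell,m}(\alpha)\cdot\alpha_{m}(x^{m}_{m,j})$ with $\alpha_{m}$ the induced character on $\A_{m,m}$. But $f_{m}\rho_{m}^{j}f_{m}\neq(f_{m}\rho_{m}f_{m})^{j}$, so this scalar is not $\alpha(\rho_{m})^{j}$, and your formula $t(x^{k}_{m,j})=B^{k}_{0,m}(t)\big((-1)^{m/2}t[m/2]/[m]\big)^{j}$ fails already for $m=2$, $j=2$.

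The paper sidesteps this entirely. It uses that the character space is compact (being the spectrum of a unital abelian $C^{*}$-algebra), so a sequence $t_{n}\to\pm\delta$ must converge to \emph{some} character in $X_{k}$; it then identifies that character using only the separating elements $x^{k}_{m,0}$ and $x^{k}_{m,1}$ from Lemma~5.12, for which Lemma~5.11 gives closed formulas. Your identities (i) and (ii) are exactly what is needed for the $j=0,1$ comparison---indeed (ii) is a sharper version of the paper's $B^{k}_{0,2}\to B^{k}_{2,2}(\pm1)$ computation---so if you add the compactness step and restrict to $j\in\{0,1\}$, your argument becomes complete and is in fact a slight refinement of the paper's. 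Alternatively, your factorization could be salvaged by proving $\lim_{t\to\delta}t_{m}(x^{m}_{m,j})=(\omega_{-1})_{m}(x^{m}_{m,j})$ directly, e.g.\ by induction on $m$, but this is not the ``routine extension'' you describe.
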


\begin{proof} First from the list of coefficients above $B^{k}_{0,0}\rightarrow 0$ as $t_{j}\rightarrow \pm \delta$, , and thus $t_{n}(x^{k}_{0,j})=t^{j}B^{k}_{0,0}\rightarrow 0$ $t_{j}\rightarrow \pm \delta$, $t_{j}(x_{l})\rightarrow 0$ for all $l$.  Thus the limit of $t_{j}$ must be some higher weight character.  We see that $$B^{k}_{0,2}(t_{j})=\prod_{1\le i\le \frac{k}{2}}\frac{[2i]^{2}-t^{2}_{j}[i]^{2}}{[2i-1][2i]}\rightarrow \prod_{1\le i\le \frac{k}{2}}\frac{[2i]^{2}-[2]^{2}[i]^{2}}{[2i-1][2i]}$$  On the other hand using our formula for the $B$'s and Lemma 5.2, $$B^{k}_{2,2}(\pm1)=\prod_{i=2}^{n}\frac{[2i]^{2}-[i-1]^{2}-[i+1]^{2}-2[i+1][i-1]}{[2i][2i-1]}$$

\ \ Using the fact that $[2][i]=[i+1]+[i-1]$, and comparing each term in the product with the same denominator, we see that the term in the limit of the $t_{j}$ is 
$[2i]-[2]^{2}[i]^{2}=[2i]-([i+1]+[i-1])^{2}=[2i]-[i+1]^{2}-[i-1]^{2}-2[i+1][i-1]$, which is precisely the term in $B^{k}_{2,2}(\pm1)$.  Therefore we see that $\lim\ t_{n}$ must be a lowest weight $2$ character, and it must be $\omega_{\pm}$.  The problem is, we do not know which it is.  To determine this, we notice that $\alpha(x^{k}_{2,1})=\alpha B^{k}_{0,2}$.

\ \  Therefore, as $t_{n}\rightarrow \delta$, $t_{n}(x^{k}_{2,1})\rightarrow -B^{k}_{0,2}(-1)=\omega_{-1}(x^{k}_{2,1})$.  Since $x^{k}_{2,1}$ separates points, we see that $\lim_{t\rightarrow \delta} t=\omega_{-1}$.  Similarly, $\lim _{t\rightarrow -\delta} t=\omega_{1}.$

\end{proof}

\medskip

\begin{lemm} Let $k>0$, $\delta=2$.  
\begin{enumerate}
\item
Suppose $k$ is even.  Let $\omega_{\pm 1}$ be the characters on the weight $k$ circle.  If $\omega_{n}$ is a subset of the lowest weight $m$ circle for some $m\le k$ such that $\omega_{n}\rightarrow \pm 1$, then $\omega_{n}(f)\rightarrow \omega_{\pm (-1)^{\frac{k-m}{2}}}(f)$.
\item
Let $k$ be odd and $\omega_{\pm i}$ be the characters on the weight $k$ circle corresponding to $\pm i$.  If $\omega_{n}$ is a subset of a weight $m$ circle for some $m\le k$ such that $\omega_{n}\rightarrow \pm i$, then $\omega_{n}(f)\rightarrow \omega_{\mp (-1)^{\frac{m-2}{2}} i}$.
\end{enumerate}

\end{lemm}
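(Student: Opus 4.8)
The plan is to follow the structure of the proof of Lemma 5.13. Since $\A_{k,k}$ is spanned by the basis $B=\{x^{k}_{r,s}\}$ of Proposition 5.5 (here $r$ is the rank of the diagram, which has the same parity as $k$), and a character is determined by its values on a basis, it suffices to prove $\omega_{n}(x^{k}_{r,s})\to\omega_{\mu}(x^{k}_{r,s})$ for every basis element, where $\omega_{\mu}$ denotes the asserted limiting character on the weight $k$ circle. Note that $\omega_{\mu}$ is a lowest weight $k$ representation, so among weights $\le k$ its only nonzero weight space is the weight $k$ space, on which $\rho_{k}$ acts by the scalar $\mu$. The argument splits into the regime $r<k$, where both sides vanish in the limit, and the regime $r=k$, where the value of $\mu$ is extracted.

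First I would specialise everything to $q=1$, so that $[j]=j$. Iterating the recursion of Proposition 5.3(1) (for $k$ even) or Proposition 5.3(3) (for $k$ odd) from the first index $k$ down to $r$, where $B^{r}_{m,r}=1$, gives
\[ B^{k}_{m,r}(\omega)=\left(\prod_{i=1}^{(k-r)/2}\frac{\bigl[\tfrac{r+2i-m}{2}\bigr]\bigl[\tfrac{r+2i+m}{2}\bigr]}{[r+2i]\,[r+2i-1]}\right)C(\omega)^{(k-r)/2}, \]
where $C(\omega)=2-\omega^{2}-\omega^{-2}$ if $k$ is even and $C(\omega)=2+\omega^{2}+\omega^{-2}$ if $k$ is odd. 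The first vanishes exactly at $\omega=\pm1$ and the second exactly at $\omega=\pm i$ — precisely the points deleted from the weight $m$ circle of $Y_{k}$ by Proposition 5.4(2) — so $B^{k}_{m,r}(\omega_{n})\to0$ whenever $r<k$, while $B^{k}_{m,k}\equiv1$.

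Next I would run through the basis elements. If $r<m$, then $x^{k}_{r,s}$ has rank below the lowest weight $m$ of $\omega_{n}$, so the rank argument from the proof of Lemma 5.12 (a rank $r$ element of $ATL_{k,k}$ factors through the vanishing weight $r$ space of a lowest weight $m$ module) gives $\omega_{n}(x^{k}_{r,s})=0$, and likewise $\omega_{\mu}(x^{k}_{r,s})=0$ since $\omega_{\mu}$ has weight $k>r$. If $m\le r<k$, then by Lemma 5.11, upgraded from $\omega(x^{k}_{r,1})=B^{k}_{m,r}(\omega)\,\omega(\rho_{r})$ to $\omega(x^{k}_{r,s})=B^{k}_{m,r}(\omega)\,\omega(\rho_{r})^{s}$ by replacing the central $\rho_{r}$ with $\rho_{r}^{s}$ (using multiplicativity of $\omega$ on $\A_{r,r}$), one has $\omega_{n}(x^{k}_{r,s})=B^{k}_{m,r}(\omega_{n})\,\omega_{n}(\rho_{r})^{s}$; since $|\omega_{n}(\rho_{r})|\le1$ (clear from the $q=1$ eigenvalue formula) and $B^{k}_{m,r}(\omega_{n})\to0$, this tends to $0$, and again $\omega_{\mu}(x^{k}_{r,s})=0$ by rank. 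Finally, for $r=k$ we have $x^{k}_{k,s}=\rho_{k}^{s}$ and $B^{k}_{m,k}(\omega_{n})=1$, hence $\omega_{n}(x^{k}_{k,s})=\omega_{n}(\rho_{k})^{s}$; here I would use the explicit eigenvalue of $\rho_{k}$ on $g^{\omega_{n}}_{m,k}$ from the discussion preceding Lemma 5.11, which at $q=1$ is $\omega_{n}(\rho_{k})=\frac{(-1)^{(m-k)/2}}{k}\big((-1)^{k}\tfrac{k-m}{2}\,\omega_{n}^{-1}+\tfrac{k+m}{2}\,\omega_{n}\big)$ for $k>m$ (and $=\omega_{n}$ for $k=m$). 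Taking limits, for $k$ even with $\omega_{n}\to\pm1$ this gives $\omega_{n}(\rho_{k})\to\pm(-1)^{(k-m)/2}$, which is exactly the rotation scalar of $\omega_{\pm(-1)^{(k-m)/2}}$; for $k$ odd, using $\omega_{n}^{-1}\to\mp i$ as $\omega_{n}\to\pm i$, it gives $\omega_{n}(\rho_{k})\to\pm i\,(-1)^{(k-m)/2}$, identifying the corresponding point of the weight $k$ circle. Therefore $\omega_{n}(x^{k}_{k,s})=\omega_{n}(\rho_{k})^{s}\to\mu^{s}=\omega_{\mu}(x^{k}_{k,s})$, and summing over $B$ finishes the proof.

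The individual steps are routine quantum-number calculations; the two that need a little care are the extension of Lemma 5.11 from rotation power $1$ to power $s$ (immediate, since only the central sub-diagram changes) and the $(-1)$-bookkeeping in $\lim_{n}\omega_{n}(\rho_{k})$ — this is the one substantive point, as it determines which point of the weight $k$ circle is the limit, hence which identification $Y_{k}$ encodes. Combined with Lemma 5.10 (which gives $Y_{k}$ as a set) and the fact that, away from the deleted points, character evaluation is polynomial in the parameter (Lemma 5.11(4)), this identifies the topology of the character space of $\A_{k,k}$ and yields Theorem 5.8.
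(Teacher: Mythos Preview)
Your proof is correct and follows essentially the same route as the paper: both arguments use the vanishing of the $B^{k}_{m,r}$ coefficients at the missing points to show that the limit of $\omega_n$ must be a lowest weight $k$ character, and then identify which one by computing $\lim_n \omega_n(\rho_k)$ from the explicit $q=1$ rotation eigenvalue formula. The paper is terser (it evaluates only $x^{k}_{k,1}$ and invokes the fact that this element separates lowest weight $k$ characters, rather than checking all $x^{k}_{r,s}$), but the substance is identical; note also that your odd-case limit $\pm i\,(-1)^{(k-m)/2}$ differs in sign from what the paper records, and the exponent $(m-2)/2$ in the paper's statement appears to be a typo --- your computation is the correct one.
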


\begin{proof} If $\omega_{n}\rightarrow \pm 1$ by examining coefficients, we see that $B^{k}_{m, n}(\omega_{n})\rightarrow 0$.  Since this coefficient occurs in the evaluation of $\omega_{n}(x)$ for all diagrams of rank $<k$, we see that $\omega_{n}$ must be converging to a lowest weight $k$ character.  To determine which one, we note that $x^{k}_{k,1}=\rho_{n}$, and compute $\omega_{n}(x^{k}_{k,1})=(-1)^{\frac{k-m}{2}}\frac{1}{k}\left( (-1)^{k} \frac{k-m}{2}\omega^{-1}_{n}+\frac{k+m}{2}\omega_{n}\right)$.

\ \ If $k$ is even, then as $\omega_{n}\rightarrow\pm 1$, $\omega_{n}(x^{k}_{k,1})\rightarrow \pm (-1)^{\frac{k-m}{2}}$.  Since $x^{k}_{k,1}$ separates lowest weight $k$ representations, we are done.

\ \ If $k$ is odd, then as $\omega_{n}\rightarrow \pm i$, $\omega_{n}(x^{k}_{k,1})\rightarrow \mp (-1)^{\frac{k-m}{2}} i$.  Since $x^{k}_{k,1}$ separates lowest weight $k$ representations, we are done.
\end{proof}

\textit{Proof of Theorems 5.7 and 5.8 }
The above lemmas have identified the appropriate topology on the sets $X_{k}$and $Y_{k}$, and it agrees with the pictures we have drawn. 

 In particular, consider $\delta>2$.  For $k$ odd, we don't even need the lemmas, since there are no ``missing" points.  For $k$ even, we must identify the points $\pm \delta$ on the interval $[-\delta,\delta]$ with the points $\mp 1$ respectively, on the weight $2$ circle.  Thus we have that $C^{*}(\A_{k,k})$ is an abelian $C^{*}$ algebra whose spectrum is the compact Hausdorff space $X_{k}$ .  

Now assume $\delta=2$.  For $k$ even, by the above lemma, the weight $m$ circle for $m>0$ will be glued on to the weight $k$ circle at the points $\pm 1$, and it alternates which endpoint goes to which endpoint on the circle as $\frac{k-m}{2}$ changes parity. We know by the above lemma that the interval is glued with its endpoints attached to the points $\pm 1$ on the weight $2$ circle which in turn is glued to the points $\pm1$ on the weight $k$ circle, resulting in the space pictured as $Y_{k}$.  For $k$ odd, we glue the points $\pm i$ to the highest weight circle in an alternating fashion as described in the above lemma.  Topologically, we obtain the space $Y_{k}$ pictured $\square$.
 
\medskip

\medskip

\textbf{Remark.}   It would be interesting to understand the tensor structure on $Rep(ATL)$ in terms of this topological characterization.  We also have not characterized $\A_{k,m}$ for $m\ne n$.  We notice that its completion is a Hilbert $C^{*}$ bimodule over $C(X_{m})-C(X_{k})$, and thus $\A_{k,m}$ should have some characterization as vector fields on $X_{k}\times X_{m}$, which may be interesting.  Understanding this structure will help determine the tensor structure on the category $Rep(ATL)$.

\end{subsection}

\end{section}

\begin{section}{Approximation and Rigidity Properties}

\begin{subsection}{Weight $0$ representations and analytical properties.}

In a remarkable paper \cite{PV}, Popa and Vaes defined representation theory for rigid $C^{*}$-tensor categories. They introduce the concept of cp-multipliers for $\ca$, which are a class of functions in $\ell^{\infty}(\Irr)$.  Normalizing these functions provide positive linear functionals on the fusion algebra $\Fus$.  A \textit{admissible representation} of $\Fus$ is a $*$-representation such that every vector state is a certain normalization of a cp-multiplier (see Definition 6.4).  The class of admissible representations of the fusion algebra provides a good notion for the representation theory for $\ca$, generalizing unitary representations of a discrete group if $\ca$ is equivalent to $G-Vec$.  In this context, they define approximation and rigidity properties, generalizing the definitions from the world of discrete groups.  They show that if $\ca$ is equivalent to the category  of $M$-$M$ bimodules in the standard invariant of a finite index inclusion $N\subseteq M$ of $II_{1}$ factors, then the definitions of approximation and rigidity properties given via cp-multipliers are equivalent to the definitions defined via the symmetric enveloping algebra for the subfactor $N\subseteq M$ given by Popa.

We will show in this section that admissible representations of the fusion algebra in the sense of Popa and Vaes exactly coincide with weight $0$ admissible representations of $\AC$.  Thus the admissible representation theory of Popa and Vaes is the restriction of ordinary representation theory of the tube algebra.  In a recent paper of Neshveyev and Yamashita \cite{NY2}, given an object of $Z(\text{ind-}\ca)$ they construct a representation of the fusion algebra.  They then show that the class of representations of the fusion algebra that arises in this way is exactly the class identified by Popa and Vaes.  Thus the equivalence of $Z(\text{ind-}\ca)$ and $Rep(\AC)$ observed by Vaes implies our result, however we feel the direct proof given here is instructive.

We now assume for the rest of the paper that $\W$ contains the strict tensor unit indexed by $0$, so that $X_{0}=id$.  From Proposition 3.1 we see that $\AW_{0,0}$ is $*$-isomorphic to the fusion algebra of $\ca$.  If $\phi$ is a function on $\Irr$, it defines a functional on $\Fus$ by sending $f=\sum_{k} f_{k} \in \AW_{0,0}$ (where $f_{k}\in \AW^{k}_{0,0}$) to 

$$\phi(f)=\sum_{k}\frac{\phi(X_k)}{d(X_k)}\grc{dx1}$$

\ \ This is because $f_{k}$ is really a scalar times the single string labeled $X_k$.  Now since any annular algebra has $\AW_{0,0}\cong \Fus $, we can naturally identify the algebraic duals $\widehat{(\AC_{0,0})}$ and $\widehat{(\AW_{0,0})}$, both as functions $\phi:\Irr \rightarrow \mathbb{C}$.  Recall from Definition 4.3 that $\Phi_{k}$ denotes the set of weight $k$ annular states on $\AC_{k,k}$, while $\Phi \W_{k}$ denotes the weight $k$ annular states on $\AW_{k,k}$.  We have the following lemma:

\begin{lemm} If $\phi:\Irr \rightarrow \mathbb{C}$, then for any full $\W$, $\phi\in \Phi\W_{0}$ if and only if $\phi\in \Phi_{0}$.
\end{lemm}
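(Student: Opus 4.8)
The plan is to compare the two positivity conditions directly, using the explicit $*$-algebra decomposition established in the proof of Proposition 3.5. Recall that this proof produces, for each $j\in\Irr$ and $m,n\in\W$, a $*$-isomorphism $\AW^{j}_{m,n}\cong\bigoplus_{s,t\in\Irr}M_{B_{t,n}\times B_{s,m}}(\C)\otimes\AC^{j}_{s,t}$, where $B_{s,m}$ denotes an orthonormal basis of $Mor(X_{s},Y_{m})$, and this identification is compatible with annular multiplication and with $\#$. The one elementary input needed is that $Mor(X_{s},Y_{0})=Mor(X_{s},id)$ vanishes unless $s=0$, in which case it is one-dimensional with orthonormal basis $\{1_{id}\}$. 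Consequently all weight-$0$ matrix factors collapse: the identification $\AW_{0,0}\cong\AC_{0,0}$ coming from Proposition 3.5 is the identity on the spaces $Mor(X_{j}\otimes id,\ id\otimes X_{j})$ (hence coincides with the canonical identification of both algebras with $\Fus$ from Proposition 3.1, carries $p_{0}$ to $p_{0}$, and intertwines the two functionals on $\AC_{0,0}$ and $\AW_{0,0}$ induced by a fixed $\phi\colon\Irr\to\C$), while for each $i\in\W$ one gets $\AW^{j}_{0,i}\cong\bigoplus_{t\in\Irr}M_{B_{t,i}\times\{*\}}(\C)\otimes\AC^{j}_{0,t}$, sending $g\in\AW^{j}_{0,i}$ to $\sum_{t}\sum_{V\in B_{t,i}}E_{V,*}\otimes g_{V}$, where $g_{V}:=(V^{*}\otimes 1_{j})g\in\AC^{j}_{0,t}$.

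Since $p_{0}$ corresponds to $p_{0}$, the normalization $\phi(p_{0})=1$ in Definition 4.3(1) is literally the same requirement on $\AW_{0,0}$ and on $\AC_{0,0}$, so only the positivity in Definition 4.3(2) needs comparison. For the implication $\phi\in\Phi_{0}\Rightarrow\phi\in\Phi\W_{0}$: fix $i\in\W$ and $g\in\AW_{0,i}$, write $g=\sum_{j}g^{j}$ with $g^{j}\in\AW^{j}_{0,i}$, and set $g_{V}:=\sum_{j}(V^{*}\otimes 1_{j})g^{j}\in\AC_{0,t}$ for $V\in B_{t,i}$. Because multiplication and $\#$ pass through the decomposition above, a direct computation in $\AC_{0,0}=\AW_{0,0}$ gives $g^{\#}\cdot g=\sum_{t\in\Irr}\sum_{V\in B_{t,i}}g_{V}^{\#}\cdot g_{V}$: the matrix-unit factors satisfy $E_{*,V}E_{V',*}=\delta_{V,V'}E_{*,*}=\delta_{V,V'}1$, and the cross terms with $t\ne t'$ vanish because $p_{t}p_{t'}=0$. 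Each summand $g_{V}^{\#}\cdot g_{V}$ with $g_{V}\in\AC_{0,t}$ satisfies $\phi(g_{V}^{\#}\cdot g_{V})\ge 0$ since $\phi\in\Phi_{0}$, whence $\phi(g^{\#}\cdot g)\ge 0$; as $i\in\W$ was arbitrary, $\phi\in\Phi\W_{0}$. (This direction uses only that $0\in\W$.)

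For the converse $\phi\in\Phi\W_{0}\Rightarrow\phi\in\Phi_{0}$: let $m\in\Irr$ and $h=\sum_{j}h^{j}\in\AC_{0,m}$ with $h^{j}\in\AC^{j}_{0,m}$. Since $\W$ is full there is $i\in\W$ with $X_{m}\prec Y_{i}$; fix an isometry $V_{0}\in Mor(X_{m},Y_{i})$ (so $V_{0}^{*}V_{0}=1_{m}$, hence $V_{0}$ is a unit vector, which we extend to an orthonormal basis $B_{m,i}$), and set $g:=\sum_{j}(V_{0}\otimes 1_{j})h^{j}\in\AW_{0,i}$. In the decomposition of the previous paragraph one computes $g_{V_{0}}=h$ and $g_{V}=0$ for every other basis vector $V$, because $V^{*}V_{0}\in Mor(X_{m},X_{t})$ is $0$ unless $V=V_{0}$. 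Hence $g^{\#}\cdot g=h^{\#}\cdot h$ as elements of $\AC_{0,0}$, so $\phi(h^{\#}\cdot h)=\phi(g^{\#}\cdot g)\ge 0$ because $\phi\in\Phi\W_{0}$; therefore $\phi\in\Phi_{0}$.

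The only part requiring genuine care is the bookkeeping behind the collapsed decomposition in the first paragraph — checking that the Proposition 3.5 identification of the weight-$0$ corners is the canonical one and that $g^{\#}\cdot g=\sum_{t,V}g_{V}^{\#}\cdot g_{V}$ holds on the nose — but this is exactly the assertion, already made in the proof of Proposition 3.5, that annular multiplication and $\#$ correspond to the matrix and tube-algebra operations, specialized to the degenerate weight-$0$ matrix factors, so no new argument is needed. Alternatively one could argue representation-theoretically: by the GNS-type reconstruction (Corollary 4.5) an element of $\Phi_{0}$, resp.\ $\Phi\W_{0}$, is precisely a weight-$0$ vector state of a representation of $\AC$, resp.\ $\AW$, and Theorem 4.2 identifies these representation categories; the computation above is simply the unwinding of that equivalence at the level of states.
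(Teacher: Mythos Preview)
Your proof is correct and follows essentially the same approach as the paper's: both directions rest on the decomposition of $Y_i$ into simples and the resulting identity $g^{\#}\cdot g=\sum_{t,V}g_V^{\#}\cdot g_V$, with the paper writing this formula down directly and you deriving it from the matrix-unit picture of Proposition~3.5. Your version is more explicit (in particular you spell out the embedding $h\mapsto (V_0\otimes 1)h$ that the paper simply alludes to as ``embed $\AC$ as a subalgebra of $\AW$''), and your remark that $\Phi_0\Rightarrow\Phi\W_0$ needs only $0\in\W$ while the converse uses fullness is a nice clarification, but the underlying argument is the same.
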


\begin{proof} Since we can embed $\AC$ as a subalgebra of $\AW$ as in the proof of Proposition 3.5, it is clear that $\phi\in \Phi\W_{0}\Rightarrow \phi\in \Phi_{0}$.  For the converse, suppose $\phi\in \Phi_{0}$. Let $f=\sum_{k} f^{k}_{0,m}\in \AW_{0,m}$, where $f^{k}_{0,m}\in \AW^{k}_{0,m}$.  Then we have 

$$\phi(f^{\#}\cdot f)= \sum_{j\in \Irr} \sum_{V\in onb(m, X_{j})} \phi(\left(f^{\#}(1\otimes V^{*})\right)\cdot \left((V\otimes 1)f\right))$$

But each $(V\otimes 1)f\in \AC_{0,j}$, and thus each term in the right hand sum is positive.  Therefore $\phi(f^{\#}\cdot f)\ge 0$ for all $m\in \W$.

\end{proof}

\begin{lemm} If $\phi: \Irr \rightarrow \C$.  Define $\phi^{op}:\Irr \rightarrow \C$  by $\phi^{op}(X_k)=\phi(\overline{X}_{k})$.  Then $\phi$ is an annular state if and only if $\phi^{op}$ is an annular state.

\end{lemm}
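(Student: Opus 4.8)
The plan is to reduce the statement to the elementary fact that a state on a unital $*$-algebra is automatically Hermitian, applied to the fusion algebra $\Fus\cong\AC_{0,0}$.

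First I would note that the positivity requirement for a weight $0$ annular state $\phi$, namely $\phi(f^{\#}\cdot f)\ge 0$ for all $f\in\AC_{0,m}$ and all $m$, in particular holds for $m=0$. Under the $*$-isomorphism $\AC_{0,0}\cong\Fus$ of Proposition 3.1 — which sends $p_{0}$ to the unit $[X_{0}]$ and the involution $\#$ to the fusion-algebra involution $*$, and under which $\phi$ becomes the functional $[X_{k}]\mapsto\phi(X_{k})$ (using the formula for $\phi$ on $\AC_{0,0}$ recorded just before Lemma 6.1) — this says exactly that $\phi$ restricts to a state $\hat\phi$ on the unital $*$-algebra $\Fus$.

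Next I would invoke the standard fact that a state $\hat\phi$ on a unital $*$-algebra is Hermitian, i.e. $\hat\phi(g^{*})=\overline{\hat\phi(g)}$ for all $g$; this follows by expanding $0\le\hat\phi\bigl((\lambda 1+g)^{*}(\lambda 1+g)\bigr)$ and comparing the real parts for $\lambda=1$ and $\lambda=i$. Taking $g=[X_{k}]$ and using $[X_{k}]^{*}=[\overline{X}_{k}]$ gives $\phi(\overline{X}_{k})=\overline{\phi(X_{k})}$ for every $k$; in other words, once $\phi$ is known to be an annular state, the reindexed function $\phi^{op}$ coincides with the complex conjugate $\overline{\phi}$. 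This identification is the one genuinely nontrivial point: on an arbitrary function $\Irr\to\C$ the operations $\phi\mapsto\phi^{op}$ and $\phi\mapsto\overline{\phi}$ are unrelated, and they agree only on the positive cone, so one must be careful to use positivity before asserting it.

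Finally I would check that $\overline{\phi}$ is an annular state whenever $\phi$ is, which is immediate: $\overline{\phi}(p_{0})=\overline{1}=1$, and for any $f\in\AC_{0,m}$ the scalar $\phi(f^{\#}\cdot f)$ is a nonnegative real number, so $\overline{\phi}(f^{\#}\cdot f)=\overline{\phi(f^{\#}\cdot f)}=\phi(f^{\#}\cdot f)\ge 0$. Combining the last two steps, $\phi\in\Phi_{0}\Rightarrow\phi^{op}=\overline{\phi}\in\Phi_{0}$. The converse is obtained by applying this to $\phi^{op}$ in place of $\phi$, since the chosen spherical structure gives $\overline{\overline{X}_{k}}=X_{k}$ and hence $(\phi^{op})^{op}=\phi$; and by Lemma 6.1 the same equivalence then holds with $\Phi_{0}$ replaced by $\Phi\W_{0}$ for any full weight set $\W$. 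I do not expect any real obstacle here beyond keeping the bookkeeping of the $\AC_{0,0}\cong\Fus$ identification straight.
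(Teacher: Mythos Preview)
Your reduction to the Hermitian property of states is fine up through the identification $\phi^{op}=\overline{\phi}$ on $\Irr$, but the final positivity step contains a genuine error. You assert
\[
\overline{\phi}(f^{\#}\cdot f)=\overline{\phi(f^{\#}\cdot f)},
\]
but the left side is the \emph{linear} functional $\overline{\phi}$ (defined on basis elements by $[X_k]\mapsto\overline{\phi(X_k)}$ and extended linearly) evaluated at $f^{\#}\cdot f$, whereas the right side is the complex conjugate of the scalar $\phi(f^{\#}\cdot f)$. Writing $f^{\#}\cdot f=\sum_k a_k[X_k]$, these are $\sum_k a_k\,\overline{\phi(X_k)}$ and $\sum_k \overline{a_k}\,\overline{\phi(X_k)}$ respectively, which agree only when the coefficients $a_k$ are real. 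In general they are not: already for $\ca=\mathbb{Z}\text{-}Vec$, taking $g=1+i\,t\in\C[\mathbb{Z}]$ gives $g^*g=2+it-it^{-1}$, and with the annular state $\phi(n)=e^{in\theta}$ one computes $\overline{\phi}(g^*g)=2+2\sin\theta$ while $\overline{\phi(g^*g)}=2-2\sin\theta$.

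What is actually needed is to exhibit $\sum_k a_k[\overline{X}_k]$ as $h^{\#}\cdot h$ for some $h$ in the tube algebra, and this is precisely what the paper's proof does: it uses the conjugate-linear anti-automorphism $r:\AC\to\AC$, $r(f)=\overline f$, to write $\phi^{op}(f^{\#}\cdot f)=\phi\bigl(r(f)\cdot r(f)^{\#}\bigr)=\phi\bigl((r(f)^{\#})^{\#}\cdot r(f)^{\#}\bigr)\ge 0$. In the group case this $r$ is exactly the linear map $k\mapsto k^{-1}$ on $\C[G]$, and the computation above shows that even there one cannot avoid it. So your argument is missing the one structural ingredient the lemma genuinely requires.
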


\begin{proof}  We only need to check the positivity condition.  Suppose $\phi\in \AC_{0,0}$.  Recall the map $r:\AC\rightarrow \AC$ introduced in the proof of Proposition 5.6, defined for $f\in \AC^{k}_{i,j}:=Mor(X_{k}\otimes X_{i}, X_{j}\otimes X_{k}$ by $r(f)=\overline{f}\in \AC^{\overline{k}}_{\overline{j}, \overline{i}}$.  $r$ is an anti-isomorphism with respect to annular multiplication. Then if $f\in \AW_{0,m}$, $\phi^{op}(f^{\#}\cdot f)=\phi(r(f^{\#}\cdot f))=\phi(r(f)\cdot r(f)^{\#})$. 
\end{proof}

\ \  Now we recall several definitions from \cite{PV}.  Let $\mathcal{C}$ be a rigid $C^{*}$-tensor category and let $Irr(\mathcal{C})$ be the set of simple objects.  

\begin{defi}\ A \textit{multiplier} on a rigid $C^{*}$ tensor category is a family of linear maps $\Theta_{\alpha, \beta}: End(\alpha\otimes \beta)\rightarrow End(\alpha\otimes \beta)\ \text{for all}\ \alpha, \beta\in Obj(\mathcal{C)}$ such that
\begin{enumerate}
\item
Each $\Theta_{\alpha,\beta}$ is $End(\alpha)\otimes End(\beta)$-bimodular 
\item
$\Theta_{\alpha_{1}\otimes \alpha_{2}, \beta_{1}\otimes \beta_{2}}(1\otimes X\otimes 1)=1\otimes\Theta_{\alpha_{2}, \beta_{1}}(X)\otimes 1\ \text{for all}\ \alpha_{i},\beta_{i}\in \mathcal{C}, X\in End(\alpha_{2}\otimes \beta_{2})$
\end{enumerate}

A multiplier is a \textit{cp-multiplier} if each $\Theta_{\alpha, \beta}$ is completely positive. 

\end{defi}

In \cite{PV}, Proposition 3.6, it is shown that multipliers are in one-one correspondence with functions $\phi: \Irr \rightarrow \C$.  For such a $\phi$, we define a multiplier  $\Theta^{\phi}_{\alpha, \beta}$ as follows:

For an object $\alpha\in \mathcal{C}$, and for $k\in \Irr$ with $X_{k}\prec \alpha \overline{\alpha}$, define the central projection in $End(\alpha\otimes\overline{\alpha})$

$$P^{k}_{\alpha \overline{\alpha}}:=\sum_{W\in onb(\alpha \overline{\alpha}, X_{k})} W^{*}W$$

Then for $x\in End(\alpha \otimes \beta)$, 

$$\Theta^{\phi}_{\alpha, \beta}(x)=\sum_{k\in \Irr} \phi(X_k) \grc{multiplier21}=\sum_{k\in \Irr}\phi(X_k)\grc{multiplier31}$$

  We note this sum is finite.  In the above pictures, we apply our conventions for horizontal strings from section 3 locally.  Popa and Vaes show every multiplier is of this form.  We abuse notation, and say a function $\phi:\Irr \rightarrow \mathbb{C}$ is a cp-multiplier if $\Theta^{\phi}$ is a cp-multiplier.  It is shown in \cite{PV} that if $\phi:\Irr \rightarrow \mathbb{C}$ is a $cp$-multiplier, then $d(.)\phi(.):\mathbb{C}[\Irr] \rightarrow \mathbb{C}$ is a state on the fusion algebra.  
 
\begin{defi} \begin{enumerate}
\item
A function $\phi: \Irr \rightarrow \mathbb{C}$ is called an \textit{admissible state} if $\frac{\phi(.)}{d(.)}$ is a cp-multiplier.
\item
A (non-degenerate) $*$-representation $\pi$ of $\AW_{0,0}\cong \C[\Irr]$ is called \textit{admissible} if every vector state in the representation is admissible.
\item
Define $\displaystyle \| \ \|_{u}:=\sup_{\pi\ \text{admissible}} \| \ \|_{\pi}$ on $\AW_{0,0}\cong \Fus $.  $C^{*}(\ca)$ is defined as the completion of $\AW_{0,0}\cong \Fus$ with respect to this universal norm.  It is shown in \cite{PV} that this is finite and a $C^{*}$-norm.
\end{enumerate}
\end{defi}

We will show that admissible states are exactly the same as weight $0$ annular states.  First, a lemma due to Popa and Vaes:

\begin{lemm}(\cite{PV}, Lemma 3.7)  Let $\mathcal{C}$ be a rigid $C^{*}$-tensor category and $\Theta$ a multiplier on $\mathcal{C}$.  Then the following are equivalent:
\begin{enumerate}
\item
For all $\alpha, \beta\in \mathcal{C}$, the map $\Theta_{\alpha, \beta}: End(\alpha\otimes \beta)\rightarrow End(\alpha\otimes \beta)$ is completely positive.
\item
For all $\alpha, \beta\in \mathcal{C}$, the map $\Theta_{\alpha,\beta}: End(\alpha\otimes \beta)\rightarrow End(\alpha\otimes\beta)$ is positive.
\item
For all $\alpha\in \mathcal{C}$ we have that $\Theta_{\alpha, \overline{\alpha}}(\overline{R}_{\alpha}\overline{R}^{*}_{\alpha})$ is positive.
\end{enumerate}
\end{lemm}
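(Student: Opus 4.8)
Since this is \cite{PV}, Lemma 3.7, the simplest route is to cite it; the argument I would give runs as follows. The implications $(1)\Rightarrow(2)$ and $(2)\Rightarrow(3)$ are immediate: $\overline{R}_\alpha\overline{R}_\alpha^{*}$ has the form $yy^{*}$ and so is a positive element of $End(\alpha\otimes\overline\alpha)$, and a positive map sends positive elements to positive elements. All the content is in $(3)\Rightarrow(1)$, which I would split as $(3)\Rightarrow(2)$ followed by $(2)\Rightarrow(1)$.

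For $(3)\Rightarrow(2)$: fix $\alpha,\beta$ and a positive $T\in End(\alpha\otimes\beta)$. By semisimplicity and the spectral theorem one may write $T=\sum_i v_i v_i^{*}$ with $v_i\in Mor(Z_i,\alpha\otimes\beta)$ and each $Z_i$ simple, so it suffices to prove $\Theta_{\alpha,\beta}(vv^{*})\ge 0$ for $v\in Mor(Z,\alpha\otimes\beta)$, $Z$ simple. Frobenius reciprocity lets one factor $v=(v'\otimes 1_\beta)(1_Z\otimes R_\beta)$ with $R_\beta\in Mor(id,\overline\beta\otimes\beta)$ a standard solution and $v'\in Mor(Z\otimes\overline\beta,\alpha)$, so that $vv^{*}=(v'\otimes 1_\beta)(1_Z\otimes R_\beta R_\beta^{*})((v')^{*}\otimes 1_\beta)$. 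Now I would use naturality of the multiplier in its first leg (with the trailing $\beta$ left untouched) to pull $v'$ outside $\Theta$, and then axiom (2) of Definition 6.3 (with $\alpha_1=Z$, $\alpha_2=\overline\beta$, $\beta_1=\beta$, $\beta_2=id$) to obtain $\Theta_{\alpha,\beta}(vv^{*})=(v'\otimes 1_\beta)\big(1_Z\otimes\Theta_{\overline\beta,\beta}(R_\beta R_\beta^{*})\big)((v')^{*}\otimes 1_\beta)$. With a spherical choice of duals ($\overline{\overline\beta}=\beta$), $R_\beta$ is precisely a standard solution $\overline{R}_{\overline\beta}$ for the object $\overline\beta$, so $\Theta_{\overline\beta,\beta}(R_\beta R_\beta^{*})=\Theta_{\overline\beta,\overline{\overline\beta}}(\overline{R}_{\overline\beta}\overline{R}_{\overline\beta}^{*})\ge 0$ by hypothesis (3) applied to $\overline\beta$. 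Thus $\Theta_{\alpha,\beta}(vv^{*})$ has the form $wzw^{*}$ with $z\ge 0$, hence is positive, and summing over $i$ gives $\Theta_{\alpha,\beta}(T)\ge 0$.

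For $(2)\Rightarrow(1)$: it is enough to show each amplification $\Theta_{\alpha,\beta}\otimes\mathrm{id}_{M_n(\C)}$ is positive. Realize $M_n(\C)=End(\mathbf 1_n)$ with $\mathbf 1_n:=id^{\oplus n}$ (which exists since $\ca$ has direct sums), so $M_n(End(\alpha\otimes\beta))=End(\mathbf 1_n\otimes\alpha\otimes\beta)$. Axiom (2) (with $\alpha_1=\mathbf 1_n$, $\alpha_2=\alpha$, $\beta_1=\beta$, $\beta_2=id$) shows $\Theta_{\mathbf 1_n\otimes\alpha,\beta}$ agrees with $1_{\mathbf 1_n}\otimes\Theta_{\alpha,\beta}$ on $1_{\mathbf 1_n}\otimes End(\alpha\otimes\beta)$, while axiom (1) makes $\Theta_{\mathbf 1_n\otimes\alpha,\beta}$ bimodular over $M_n(\C)=End(\mathbf 1_n)\otimes 1_{\alpha\otimes\beta}$; a one-line computation with matrix units then forces $\Theta_{\mathbf 1_n\otimes\alpha,\beta}=\mathrm{id}_{M_n(\C)}\otimes\Theta_{\alpha,\beta}$ on all of $M_n(End(\alpha\otimes\beta))$. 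Since (2) holds for every object, $\Theta_{\mathbf 1_n\otimes\alpha,\beta}$ is positive, so each amplification of $\Theta_{\alpha,\beta}$ is positive, i.e.\ $\Theta_{\alpha,\beta}$ is completely positive.

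The one point that is not quite formal from the two bulleted axioms is the naturality of $\Theta$ in each leg used in the second paragraph. I would justify this by appealing to \cite{PV}, Proposition 3.6: every multiplier has the form $\Theta^{\phi}$ for a function $\phi:\Irr\to\C$, and the explicit ``thread $X_k$ through with weight $\phi(X_k)$'' formula for $\Theta^{\phi}$ is manifestly compatible with pre- and post-composition by morphisms on either leg (alternatively, one can derive it from (1) and (2) by decomposing objects into simples). Everything else is routine manipulation in the graphical calculus with the conjugate equations, so I expect keeping the cups and caps straight in the Frobenius step — and the naturality bookkeeping — to be the only real obstacle.
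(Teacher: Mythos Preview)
The paper gives no proof of this lemma; it is stated with attribution to \cite{PV}, Lemma 3.7, and used as a black box in the proof of Theorem 6.6. Your proposal does exactly the same --- cite \cite{PV} --- and then supplies a correct sketch of the underlying argument, so there is nothing to compare.

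Your sketch is sound. The only delicate point, which you flag yourself, is the naturality of $\Theta$ in each tensor leg needed to pull $v'$ outside in the $(3)\Rightarrow(2)$ step; this is indeed not a formal consequence of axioms (1) and (2) as stated, and your appeal to the explicit $\Theta^{\phi}$ description from \cite{PV}, Proposition 3.6, is the standard and correct way to justify it. The $(2)\Rightarrow(1)$ amplification argument via $\mathbf{1}_n=id^{\oplus n}$ is clean and works exactly as you describe once one combines axiom (2) (to get the action on diagonal blocks) with the $End(\mathbf{1}_n\otimes\alpha)$-bimodularity from axiom (1) (to move matrix units across).
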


Note that $\overline{R}_{\alpha}\overline{R}^{*}_{\alpha}$ is given in pictures by \grc{alphajones}.

\begin{theo} $\phi$ is a weight $0$ annular state if and only if $\phi$ is admissible in the sense of Definition $6.4$.
\end{theo}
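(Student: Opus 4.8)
The plan is to turn each side into one and the same positivity statement about morphisms of $\ca$, using the surjective $*$-homomorphism $\Psi:\widetilde{\AC}\to\AC$. First observe that both conditions already contain the normalization $\phi(p_{0})=1$: for a weight $0$ annular state this is part of Definition~4.3, while if $\phi/d$ is a cp-multiplier then $\phi=d(\ \cdot\ )(\phi/d)(\ \cdot\ )$ is a state on $\Fus\cong\AC_{0,0}$ by the Popa--Vaes fact recalled just before Definition~6.4, so again $\phi(p_{0})=1$. Hence only the positivity requirements need to be matched.

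The key computation I would carry out is the pull-back $\phi\circ\Psi$ on $\widetilde{\AC}_{0,0}=\bigoplus_{\alpha}Mor(\alpha,\alpha)$. Using $\Psi^{\alpha}(x)=\sum_{k}\sum_{V\in onb(X_{k},\alpha)}V^{*}xV$, the identity $Tr_{X_{k}}(V^{*}xV)=Tr_{\alpha}(xVV^{*})$, and the explicit pairing of $\phi$ with $\AC_{0,0}$, one gets $\phi\big(\Psi^{\alpha}(x)\big)=Tr_{\alpha}(x\,M^{\phi}_{\alpha})$, where $M^{\phi}_{\alpha}:=\sum_{k\,:\,X_{k}\prec\alpha}\frac{\phi(X_{k})}{d(X_{k})}P^{k}_{\alpha}$ and $P^{k}_{\alpha}\in End(\alpha)$ is the central projection onto the $X_{k}$-isotypic part. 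Since every $f\in\AC_{0,m}$ equals $\Psi^{\alpha}(g)$ for some object $\alpha$ and some $g\in Mor(\alpha,X_{m}\otimes\alpha)$, since the homogeneous component of $g^{\#}\cdot g\in\widetilde{\AC}_{0,0}$ is $(g^{\#}\otimes 1_{\alpha})(1_{\overline{\alpha}}\otimes g)\in End(\overline{\alpha}\otimes\alpha)$ (with $g^{\#}\in Mor(\overline{\alpha}\otimes X_{m},\overline{\alpha})$ the rotation defining $\#$ on $\widetilde{\AC}$), and since $\phi(\Psi(g)^{\#}\cdot\Psi(g))=\phi\big(\Psi(g^{\#}\cdot g)\big)$, the weight $0$ annular-state positivity $\phi(f^{\#}\cdot f)\ge 0$ becomes exactly: $Tr_{\overline{\alpha}\otimes\alpha}\big((g^{\#}\otimes 1_{\alpha})(1_{\overline{\alpha}}\otimes g)\,M^{\phi}_{\overline{\alpha}\otimes\alpha}\big)\ge 0$ for every object $\alpha$, every $m\in\Irr$ and every $g\in Mor(\alpha,X_{m}\otimes\alpha)$.

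On the admissible side, by Lemma~6.5 the function $\phi/d$ is a cp-multiplier iff $\Theta^{\phi/d}_{\gamma,\overline{\gamma}}(\overline{R}_{\gamma}\overline{R}_{\gamma}^{*})\ge 0$ in the multimatrix algebra $End(\gamma\otimes\overline{\gamma})$ for every object $\gamma$; testing against the simple summands, this says $W^{*}\Theta^{\phi/d}_{\gamma,\overline{\gamma}}(\overline{R}_{\gamma}\overline{R}_{\gamma}^{*})W\ge 0$ for every simple $\varepsilon$ and every $W\in Mor(\varepsilon,\gamma\otimes\overline{\gamma})$. The heart of the argument is the diagrammatic observation that unwinding the defining formula for $\Theta^{\phi/d}_{\alpha,\beta}$ shows it to act on $End(\alpha\otimes\beta)$ by bending the $\alpha$-leg around into an $\alpha\otimes\overline{\alpha}$ pair and inserting there the operator $\sum_{k}\frac{\phi(X_{k})}{d(X_{k})}P^{k}_{\alpha\overline{\alpha}}=M^{\phi}_{\alpha\overline{\alpha}}$ --- the factor $1/d(X_{k})$ being precisely what forces the normalization $\psi=\phi/d$. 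Substituting $x=\overline{R}_{\gamma}\overline{R}_{\gamma}^{*}$, conjugating by $W$, and using Frobenius reciprocity to convert $W$ (together with $\overline{R}_{\gamma}$) into a morphism $g\in Mor(\gamma,X_{m}\otimes\gamma)$ with $X_{m}\cong\overline{\varepsilon}$, one finds that $W^{*}\Theta^{\phi/d}_{\gamma,\overline{\gamma}}(\overline{R}_{\gamma}\overline{R}_{\gamma}^{*})W$ equals, up to a strictly positive scalar, $Tr_{\overline{\gamma}\otimes\gamma}\big((g^{\#}\otimes 1_{\gamma})(1_{\overline{\gamma}}\otimes g)\,M^{\phi}_{\overline{\gamma}\otimes\gamma}\big)$; conversely every triple $(\alpha,m,g)$ arises this way as $\gamma,\varepsilon,W$ vary. (Lemma~6.2 and the relabelling $\alpha\leftrightarrow\overline{\alpha}$ reconcile the ordering and duality conventions.)

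Granting this identity both implications are immediate: if $\phi$ is a weight $0$ annular state then the trace inequality of the second paragraph holds for all $(\alpha,m,g)$, hence $\Theta^{\phi/d}_{\gamma,\overline{\gamma}}(\overline{R}_{\gamma}\overline{R}_{\gamma}^{*})\ge 0$ for all $\gamma$, so $\phi/d$ is a cp-multiplier by Lemma~6.5 and $\phi$ is admissible; and if $\phi$ is admissible then $\Theta^{\phi/d}$ is positive, so the trace inequality holds for all $(\alpha,m,g)$, and $\phi$ is a weight $0$ annular state by the reformulation above. The main obstacle is establishing the diagrammatic identity of the third paragraph cleanly: one must track the various rotations carefully (using the chosen spherical structure so that the $\#$-operation of $\widetilde{\AC}$, the relations cutting out $Ker(\Psi)$, and the solutions of the conjugate equations built into $\Theta$ are mutually compatible), handle the Frobenius-reciprocity reshaping relating the test morphisms $g$ to the positive operators detected by Lemma~6.5, and keep account of the (always strictly positive, hence harmless) normalization constants $d(X_{k})$ and $d(\gamma)$. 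This is the diagrammatic counterpart of the equivalence $Rep(\AC)\simeq Z(\text{ind-}\ca)$ together with the Neshveyev--Yamashita picture of admissible representations, but the computation sketched here proves it directly.
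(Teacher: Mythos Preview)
Your proposal is correct and follows essentially the same route as the paper: both proofs reduce admissibility to Lemma~6.5(3), unwind the defining formula for $\Theta^{\phi/d}_{\gamma,\overline\gamma}(\overline R_\gamma\overline R_\gamma^{*})$ diagrammatically, and identify the resulting expression with $\phi$ (or $\phi^{op}$) applied to $\Psi(x)^{\#}\cdot\Psi(x)$ for a suitable $x$ built by Frobenius reciprocity from the test vector.

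The only packaging differences are minor. You introduce the central element $M^{\phi}_{\alpha}=\sum_{k}\frac{\phi(X_{k})}{d(X_{k})}P^{k}_{\alpha}$ explicitly and phrase both sides as a single trace condition, which is a clean conceptual device the paper leaves implicit. You also test positivity of $\Theta^{\phi/d}_{\gamma,\overline\gamma}(\overline R_\gamma\overline R_\gamma^{*})$ against morphisms $W\in Mor(\varepsilon,\gamma\overline\gamma)$ with $\varepsilon$ simple, so that the Frobenius dual $g$ lands directly in $\AC_{0,m}$; the paper instead tests against arbitrary $v\in End(\alpha\overline\alpha)$, producing an element $x\in\widetilde{\AW}^{\alpha}_{0,\alpha\overline\alpha}$ with non-simple target weight $\alpha\overline\alpha$, and therefore passes through the full-weight annular algebra $\AW$ with $\W=[Obj(\ca)]$ and invokes Lemma~6.1 to descend to the tube algebra. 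Your reduction to simple $\varepsilon$ sidesteps that detour. Both proofs still need Lemma~6.2 (or the relabelling $\alpha\leftrightarrow\overline\alpha$) to reconcile the $\phi$ versus $\phi^{op}$ discrepancy coming from the orientation conventions in $\Theta$, exactly as you note.
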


\begin{proof} \ First let $\phi:\Irr\rightarrow \C$ be an arbitrary function.   We define the multiplier $\Theta^{\psi}$ associated to the function $\psi(.):=\frac{\phi(.)}{d(.)}$ as above.  Take any vector $v\in End(\alpha \otimes \overline{\alpha})$.  Then we have $$\langle \Theta^{\psi}_{\alpha, \overline{\alpha}}(\overline{R}_{\alpha}\overline{R}^{*}_{\alpha})v,v\rangle =Tr(\Theta^{\psi}_{\alpha, \overline{\alpha}}(\overline{R}_{\alpha}\overline{R}^{*}_{\alpha}) v v^{*})$$
$$=\sum_{k\in \Irr} \frac{\phi(X_k)}{d(X_k)}\grd{alphajonesmultiplier1}$$
$$= \sum_{k\in \Irr} \sum_{W\in onb(\overline{\alpha}\alpha, X_k)} \frac{\phi(X_k)}{d(X_k)} \grd{vvstar1}$$
$$=\phi^{op}\circ\Psi^{\overline{\alpha}\alpha} \left(\grc{vvstar21}\right)$$

Here we use our graphical calculus conventions for side strings, and $\overline{W}$ represents the image of of the morphism $W$ under the appropriate duality functor.  In the last equality, we view $\phi^{op}$ as a functional on $\Fus$, as in the discussion preceeding Lemma 6.1.

Now, since weight $0$ annular states are the same for all full annular categories, without loss of generality we set $\W=[Obj(\ca)]$.    Let $x:= (v^{*}\otimes 1_{\alpha})\circ (1_{\alpha}\otimes R_{\alpha})\in \widetilde{\AW}^{\alpha}_{0, \alpha \overline{\alpha}}$.  Then the last term in the above equality can be interpreted as

$$\phi^{op}\circ \Psi^{\overline{\alpha}\alpha}(x^{\#}\cdot x)=\phi^{op}(\Psi^{\alpha}(x)^{\#}\cdot \Psi^{\alpha}(x)).$$

If $\phi\in \Phi\W_{0}$, then by Lemma $6.2$ the above expression is non-negative for all $v, \alpha$, hence $\Theta^{\psi}$ is a cp-multiplier by Lemma 6.5 (3).  

Conversely, if $\Theta^{\psi}$ is a cp-multiplier we need to show that $\phi$ is an annular state, and it suffices to show $\phi^{op}\in \Phi\W_{0}$.  But by Lemma $6.1$ it suffices to check this for the tube algebra.  Let $f=\sum_{k\in \Irr} f^{k}_{0,m}\in \AC_{0,m}$.  Set $\alpha:=\oplus X_{k}$ where $k$ appears in the description of $f$.  Then since $f^{k}_{0,m}\ne 0$,  $X_{m}\prec \alpha \overline{\alpha}$.  Then set 
$$v^{*}:=\sum_{k\in \Irr}\sum_{W\in onb(X_{m}, \alpha \overline{\alpha})} (1_{\alpha \overline{\alpha}}\otimes\overline{R}^{*}_{k})(W\otimes 1_{k}\otimes 1_{\overline{k}}) (f^{k}_{0,m}\otimes 1_{\overline{k}})\in End(\alpha \overline{\alpha})$$

Then since $\Theta^{\psi}$ is a cp-multiplier,  $$\phi^{op}(f^{\#}\cdot f)=\langle \Theta^{\psi}_{\alpha,\overline{\alpha}}(\overline{R}_{\alpha}\overline{R}^{*}_{\alpha})v, v\rangle\ge 0$$

Thus $\phi$ is an annular state.

\end{proof}

\begin{cor} $(\pi, H)$ be a $*$ representation of the fusion algebra $\Fus$.  Then the following are equivalent:

\begin{enumerate}
\item
$(\pi, H)$ is admissible in the sense of Definition 4.9, namely, there exists a non-degenerate $*$-representation of $\AC$ which restricted to $\AC_{0,0}$ is unitarily equivalent to $(\pi, H)$.
\item
$(\pi, H)$ is admissible in the sense of Popa and Vaes, Definition $6.4.$
\end{enumerate}
\end{cor}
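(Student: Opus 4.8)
The plan is to deduce this corollary by chaining Corollary 4.8, specialized to the weight $k=0$, with Theorem 6.6; no new analytic input is required, since the two hard ingredients (the universal bound that makes the GNS-type construction of Corollary 4.8 work, and the translation in Theorem 6.6 between annular positivity and complete positivity of multipliers) have already been established.

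First I would unwind Definition 4.9 in the case $k=0$. By that definition and Corollary 4.8, a non-degenerate $*$-representation of $\AC_{0,0}\cong\Fus$ is weight $0$ admissible exactly when it satisfies the equivalent conditions of Corollary 4.8 with $k=0$. Among those, item $(4)$ — existence of a non-degenerate $*$-representation $(\pi,H)$ of $\AC$ whose restriction to $\AC_{0,0}$ is the given representation — is verbatim statement $(1)$ of the corollary we are proving, while item $(1)$ says every vector state of the representation is a weight $0$ annular state on $\AC_{0,0}$. Hence statement $(1)$ of Corollary 6.7 is equivalent to the assertion that every vector state of $(\pi,H)$ lies in $\Phi_{0}$.

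Second, I would apply Theorem 6.6, which (under the isomorphism $\AC_{0,0}\cong\Fus$ of Proposition 3.1) identifies the weight $0$ annular states with exactly the admissible states in the sense of Definition 6.4(1). Consequently ``every vector state of $(\pi,H)$ is a weight $0$ annular state'' is equivalent to ``every vector state of $(\pi,H)$ is admissible'', and the latter is, word for word, the definition of $(\pi,H)$ being admissible in the sense of Popa and Vaes, Definition 6.4(2). Composing the two equivalences yields $(1)\Leftrightarrow(2)$.

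The only point demanding any care — bookkeeping rather than a genuine obstacle — is to verify that the identification $\AC_{0,0}\cong\Fus$ from Proposition 3.1 matches the two notions of ``vector state'' and preserves non-degeneracy; since that map is a unital $*$-isomorphism this is immediate. I do not expect any step here to be substantively hard: the corollary is really just the assembly of Corollary 4.8 and Theorem 6.6, and the note that this recovers (via the Neshveyev--Yamashita correspondence and Vaes' equivalence) the known result is a remark rather than part of the argument.
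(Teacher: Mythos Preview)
Your proposal is correct and matches the paper's intent: the corollary is stated without proof immediately after Theorem 6.6, and the argument you give—combining the equivalence $(1)\Leftrightarrow(4)$ of Corollary 4.8 at weight $0$ with the identification of weight $0$ annular states and Popa--Vaes admissible states from Theorem 6.6—is exactly the assembly the reader is meant to supply.
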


\begin{cor}
$C^{*}(\ca)\cong C^{*}(\AC_{0,0})$
\end{cor}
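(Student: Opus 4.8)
The plan is to observe that both $C^{*}(\ca)$ and $C^{*}(\AC_{0,0})$ are, by construction, $C^{*}$-completions of one and the same $*$-algebra, and that the two norms used to form them coincide; the isomorphism is then simply the identity on the dense subalgebra. By Proposition 3.1 we have $\AC_{0,0}\cong \Fus$ as $*$-algebras, so $C^{*}(\AC_{0,0})$ is the completion of $\Fus$ with respect to the norm $\sup_{\pi_{0}}\|\cdot\|_{\pi_{0}}$, where $\pi_{0}$ ranges over the weight $0$ admissible representations of $\AC_{0,0}$ in the sense of Definition 4.9 (equivalently, it is the completion inside $p_{0}C^{*}(\AC)p_{0}$, using the identification $C^{*}(\AC_{k,k})\cong p_{k}C^{*}(\AC)p_{k}$ recorded after Corollary 4.8). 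On the other hand, $C^{*}(\ca)$ is by Definition 6.4(3) the completion of $\Fus$ with respect to $\sup_{\pi}\|\cdot\|_{\pi}$, where $\pi$ ranges over the representations of $\Fus$ admissible in the sense of Popa and Vaes.

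The key step is to identify these two families of representations of $\Fus$, and this is exactly the content of Corollary 6.7: a non-degenerate $*$-representation of $\Fus$ is admissible in the sense of Definition 4.9 if and only if it is admissible in the sense of Popa and Vaes. Hence the two suprema defining the norms on $C^{*}(\ca)$ and on $C^{*}(\AC_{0,0})$ are taken over literally the same set of representations of $\Fus$, so the norms agree. Both are finite $C^{*}$-norms — the Popa--Vaes norm by their work, and the tube-algebra norm by Corollary 4.7 applied to the corner $\AC_{0,0}$ — so the completions exist and the identity on $\Fus$ extends to a $*$-isomorphism $C^{*}(\ca)\cong C^{*}(\AC_{0,0})$.

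Alternatively, one can argue at the level of GNS data: by Corollary 4.8, the restriction of $\|\cdot\|_{u}$ to $\AC_{0,0}$ is the supremum of $\|\pi_{\phi}(\cdot)\|$ over $\phi\in\Phi_{0}$, while Theorem 6.6 identifies $\Phi_{0}$ with the set of admissible states; since the Popa--Vaes universal norm on $\Fus$ is likewise the supremum of the GNS norms over admissible states, the two norms coincide, and $C^{*}(\ca)\cong C^{*}(\AC_{0,0})$ follows.

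Since Theorem 6.6 and Corollary 6.7 have already carried out all the substantive work, I do not expect a genuine obstacle here: the only point requiring care is bookkeeping — matching the correct norm on each side to the correct family of representations, and checking that nothing is lost when $\AC_{0,0}$ is identified with $\Fus$ via Proposition 3.1. In particular, no new estimate or computation should be needed.
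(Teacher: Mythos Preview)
Your proposal is correct and is exactly the intended argument: the paper states this corollary without proof, as an immediate consequence of Corollary 6.7 (equivalently Theorem 6.6), and your write-up simply spells out that the two universal norms on $\Fus\cong\AC_{0,0}$ are suprema over the same class of representations, hence coincide.
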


We consider the affine state $1_{\mathcal{C}}$ corresponding to the trivial representation, given by $1_{\mathcal{C}}(X)=d(X)$ for each $X\in Irr(\mathcal{C})$.  We note that if $\phi\in \Phi \W_{0}$, then $\phi$ is a state on $C^{*}(A\W_{0,0})$.  Furthermore, for each simple object $X\in Irr(\ca)$, $||X||_{u}=d(X)$, since we have $||X||\le d(X)$ by Corollary 4.7, and this value is realized in the trivial representation.  Furthermore, $C^{*}(A\W_{0,0})$ contains the one dimensional subspace $\hat{X}\cong \C[X]$ for $X\in Irr(\ca)$.  Hence for an annular state $\phi\in \Phi \W_{0}$, when viewed as a state on $C^{*}(A\W_{0,0})$, $||\phi|_{\hat{X}}||=|\frac{\phi(X)}{d(X)}|$.  Hence the numbers $|\frac{\phi(X)}{d(X)}|$ are ``local norms" of the state $\phi$.   Now, we recall the definitions of approximation and rigidity properties given by Popa and Vaes, but present them translated into our annular language.

\begin{defi} \cite{PV} A rigid $C^{*}$ tensor category (with $\Irr$ countable) is said

\begin{enumerate}
\item
to be \textit{amenable} if there exists a sequence of finitely supported weight $0$ annular states $\phi_{n}$ that converges to $1_{\mathcal{C}}$ pointwise on $Irr(\mathcal{C})$.
\item
to have \textit{property (T)} if for every sequence of annular states $\phi_{n}$ which converges pointwise to $1_{\mathcal{C}}$, the sequence of functions $\frac{\phi_{n}(\ .\ )}{d(\ .\ )}$ converges uniformly to $1$ on $Irr(\ca)$.
\item
to have the \textit{Haagerup property} if there exists a sequence of annular states $\phi_{n}$ each of which vanish at $\infty$ (for every $\epsilon$, there exists a finite subset $K\subseteq Irr(\ca)$ such that $|\frac{\phi(X)}{d(X)}|<\epsilon$ for all $X\in K^{c}$), which converge to $1_{\mathcal{C}}$ pointwise.

\end{enumerate}

\end{defi}

The statements above can thus be interpreted as statements about the convergence of states in the ``local norms" of annular states.

\ \ It is shown in \cite{PV} that these definitions are equivalent to the usual ones given in terms of symmetric enveloping algebras in the case where $\mathcal{C}$ is the even part of some subfactor standard invariant.  Popa and Vaes also give several very interesting examples of categories with each of these approximation properties.  We recall the following results of Popa and Vaes, which can be seen in our setting:

\begin{prop} (Popa-Vaes) \cite{PV}
\begin{enumerate}
\item
For a discrete group $G$ and $\ca\cong G-Vec$, $\ca$ has an analytical property if and only if $G$ has the corresponding property as a discrete group.
\item
$TLJ(2)$ is amenable.
\item
$TLJ(\delta)$ has the Haagerup property for $\delta\ge 2$.
\end{enumerate}
\end{prop}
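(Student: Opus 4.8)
The plan is to verify the three assertions by translating the definitions in Definition 6.10 into the language of Section 5 via Theorem 6.6 and Corollaries 6.7--6.8, after which parts (1) and (3) become short arguments and part (2) requires a genuine F\o lner-type construction inside the left regular representation. For (1), take $\ca\cong G\text{-}Vec$ with $G$ countable, so that $d(\cdot)\equiv 1$ and $1_{\ca}$ is the constant function $1$ on $\Irr\cong G$. By the computation of Section 5.1, $\AC_{0,0}$ is $*$-isomorphic to $\C[G]$; moreover, for $m\neq 0$ one has $\AC_{0,m}=\bigoplus_{Y}Mor(Y,mY)=0$ because the identity of $G$ is conjugate only to itself. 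Hence the positivity condition defining $\Phi_{0}$ involves only $f\in\AC_{0,0}$, and a functional $\phi$ on $\C[G]$ lies in $\Phi_{0}$ exactly when $\phi(e)=1$ and $\phi(f^{\#}f)\geq 0$ for all $f\in\C[G]$, i.e.\ exactly when $\phi$ is a normalized positive-definite function on $G$. Under this identification, and using $d\equiv 1$ and $1_{\ca}=$ trivial character, the three conditions of Definition 6.10 become verbatim the classical descriptions, for the discrete group $G$, of amenability (finitely supported normalized positive-definite functions converging pointwise to $1$), the Haagerup property (such functions can be chosen in $c_{0}(G)$), and property (T) (every sequence of normalized positive-definite functions converging pointwise to $1$ converges uniformly, the standard positive-definite-function characterization of (T)). This gives (1).

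For (2), specialize to $\delta=2$, so $d(k)=[k+1]_{1}=k+1$. Let $p_{k}$ be the polynomial expressing $[X_{k}]\in\C[\Irr]$ in terms of $[X_{1}]$ through the fusion rules ($p_{0}=1$, $p_{1}(t)=t$, $p_{k+1}=tp_{k}-p_{k-1}$), so that $1_{\ca}([X_{k}])=p_{k}(2)=k+1$. Work inside the left regular representation $L^{2}(\AC,\omega)$: since $\omega|_{\AC_{0,0}}$ is a weight $0$ annular state, $\pi_{\omega}$ is admissible and every vector state at weight $0$ lies in $\Phi_{0}$. Using that $\omega([X_{l}])=\delta_{l,0}$, the classes $\{[X_{k}]\}$ are orthonormal in $L^{2}(\AC,\omega)$; put $\xi_{n}:=(n+1)^{-1/2}\sum_{j=0}^{n}[X_{j}]\in\AC_{0,0}\subseteq\pi_{\omega}(p_{0})L^{2}(\AC,\omega)$, a sequence of unit vectors. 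The fusion rules give $\pi_{\omega}([X_{1}])\xi_{n}-2\xi_{n}=(n+1)^{-1/2}\big([X_{n+1}]-[X_{n}]-[X_{0}]\big)$, so $\|\pi_{\omega}([X_{1}])\xi_{n}-2\xi_{n}\|\to 0$. Since $[X_{k}]=p_{k}([X_{1}])$ with $p_{k}$ a fixed polynomial, it follows that $\phi_{n}([X_{k}]):=\langle\pi_{\omega}([X_{k}])\xi_{n},\xi_{n}\rangle\to p_{k}(2)=k+1=1_{\ca}(k)$ for each fixed $k$; and $\phi_{n}$ is finitely supported, since all components of $\pi_{\omega}([X_{k}])\xi_{n}$ have index $\geq k-n>n$ once $k>2n$, so $\phi_{n}([X_{k}])=0$ there. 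By Definition 6.10(1), $TLJ(2)$ is amenable.

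For (3), fix $\delta\geq 2$. By Corollary 6.8 and Section 5.2, $C^{*}(\ca)\cong C^{*}(\AC_{0,0})\cong C(X_{0})=C([-\delta,\delta])$, so weight $0$ annular states of $\AC$ correspond to probability measures $\mu$ on $[-\delta,\delta]$ via $\phi_{\mu}([X_{k}])=\int p_{k}\,d\mu$, with $p_{k}(\delta)=[k+1]_{q}=d(k)$ and $1_{\ca}$ corresponding to the Dirac mass $\delta_{\{\delta\}}$. Take $\phi_{n}\in\Phi_{0}$ to be the weight $0$ annular state of the one-dimensional admissible representation with parameter $t_{n}\in(-\delta,\delta)$ (which exists by Proposition 5.4(1)), chosen with $t_{n}\uparrow\delta$; then $\phi_{n}([X_{k}])=p_{k}(t_{n})$. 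For each fixed $k$, $p_{k}(t_{n})\to p_{k}(\delta)=d(k)=1_{\ca}(k)$ by continuity, so $\phi_{n}\to 1_{\ca}$ pointwise. For each fixed $n$, $|\phi_{n}([X_{k}])|/d(k)=|p_{k}(t_{n})|/[k+1]_{q}\to 0$ as $k\to\infty$: when $|t_{n}|\leq 2$, writing $t_{n}=2\cos\theta_{n}$ gives $|p_{k}(t_{n})|\leq|\sin\theta_{n}|^{-1}$ bounded in $k$ while $[k+1]_{q}\to\infty$; when $2<|t_{n}|<\delta$, writing $t_{n}=\pm(r_{n}+r_{n}^{-1})$ with $1<r_{n}<q$ gives $|p_{k}(t_{n})|=[k+1]_{r_{n}}$, which grows like $r_{n}^{k}$ against $[k+1]_{q}\sim q^{k}$ and $r_{n}<q$. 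Hence each $\phi_{n}$ vanishes at infinity, and $TLJ(\delta)$ has the Haagerup property by Definition 6.10(3).

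The main obstacle is part (2): (1) is a direct translation of classical group theory and (3) follows immediately from the character description of $C^{*}(\AC_{0,0})$, whereas amenability of $TLJ(2)$ requires exhibiting an explicit approximately invariant sequence in $L^{2}(\AC,\omega)$ and verifying both the finite support of the associated states and their pointwise convergence to $1_{\ca}$. It is precisely here that the non-genericity $\delta=2$ enters, through $p_{k}(2)=[k+1]_{1}=k+1$: the same construction for $\delta>2$ produces states converging pointwise to $k\mapsto k+1\neq[k+1]_{q}$, consistent with $TLJ(\delta)$ failing to be amenable for $\delta>2$.
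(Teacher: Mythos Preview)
Your proposal is correct. The paper itself does not supply a proof of this proposition: it attributes (1) and (2) to Popa and refers to \cite{PV} and \cite{DeC} for (3), adding only the remark that (3) ``could be deduced from the analysis of our examples above, by choosing weight $0$ characters $t\in(-\delta,\delta)$ such that $t\rightarrow\delta$'' and that ``these annular states are $c_0$ is straightforward.'' Your argument for (3) is exactly this suggested route, with the $c_0$ verification spelled out via the Chebyshev recursion for $p_k$. Your argument for (1) is the natural direct translation once one observes $\AC_{0,m}=0$ for $m\neq 0$ in $G\text{-}Vec$, which the paper's Section~5.1 makes available but does not carry through to this statement.

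The one place where you go beyond the paper's remarks is (2): the paper gives no argument at all here, and its hint about characters $t\to\delta$ would only yield $c_0$ states, not finitely supported ones. Your F\o lner-type construction in $L^{2}(\AC,\omega)$ with $\xi_n=(n+1)^{-1/2}\sum_{j=0}^{n}[X_j]$ is a clean self-contained proof of amenability, and your computation $[X_1]\cdot\xi_n-2\xi_n=(n+1)^{-1/2}([X_{n+1}]-[X_n]-[X_0])$ is correct. One small point worth making explicit: the passage from $\|\pi_\omega([X_1])\xi_n-2\xi_n\|\to 0$ to $\phi_n([X_k])\to p_k(2)$ uses that each $p_k$ is a fixed polynomial and that $\pi_\omega([X_1])$ is a bounded operator, so $\pi_\omega(p_k([X_1]))\xi_n-p_k(2)\xi_n\to 0$; you say this, but a reader might appreciate a word on why boundedness of $\pi_\omega$ on $\AC_{0,0}$ is already in hand (it is, by Corollary~4.7 or simply because $\omega|_{\AC_{0,0}}\in\Phi_0$).
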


 The first two items in the above proposition are due to Popa and have been known for some time.   We direct the reader to the paper of Popa and Vaes \cite{PV} for more details on the third item.  They use results about the corresponding quantum groups $SU_{q}(2)$ obtained in \cite{DeC}.  We note that while Popa and Vaes consider only the even part of $TLJ(\delta)$, their proofs work more generally.  One could deduce these results from the analysis of our examples above, by choosing weight $0$ characters $t\in (-\delta, \delta)$ such that $t\rightarrow \delta$.  That these annular states are $c_{0}$ is straightforward.   We have not emphasized this because the arguments are exactly the same as in \cite{PV}.  We remark that in \cite{Bro}, Brothier and Jones give a proof that $TLJ(\delta)$ has the Haagerup property using annular representations to directly construct bimodules over a symmetric enveloping inclusion of a subfactor whose standard invariant is $TLJ(\delta)$.  We also remark that using the results of Arano \cite{Ar}, Popa and Vaes show that the categories $Rep(SU_{q}(N))$ for $N\ge 3$ odd have property (T).

\end{subsection}
\begin{subsection}{Higher Weights and Approximation Properties}

If we try to define approximation and rigidity properties for the higher weights (i.e. $k\in \W$ with $k\ne 0$) in the vein of Popa and Vaes, we do not have the notion of a weight $k$ trivial representation, since the higher weight spaces in the trivial representation are $\{0\}$.  However, we do have the notions of annular states and universal norms.  In \cite{BG}, Brown and Guentner have characterized the three analytical properties described in the previous section in terms of $C^{*}$-algebra completions.  We will recast their work in the setting of annular algebras, and briefly show that definitions given here for weight $0$ agree with the definitions of Popa and Vaes from the previous section.  This will allow us to define approximation and rigidity properties for higher weight spaces in these terms.  We would like to thank Ben Hayes for pointing us in this direction.

\ \ First we define a ``point-wise product" of annular states and show that they are again affine states. Our definition requires some knowledge of the tensor functor on $Rep(\AW)$ found in \cite{DGG}, Section 4, however the final result will be easy to understand. Let $\phi\in \Phi \W_{k}$ and $\psi\in \Phi\W_{0}$.  We define the tensor product of the two states as follows. Let $\xi_{\phi}$ be a vector in a Hilbert representation $H_{\phi}$ realizing $\phi$, i.e. $\phi(x)=\langle \pi(x)\xi_{\phi}, \xi_{\phi}\rangle$ and similarly for $\psi$.  Then we define $\phi\boxtimes \psi\in \Phi \W_{k}$ as the vector state in the tensor product representation corresponding to the vector $ \xi_{\phi}\otimes 1_{k}\otimes\xi_{\psi}$ in the weight $k$ space of $H_{\phi}\boxtimes H_{\psi}$ (see \cite{DGG}, Section 4).  This vector state is an annular state by the definition of the inner product on this Hilbert space and Proposition $4.4$ of \cite{DGG}, Section 4. 

\begin{lemm} Let $\displaystyle x=\sum_{m\in \Irr} x_{m}\in \AW_{k,k}$, where each $x_{m}\in \AW^{m}_{k,k}$.  Then $ \displaystyle \phi\boxtimes\psi(x)=\sum_{m\in \Irr}\frac{\psi(X_m)}{d(X_m)}\phi(x_{m})$.
\end{lemm}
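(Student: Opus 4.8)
The plan is to reduce to homogeneous elements and then carry out the computation diagrammatically inside the tensor product representation of \cite{DGG}, Section~4. Since both sides of the asserted identity are linear in $x$ and only finitely many $x_m$ are nonzero, it suffices to show $\phi\boxtimes\psi(x_m)=\frac{\psi(X_m)}{d(X_m)}\phi(x_m)$ for a single homogeneous element $x_m\in\AW^m_{k,k}=Mor(X_m\otimes Y_k,\ Y_k\otimes X_m)$.

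First I would recall the explicit description of the weight $k$ subspace of $H_\phi\boxtimes H_\psi$ and of the vector $\xi_\phi\otimes 1_k\otimes\xi_\psi$ from \cite{DGG}, Section~4, where $\boxtimes$ is built using the commutativity constraints recalled at the end of Section~3. Because $\psi$ is a weight $0$ state, $\xi_\psi$ lies in the weight $0$ space, so $1_k\in Mor(X_k, X_k\otimes X_0)$ is meaningful and $\xi_\phi\otimes 1_k\otimes\xi_\psi$ genuinely sits in the weight $k$ space. The essential input is the description of how a homogeneous annular element acts in this model: drawing $x_m$ as a tube with a side string labelled $X_m$ and transporting its annular action to the tensor product via the half-braiding, one realizes the action of $x_m$ on $\xi_\phi\otimes 1_k\otimes\xi_\psi$ as a diagram in which the $X_m$ side string is routed past the middle leg, so that it wraps around both the $\phi$-leg and the $\psi$-leg. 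Using the graphical calculus for $\AW$ (allowing side strings labelled by arbitrary objects, as in the $\tAW$ picture) this diagram can be written down explicitly.

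Next I would pair the resulting vector against $\xi_\phi\otimes 1_k\otimes\xi_\psi$ and separate the two legs. Since $\xi_\psi$ is in the weight $0$ space, the only surviving contribution forces the portion of the $X_m$ string on the $\psi$-side to close up into a homologically nontrivial loop around $\xi_\psi$; by the ``annular relation'' describing $Ker(\Psi)$ this loop pulls off, and by the normalization identified in the discussion preceding Lemma~6.1 (where $\AW^m_{0,0}$ is a scalar multiple of the single $X_m$-string) it contributes precisely the factor $\frac{\psi(X_m)}{d(X_m)}$. With this scalar extracted, the residual diagram on the $\phi$-leg, paired against $\xi_\phi$, is exactly the annular action of $x_m$ read against $\xi_\phi$, hence equals $\phi(x_m)$; summing over $m$ by linearity then gives the lemma. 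The step I expect to be the main obstacle is precisely this diagrammatic bookkeeping: one must check that routing $X_m$ through the half-braiding of \cite{DGG} and then collapsing the $\psi$-side loop yields $\frac{\psi(X_m)}{d(X_m)}$ with the correct power of $d(X_m)$, and that no residual rotation or twist is introduced on the $\phi$-leg, so that the remaining factor is genuinely $\phi(x_m)$ and not $\phi$ precomposed with a rotation. Once the annular tangle for the action of $x_m$ on $\xi_\phi\otimes 1_k\otimes\xi_\psi$ is drawn correctly and Proposition~4.4 of \cite{DGG} together with the $Ker(\Psi)$ relation are applied, the identity should fall out.
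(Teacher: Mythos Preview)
Your proposal is correct and follows essentially the same route as the paper. The paper's proof works in the dual picture, using the commutativity constraints $(c^{\phi}_{n})$ and $(c^{\psi}_{n})$ associated to $\phi$ and $\psi$ (via Proposition~3.6) rather than the vectors $\xi_{\phi},\xi_{\psi}$, but the diagram one writes down is the same. The step you flag as the main obstacle --- verifying that the $\psi$-side collapses to the scalar $\frac{\psi(X_m)}{d(X_m)}$ with no residual twist --- is handled in the paper by inserting the resolution of the identity $\sum_{l\in\Irr} P^{l}_{m\overline{m}}$ into the $m\overline{m}$ strand on the $\psi$-side; reading the resulting diagram sideways shows each term is a morphism $id\to id$ factoring through $X_{l}$, hence vanishes unless $l=0$, and $P^{0}_{m\overline{m}}$ is the Jones projection $\frac{1}{d(X_m)}\overline{R}_{m}\overline{R}_{m}^{*}$, which produces exactly $\frac{\psi(X_m)}{d(X_m)}$ and leaves the $\phi$-diagram untouched. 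That is precisely the bookkeeping device that makes your ``only surviving contribution'' claim rigorous.
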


\begin{proof} To compute the value of this state, let $(c^{\phi}_{n})_{n\in \Irr}$ and $(c^{\psi}_{n})_{n\in \Irr}$ be the commutativity constraints corresponding to $\phi$ and $\psi$ respectively.  Then we have from the definition in \cite{DGG} before Lemma $4.2$, $$\phi\boxtimes \psi(x_m)=\grc{phitimespsi1}$$ 
$$=\sum_{l\in \Irr}\grc{phitimespsiz1},$$

where $P^{l}_{m\overline{m}}\in End(X_m\overline{X_m})$ is the central projection of $X_m \overline{X}_m$ onto the summand of sub-objects isomorphic to $X_l$.  Reading the diagram sideways as in our graphical calculus convention, we see that we have a hom from identity to identity factoring through the  $P^{l}_{m\overline{m}}$, but since each $X_l$ is simple, all these terms are $0$ except when $l=0$.  But $P^{0}_{m\overline{m}}$ is the $X_m$ Jones projection, and thus the above is equal to

$$\frac{1}{d(X_m)}\grc{phitimespsifinal1}=\frac{\psi(X_m)}{d(X_m)}\phi(x_m)$$

Extending by linearity, we see that for $x=\sum_{W} x_{w}\in A\W_{k,k}$, where each $x_{w}\in A\W^{W}_{k,k}$, 

$$\phi\boxtimes\psi(x)=\sum_{m\in \Irr}\frac{\psi(X_m)}{d(X_m)}\phi(x_{m}).$$

\end{proof}

Although the fact that this is an annular state in general requires \cite{DGG}, the reader unfamiliar with this work can simply use the above lemma for the rest of this section.  We note that in the weight $0$ case this pointwise product can be understood as resulting from the composition of cp-multipliers as in the proof of Proposition 5.3 in \cite{PV}.  We proceed to define universal norms with respect to certain ideals of $\ell^{\infty}(\Irr)$, following \cite{BG}. 

Consider the annular centralizer algebra of weight $k\in \W$, $\displaystyle \AW_{k,k}\cong\bigoplus_{m\in \Irr} \AW^{m}_{k,k}$.  We equip each finite dimensional vector space $\AW^{m}_{k,k}$ with the restriction of the universal norm $||\ .\ ||_{u}$.

\begin{defi}  Let $D\triangleleft \ell^{\infty}(\Irr)$ be an algebraic ideal.  If $\phi\in (C^{*}(\AW_{k,k}))^{\#}$ (the space of continuous linear functionals on $C^{*}(\AW_{k,k})$ ), then we say $\phi$ is $D$-class if the  function $\hat{\phi}:\Irr\rightarrow \mathbb{R}_{+}$ defined by $\hat{\phi}(X_m):=\| \phi|_{\AW^{m}_{k,k}} \| $ is in $D$.
\end{defi}

We notice that $\hat{\phi}\in \ell^{\infty}(\Irr)$ since $\| \phi|_{\AW^{m}_{k,k}} \| \le \| \phi \|$, so the definition above makes sense.

\begin{defi}  An admissible representation $(\pi, V)$ of $\AW_{k,k}$ is $D$-class if there is a dense subspace $\{\eta_{\alpha}\}\subseteq V$ such that all the matrix coefficients $\langle \pi(.)\eta_{\alpha},\eta_{\beta}\rangle$ are $D$-class. 
\end{defi}

Since we only require dense subspaces, this class is closed under arbitrary direct sums.

\begin{defi} We define a $C^{*}$ semi-norm on $\AW_{k,k}$ by $||f||_{D}:=sup\{ ||f||_{V}\ :\ V\ \text{is}\ D\text{-class}\}$.  
\end{defi}

We remark that this semi-norm is finite since it is bounded by the universal norm, and a $C^{*}$ semi-norm since arbitrary direct sums of D-class representations are $D$-class, hence we may take such a direct sum over all $D$-class representations to realize this semi-norm.  We call this representation $\pi_{D}$.  Let $Ker_{D}\triangleleft \AW_{k,k}$ denote the kernel of this representation.  Then we define the $C^{*}$-algebra  $C^{*}_{D}(\AW_{k,k}):=\overline{\AW_{k,k}/Ker_{D}}^{||\ .\ ||_{D}}$.  We have a natural homomorphism $\gamma_{D}:C^{*}(\AW_{k,k})\rightarrow C^{*}_{D}(\AW_{k,k})$, and we notice that if $D=\ell^{\infty}(\Irr)$, then the natural homomorphism is an isomorphism so that $C^{*}(\AW_{k,k})\cong C^{*}_{\ell^{\infty}}(\AW_{k,k})$.

 \begin{defi}\ We say an ideal $D\subseteq \ell^{\infty}(\Irr)$ is \textit{$k$-translation invariant} if for all $\phi\in (\AW_{k,k})^{\#}$ such that $\phi$ is $D$-class, then for any $x, y\in \AW_{k,k}$, $\phi(x\cdot (\ .\ ) \cdot y)$ is $D$-class.
\end{defi}

 For $k=0$, this is equivalent to $D$ being invariant under left and right actions of the fusion algebras.  We present the following lemma, following \cite{BG}, to underline the role of translation invariance for the rest of the section:

\begin{lemm} If $D$ is $k$-translation invariant , $\phi \in \Phi \W_{k}$, and $\phi$ is $D$-class, then the GNS representation of $C^{*}(\AW_{k,k})$ with respect to $\phi$ is $D$-class.
\end{lemm}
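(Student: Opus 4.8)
The plan is to take the cyclic subspace of the GNS construction as the dense subspace required by the definition of a $D$-class representation, and to recognize the matrix coefficients against that subspace as ``translates'' $\phi(x\cdot(\ .\ )\cdot y)$ of $\phi$, to which $k$-translation invariance then applies directly.

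First I would record that a weight $k$ annular state $\phi$ extends to a state on the unital $C^{*}$-algebra $C^{*}(\AW_{k,k})$: it is positive with $\phi(p_{k}) = 1$, and since it is realized as a vector state of a (non-degenerate) representation of $\AW$ by the GNS-type construction of Section $4$ (Corollary $4.5$, transported to general full $\W$ via Theorem $4.2$), the boundedness estimate of Lemma $4.4$ yields $|\phi(x)| \le \|x\|_{u}$ for all $x \in \AW_{k,k}$, so $\phi$ is continuous for the universal norm on $\AW_{k,k}$. Let $(\pi_{\phi}, H_{\phi}, \xi_{\phi})$ denote the GNS triple of this state, so that $\phi(x) = \langle \pi_{\phi}(x)\xi_{\phi}, \xi_{\phi}\rangle$ and $\pi_{\phi}(\AW_{k,k})\xi_{\phi}$ is dense in $H_{\phi}$. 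I would take $\{\eta_{\alpha}\} := \{ \pi_{\phi}(a)\xi_{\phi} : a \in \AW_{k,k} \}$ as the candidate dense subspace.

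Next I would compute the matrix coefficients. For $a, b \in \AW_{k,k}$, using that $\pi_{\phi}$ is a $*$-representation (so $\pi_{\phi}(b)^{*} = \pi_{\phi}(b^{\#})$, $\#$ being the involution on $\AW_{k,k}$) and that $p_{k}\AW p_{k}$ is closed under the annular product, one obtains for every $x \in C^{*}(\AW_{k,k})$
$$\langle \pi_{\phi}(x)\,\pi_{\phi}(a)\xi_{\phi},\ \pi_{\phi}(b)\xi_{\phi}\rangle \;=\; \langle \pi_{\phi}(b^{\#}\cdot x\cdot a)\,\xi_{\phi},\ \xi_{\phi}\rangle \;=\; \phi(b^{\#}\cdot x\cdot a).$$
Thus the matrix coefficient $\langle \pi_{\phi}(\ .\ )\,\eta_{\alpha}, \eta_{\beta}\rangle$ with $\eta_{\alpha} = \pi_{\phi}(a)\xi_{\phi}$, $\eta_{\beta} = \pi_{\phi}(b)\xi_{\phi}$ is exactly the functional $\phi\big(b^{\#}\cdot(\ .\ )\cdot a\big)$, which lies in $(\AW_{k,k})^{\#}$ since it is $\phi$ precomposed with left multiplication by $b^{\#}$ and right multiplication by $a$ (both bounded on $C^{*}(\AW_{k,k})$); it is precisely the functional ``$\phi\big(x\cdot(\ .\ )\cdot y\big)$'' from the definition of $k$-translation invariance, with $x := b^{\#}$ and $y := a$.

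Finally, since $\phi$ is $D$-class and $D$ is $k$-translation invariant, $\phi\big(b^{\#}\cdot(\ .\ )\cdot a\big)$ is $D$-class for all $a, b \in \AW_{k,k}$; hence every matrix coefficient of $\pi_{\phi}$ against the dense subspace $\{\eta_{\alpha}\}$ is $D$-class, which is exactly the assertion that the GNS representation $(\pi_{\phi}, H_{\phi})$ of $C^{*}(\AW_{k,k})$ is a $D$-class representation. I do not anticipate any real obstacle: the argument is a direct unwinding of the definitions following the analogous step in Brown--Guentner \cite{BG}, and the only points needing care are the bookkeeping ones above --- that $\phi$ really descends to a state on $C^{*}(\AW_{k,k})$, and that $\phi\big(b^{\#}\cdot(\ .\ )\cdot a\big)$ is norm-continuous so that $k$-translation invariance legitimately applies.
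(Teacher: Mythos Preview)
Your proof is correct and follows essentially the same route as the paper's: take the cyclic vectors $\pi_{\phi}(a)\xi_{\phi}$ as the dense subspace, compute the matrix coefficients as $\phi(b^{\#}\cdot(\ .\ )\cdot a)$, and invoke $k$-translation invariance. The paper's version is more terse and omits the bookkeeping you added about $\phi$ extending to a state on $C^{*}(\AW_{k,k})$, but the argument is the same.
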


\begin{proof} Let $\Omega_{\phi}$ be a cyclic vector for $\phi$.  Then for $f,g,h\in \AW_{k,k}$ we have $\langle \pi_{\phi}(f)g\Omega_{\phi}, h \Omega_{\phi} \rangle=\phi(h^{\#}\cdot f\cdot g)$.  By translation invariance, this implies $\langle\ \pi_{\phi}(\ .\ )g \Omega_{\phi}, h \Omega_{\phi} \rangle$ is $D$-class.  Since $\{g \Omega_{\phi}\ :g\in \AW_{k,k}\}$ is dense in the $GNS$ representation, this representation $D$-class.
\end{proof} 

\begin{cor} If $D$ is $k$-translation invariant, $f\in \AW_{k,k}$, then $\displaystyle \| f \|^{2}_{D}=\sup_{\phi\in \Phi \W_{k}\bigcap D\text{-class}} \phi(f^{\#}\cdot f)$.
\end{cor}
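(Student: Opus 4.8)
The plan is to prove the two inequalities separately, in both cases reducing to the positive element $f^{\#}\cdot f$ via the $C^{*}$-identity and then invoking the two preceding results: the description (Corollary 4.8 together with the definition of $D$-class) of $D$-class representations as admissible representations carrying a dense supply of $D$-class matrix coefficients, and the lemma just proved that, when $D$ is $k$-translation invariant, the GNS representation attached to a $D$-class element of $\Phi\W_{k}$ is itself $D$-class. Throughout I would use that $\|g\|_{D}=\sup\{\|\pi_{V}(g)\|:(\pi_{V},V)\ D\text{-class}\}$ and that $\|\cdot\|_{D}$ is a $C^{*}$-seminorm, so $\|g\|_{D}^{2}=\|g^{\#}\cdot g\|_{D}$ for $g\in\AW_{k,k}$.

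\textbf{The inequality $\|f\|_{D}^{2}\le\sup_{\phi}\phi(f^{\#}\cdot f)$.} Fix a $D$-class representation $(\pi_{V},V)$, with a dense subspace spanned by vectors $\{\eta_{\alpha}\}$ whose matrix coefficients $\langle\pi_{V}(\cdot)\eta_{\alpha},\eta_{\beta}\rangle$ are $D$-class. I would first record that the $D$-class functionals on $\AW_{k,k}$ form a linear subspace: an algebraic ideal $D\triangleleft\ell^{\infty}(\Irr)$ is automatically hereditary (if $h\in D$ and $|g|\le|h|$ pointwise then $g=(g/h)h\in D$ with $g/h\in\ell^{\infty}$), so $\widehat{\psi_{1}+\psi_{2}}\le\widehat{\psi_{1}}+\widehat{\psi_{2}}$ forces $\psi_{1}+\psi_{2}$ to be $D$-class whenever $\psi_{1},\psi_{2}$ are, and likewise for scalar multiples. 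Consequently, for every unit vector $\zeta$ in the (dense) linear span of the $\eta_{\alpha}$, the vector state $\phi_{\zeta}:=\langle\pi_{V}(\cdot)\zeta,\zeta\rangle$ is a finite linear combination of $D$-class matrix coefficients, hence $D$-class; and since $V$ is admissible, Corollary 4.8 gives $\phi_{\zeta}\in\Phi\W_{k}$. As $\pi_{V}(f^{\#}\cdot f)$ is positive and such $\zeta$ are dense in the unit sphere of $V$, we get $\|\pi_{V}(f^{\#}\cdot f)\|=\sup_{\zeta}\langle\pi_{V}(f^{\#}\cdot f)\zeta,\zeta\rangle=\sup_{\zeta}\phi_{\zeta}(f^{\#}\cdot f)\le\sup_{\phi\in\Phi\W_{k}\cap D\text{-class}}\phi(f^{\#}\cdot f)$, and taking the supremum over all $D$-class $V$ yields the inequality. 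This half uses neither the lemma nor $k$-translation invariance.

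\textbf{The inequality $\sup_{\phi}\phi(f^{\#}\cdot f)\le\|f\|_{D}^{2}$.} Fix $\phi\in\Phi\W_{k}$ that is $D$-class. Being a weight $k$ annular state, $\phi$ is dominated by the universal norm (Corollary 4.8), hence extends to a state on $C^{*}(\AW_{k,k})$; let $(\pi_{\phi},H_{\phi},\Omega_{\phi})$ be the associated GNS triple. This is precisely the point where $k$-translation invariance enters: the preceding lemma gives that $\pi_{\phi}$ is a $D$-class representation. Hence $\phi(f^{\#}\cdot f)=\|\pi_{\phi}(f)\Omega_{\phi}\|^{2}\le\|\pi_{\phi}(f)\|^{2}=\|\pi_{\phi}(f^{\#}\cdot f)\|\le\|f^{\#}\cdot f\|_{D}=\|f\|_{D}^{2}$, and taking the supremum over all such $\phi$ completes the argument.

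The one point I would take care to spell out is the heredity of the ideal $D$ invoked in the first step: it is what guarantees that the vector states supplied by a $D$-class representation are themselves $D$-class functionals, so that they genuinely lie in $\Phi\W_{k}\cap D\text{-class}$ and can be fed into the supremum. Apart from this small observation, the proof is a routine combination of the $C^{*}$-identity with the two results cited above, and I do not anticipate any serious obstacle.
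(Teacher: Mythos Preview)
Your argument is correct and is exactly the intended proof: the paper states this as an immediate corollary of the preceding lemma and gives no separate proof, and your two-inequality argument via the $C^{*}$-identity together with Lemma~6.16 is precisely what is being implicitly invoked. Your explicit justification that algebraic ideals of $\ell^{\infty}(\Irr)$ are hereditary, and hence that finite linear combinations of $D$-class matrix coefficients remain $D$-class, is a point the paper leaves tacit but which is genuinely needed for the first inequality.
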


\begin{lemm} If $\phi\in \Phi\W_{0}$, then $\phi$ is $D$-class if and only if  $|\frac{\phi(\ .\ )}{d(\ .\  )}|\in D$.
\end{lemm}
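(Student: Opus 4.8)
The plan is to unwind the two conditions by hand: for weight $k=0$ the function $\hat\phi$ appearing in the definition of a $D$-class functional equals $|\phi(\cdot)/d(\cdot)|$ on the nose, and the claimed equivalence is then immediate.

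First I would record, via Proposition 3.1, that each summand $\AW^m_{0,0}=Mor(X_m\otimes id,\ id\otimes X_m)$ is one-dimensional, spanned by the image $1_m$ of the fusion-algebra basis vector $[X_m]$. Under the identification of the algebraic dual of $\AW_{0,0}$ with complex functions on $\Irr$ (the discussion preceding Lemma 6.1), the functional coming from $\phi$ evaluates on this spanning vector to $\phi(X_m)$: indeed $1_m$ is the single $X_m$-strand, so closing it up in the defining formula contributes a factor $d(X_m)$ that cancels the $d(X_m)^{-1}$ out front. Next I would pin down the norm that $\AW^m_{0,0}$ carries as a subspace, with the restriction of the universal norm $\|\cdot\|_u$ used in Definition 6.12: since $\|X\|_u=d(X)$ for every simple $X$ --- this value being attained already in the trivial representation, as noted after Corollary 6.8 --- the unit ball of the one-dimensional space $\AW^m_{0,0}$ is exactly $\{\,z\,1_m:|z|\le d(X_m)^{-1}\,\}$.

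Taking the dual norm then yields $\hat\phi(X_m)=\|\phi|_{\AW^m_{0,0}}\|=\sup\{\,|z\,\phi(X_m)|:|z|\le d(X_m)^{-1}\,\}=|\phi(X_m)|/d(X_m)$, i.e.\ $\hat\phi=|\phi(\cdot)/d(\cdot)|$ as a function on $\Irr$. Since $\phi\in\Phi\W_0$ extends to a state on $C^*(\AW_{0,0})$, this function is bounded by $1$, hence lies in $\ell^\infty(\Irr)$, so asking for membership in the ideal $D$ is meaningful; and by Definition 6.12, $\phi$ is $D$-class exactly when $\hat\phi\in D$, that is, exactly when $|\phi(\cdot)/d(\cdot)|\in D$, which is the assertion.

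There is no real obstacle here; the only thing demanding care is the normalization bookkeeping --- checking that the spanning vector $1_m$ of $\AW^m_{0,0}$ has universal norm precisely $d(X_m)$, and that the functional attached to the function $\phi$ takes the value $\phi(X_m)$ (and not $\phi(X_m)/d(X_m)$ or $\phi(X_m)\,d(X_m)$) on it --- both of which are forced by the conventions fixed in Section 6 together with the identity $\|X\|_u=d(X)$.
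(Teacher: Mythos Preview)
Your proof is correct and follows essentially the same route as the paper's own argument: both reduce to the identity $\hat\phi(X_m)=|\phi(X_m)|/d(X_m)$, obtained from the one-dimensionality of $\AW^m_{0,0}$ together with $\|X\|_u=d(X)$. Your version is simply more explicit about the normalization bookkeeping (verifying that $\phi$ takes the value $\phi(X_m)$ on the spanning vector $1_m$ and computing the dual norm carefully), whereas the paper compresses this into a single line.
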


\begin{proof} We note that $\phi$ is $D$-class if and only if the function from $\Irr \rightarrow \C$ given by $\hat{\phi}(X)=||\phi|_{\AW^{X}_{0,0}}||$ is $D$-class.  But $\AW^{X}_{0,0}\cong \mathbb{C} X$, and for $\lambda\in \mathbb{C}$, $\phi(\lambda X)=\lambda\phi(X)$.  But $||X||_{u}=d(X)$, and thus $\hat{\phi}(X)=|\frac{\phi(X)}{d(X)}|$.

\end{proof}

\medskip

  The following lemma is a direct adaptation of \cite{BG}, Theorem 3.2.  We use an almost identical proof  with the exception that we are now using annular states and $\boxtimes$ defined above, instead of positive definite functions and pointwise product.

\begin{lemm} If $D\subseteq \ell^{\infty}(\Irr)$ is a $0$-translation invariant ideal, then the canonical homomorphism $\gamma_{D}:C^{*}(\AW_{0,0})\rightarrow C^{*}_{D}(\AW_{0,0})$ is an isomorphism if and only if there exists a sequence $\{\phi_{n}\}\subseteq \Phi \W_{0}\bigcap D\text{-class}$ such that  $\frac{\phi_{n}(\ .\ )}{d(\ . \ )}\rightarrow 1$  point-wise.
\end{lemm}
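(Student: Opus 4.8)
The plan is to follow the proof of Theorem 3.2 of Brown--Guentner \cite{BG}, transported into the annular setting: weight $0$ annular states play the role of positive definite functions, the product $\boxtimes$ from the construction preceding Lemma 6.11 plays the role of the Schur (pointwise) product, and the state $1_{\ca}$ plays the role of the trivial representation of a group. Two preliminary remarks will organize the argument. First, $\gamma_{D}$ is automatically surjective, since it is the identity on the dense subalgebra $\AW_{0,0}$; hence $\gamma_{D}$ is an isomorphism precisely when $\|f\|_{D}=\|f\|_{u}$ for every $f\in\AW_{0,0}$. Second, applying Corollary 6.17 to $D$ and (using $C^{*}_{\ell^{\infty}}(\AW_{0,0})=C^{*}(\AW_{0,0})$) to $D=\ell^{\infty}(\Irr)$ gives $\|f\|_{D}^{2}=\sup\{\phi(f^{\#}\cdot f):\phi\in\Phi\W_{0}\cap D\text{-class}\}$ and $\|f\|_{u}^{2}=\sup\{\phi(f^{\#}\cdot f):\phi\in\Phi\W_{0}\}$, so the statement reduces to deciding when these two suprema agree for all $f$.

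For the ``if'' direction, given $\{\phi_{n}\}\subseteq\Phi\W_{0}\cap D\text{-class}$ with $\phi_{n}(\,\cdot\,)/d(\,\cdot\,)\to 1$ pointwise, I would fix an arbitrary $\psi\in\Phi\W_{0}$, realize $\psi$ and $\phi_{n}$ by vectors in Hilbert representations (Corollary 4.5), and form $\psi\boxtimes\phi_{n}\in\Phi\W_{0}$. By Lemma 6.11, $(\psi\boxtimes\phi_{n})(x)=\sum_{m}\frac{\phi_{n}(X_{m})}{d(X_{m})}\,\psi(x_{m})$ for $x=\sum_{m}x_{m}\in\AW_{0,0}$; evaluating on the one-dimensional pieces $\AW^{m}_{0,0}\cong\C X_{m}$ and using Lemma 6.18 together with $\|X_{m}\|_{u}=d(X_{m})$ shows that $\widehat{\psi\boxtimes\phi_{n}}=\widehat{\phi_{n}}\cdot\widehat{\psi}$, the product of an element of $D$ with a bounded function, hence again in $D$; so $\psi\boxtimes\phi_{n}$ is $D$-class. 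Since the sum over $m$ is finite and $\phi_{n}(X_{m})/d(X_{m})\to 1$, we obtain $(\psi\boxtimes\phi_{n})(x)\to\psi(x)$ for all $x$, so in particular $\psi(f^{\#}\cdot f)=\lim_{n}(\psi\boxtimes\phi_{n})(f^{\#}\cdot f)\le\|f\|_{D}^{2}$. Taking the supremum over $\psi$ yields $\|f\|_{u}\le\|f\|_{D}$, hence equality, and $\gamma_{D}$ is an isomorphism.

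For the ``only if'' direction, assume $\gamma_{D}$ is an isomorphism, so that $1_{\ca}$ (a weight $0$ annular state) is a state of $C^{*}_{D}(\AW_{0,0})=C^{*}(\AW_{0,0})$. Since $C^{*}_{D}(\AW_{0,0})$ is by definition the completion of $\AW_{0,0}$ in the norm of the direct sum $\pi_{D}$ of all $D$-class representations, $\pi_{D}$ is a \emph{faithful} representation of $C^{*}_{D}(\AW_{0,0})$; a standard Hahn--Banach separation argument then shows that every state of a C*-algebra lies in the weak-$*$ closure of the convex hull of the vector states of any faithful representation, so $1_{\ca}$ is a weak-$*$ limit of finite convex combinations of vector states of $\pi_{D}$. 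As $\pi_{D}$ is $D$-class it carries a dense subspace whose matrix coefficients are $D$-class; perturbing the implementing vectors into that subspace, with the change on a prescribed finite set controlled by $\|\pi_{D}(f)\|\le\|f\|_{u}$, produces, for any finite $F\subseteq\AW_{0,0}$ and any $\varepsilon>0$, a state $\Psi$ that is (i) $D$-class, being a finite convex combination of $D$-class functionals, (ii) a weight $0$ annular state, because each $D$-class admissible representation of $\AW_{0,0}$ extends to a representation of $\AW$ whose weight $0$ part is the original (Corollary 4.8), so vector states of $\pi_{D}$ --- and their convex combinations --- lie in $\Phi\W_{0}$, and (iii) within $\varepsilon$ of $1_{\ca}$ on $F$. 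Exhausting the countable set $\Irr$ by finite subsets and letting $\varepsilon\to 0$ produces a single sequence $\{\phi_{n}\}\subseteq\Phi\W_{0}\cap D\text{-class}$ with $\phi_{n}\to 1_{\ca}$ pointwise on $\Irr$, which, since $1_{\ca}(X)=d(X)$, is exactly $\phi_{n}(\,\cdot\,)/d(\,\cdot\,)\to 1$ pointwise.

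I expect the ``only if'' direction to be the main obstacle: one has to manufacture an honest sequence of functionals that are simultaneously weight $0$ annular states and $D$-class out of the bare equality of the two C*-norms. The two delicate points there are recognizing that convex combinations of vector states of $\pi_{D}$ are again weight $0$ annular states --- which hinges on $D$-class admissible representations extending to the whole annular algebra --- and executing the perturbation that moves the implementing vectors into the $D$-dense subspace while keeping the values on a fixed finite set under control. The ``if'' direction is comparatively soft once Lemma 6.11 and the reformulation of the norms via Corollary 6.17 are available.
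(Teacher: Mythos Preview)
Your argument is correct and follows the same architecture as the paper: the ``if'' direction via $\psi\boxtimes\phi_{n}$ and Corollary 6.17 is identical to the paper's, and the ``only if'' direction has the same shape --- produce a faithful $D$-class representation, approximate $1_{\ca}$ by vector functionals, then perturb into the $D$-dense subspace.

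The one genuine difference is the tool you use to approximate $1_{\ca}$. The paper first replaces $\pi_{D}$ by an infinite amplification so that its image contains no compact operators, and then invokes Glimm's Lemma to approximate $1_{\ca}$ by \emph{single} vector states. You instead use the Hahn--Banach separation fact that the state space is the weak-$*$ closed convex hull of vector states of any faithful representation, which forces you to work with finite convex combinations and then observe that $\Phi\W_{0}$ is convex (true, by inspection of Definition 4.3). Your route is arguably more elementary --- it avoids Glimm's Lemma and the amplification step --- at the mild cost of tracking convex combinations through the perturbation and the $D$-class verification. The paper's route yields vector states directly, which is slightly cleaner but requires the extra lemma. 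Either path closes the argument.
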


\begin{proof}  First suppose the canonical map $C^{*}(\AW_{0,0})\rightarrow C^{*}_{D}(\AW_{0,0})$  is an isomorphism.  Then there exists a faithful $D$-class representation $\pi$ of $C^{*}(\AW_{0,0})$.  Taking infinite direct sums if necessary, we can assume that $\pi(C^{*}(\AW_{0,0}))$ contains no compact operators.  Then by Glimm's Lemma (see, for example \cite{BO}, Lemma 1.4.11 ), for any state $\phi$ of $C^{*}(\AW_{0,0})$ there exists a sequence of vector states $\omega_{\eta_{n}}\rightarrow \phi$.  By definition of $D$-class,  there is a dense subspace of vectors whose vector states are $D$-class. We can thus approximate the vector states with $D$-class vector states.  Setting $\phi=1_{\ca}$, the trivial representation vector state described in the previous section,  we have one direction of our lemma.
 
Now suppose that there exists a sequence of functions $\phi_{n}\in \Phi \W_{0}$ such that $|\frac{\phi_{n}(\ .\ )}{d(\ .\ )}|\in D$, and $\frac{\phi_{n}(\ .\ )}{d(\ .\ )}\rightarrow 1$ point-wise. By the above corollary, we simply need to show that the collection of $D$-class annular states is weak- $*$ dense in $\Phi \W_{0}$. Let $\psi\in \Phi \W_{0}$ be arbitrary.  Then $\psi \boxtimes \phi_{n} :\Irr\rightarrow \C$ is $D$-class since $|\frac{\psi \boxtimes \phi_{n}(\ . \ )}{d(\ .\ )}|=|\frac{\psi(\ .\ )}{d(\ .\ )}\frac{\phi_{n}(\ .\ )}{d(\ .\ )}|$.  Since $|\frac{\phi_{n}(\ . \ )}{d(\ .\ )}|\in D$ by Lemma 6.18, $|\frac{\psi(\ . \ )}{d(\ . \ )}\frac{\phi_{n}(\ .\ )}{d(\ . \ )}|\in D$ since $D$ is an ideal.  Now $\psi\boxtimes \phi_{n}(X)=\psi(X)\frac{\phi_{n}(X)}{d(X)}\rightarrow \psi(X)$ for all $X\in \Irr$ by hypothesis, and thus $\psi\boxtimes \phi_{n}\rightarrow \psi$ in the weak-$*$ topology on $\Phi \W_{0}$.  

\end{proof}

  We let $c_{c}\subseteq \ell^{\infty}(\Irr)$ be the algebraic ideal of finitely supported functions, and $c_{0}$ the ideal of functions vanishing at $\infty$. 

\begin{lemm} $c_{c}$ and $c_{0}$ are $k$-translation invariant ideals for all $k\in \W$.
\end{lemm}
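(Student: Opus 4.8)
The two families $c_c$ and $c_0$ are visibly algebraic ideals of $\ell^{\infty}(\Irr)$ (a function supported on a finite set, resp.\ vanishing at infinity, stays so after multiplication by a bounded function), so the content of the lemma is $k$-translation invariance, and the argument I give below is uniform in $k$. Recall that $\phi\in(\AW_{k,k})^{\#}$ is $D$-class precisely when $\hat\phi(X_m):=\|\phi|_{\AW^m_{k,k}}\|$ lies in $D$. Fix $x,y\in\AW_{k,k}$ and put $\psi:=\phi(x\cdot(\,\cdot\,)\cdot y)$. Since $\widehat{\psi_1+\psi_2}\le\hat\psi_1+\hat\psi_2$ pointwise and both $c_c$ and $c_0$ are stable under finite sums and under pointwise domination, writing $x=\sum_p x_p$ and $y=\sum_q y_q$ with $x_p\in\AW^p_{k,k}$, $y_q\in\AW^q_{k,k}$ reduces us to showing that each $\phi(x_p\cdot(\,\cdot\,)\cdot y_q)$ is $D$-class; so from now on assume $x=x_p$ and $y=y_q$ are homogeneous.

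Next comes the grading bookkeeping. From the multiplication formula, for $w\in\AW^m_{k,k}$ the product $x_p\cdot w\cdot y_q$ has nonzero component in $\AW^l_{k,k}$ only for the finitely many simple $l$ with $X_l\prec X_p\otimes X_m\otimes X_q$; call this set $S(m)$. Hence $\psi|_{\AW^m_{k,k}}$ factors as $\phi|_{W_m}\circ(x_p\cdot(\,\cdot\,)\cdot y_q)$ through $W_m:=\bigoplus_{l\in S(m)}\AW^l_{k,k}\subseteq C^{*}(\AW_{k,k})$, and therefore
$$\hat\psi(X_m)\ \le\ \|x_p\|_u\,\|y_q\|_u\,\bigl\|\phi|_{W_m}\bigr\|.$$
By Frobenius reciprocity $l\in S(m)$ is equivalent to $X_m\prec\overline{X}_p\otimes X_l\otimes\overline{X}_q$, so each fixed $l$ lies in $S(m)$ for only finitely many $m$, and for any finite $F\subseteq\Irr$ the set $\{m:S(m)\cap F\ne\varnothing\}=\bigcup_{l\in F}\{m:X_m\prec\overline{X}_p X_l\overline{X}_q\}$ is finite. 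This already disposes of $c_c$: if $\hat\phi$ is supported on a finite set $F$, then $\phi|_{W_m}=0$ whenever $S(m)\cap F=\varnothing$, so $\hat\psi$ is supported on the finite set $\{m:S(m)\cap F\ne\varnothing\}$.

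For $c_0$ the plan is to upgrade the displayed inequality to a quantitative estimate
$$\hat\psi(X_m)\ \le\ C\,\max_{l\in S(m)}\hat\phi(X_l)\qquad(C=C_{x_p,y_q}\text{ independent of }m),$$
i.e.\ to bound the norm of $\phi$ on the direct sum $W_m$ of graded pieces, up to a fixed constant, by the largest of its norms on the individual pieces $\AW^l_{k,k}$. Granting this, the proof closes just as for $c_c$: given $\varepsilon>0$, the set $F_\varepsilon:=\{l:\hat\phi(X_l)\ge\varepsilon/(C\|x_p\|_u\|y_q\|_u)\}$ is finite because $\hat\phi\in c_0$, and $\hat\psi(X_m)<\varepsilon$ whenever $S(m)\cap F_\varepsilon=\varnothing$, which fails for only finitely many $m$. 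The crux — and the step I expect to be the genuine obstacle — is therefore exactly this estimate on $\|\phi|_{W_m}\|$, which cannot be extracted from Banach-space generalities alone and must use the concrete structure of $C^{*}(\AW_{k,k})=p_k C^{*}(\AW)p_k$. I would attack it through the orthogonality of the graded subspaces $\AW^l_{k,k}$ for the inner product induced by the trace $\Omega$ (for simple $l\ne l'$ the product $\AW^{\overline l}_{k,k}\cdot\AW^{l'}_{k,k}$ has vanishing $\AW^0_{k,k}$-component, so those subspaces are mutually $\Omega$-orthogonal), together with the fact that $\omega|_{\AW_{k,k}}$ is a weight-$k$ annular state, hence an honest state on $C^{*}(\AW_{k,k})$, which controls the $L^{2}(\AW,\Omega)$-norm by the operator norm on $\AW_{k,k}$; converting these ingredients into the uniform bound on $\|\phi|_{W_m}\|$ is the delicate part of the argument.
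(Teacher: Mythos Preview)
Your $c_c$ argument is fine and matches the paper's. The gap is exactly where you say it is, in the $c_0$ case, and the $L^2(\Omega)$-orthogonality route you sketch will not close it: comparing $\|\cdot\|_u$ and $\|\cdot\|_{L^2(\Omega)}$ on $\AW^{l}_{k,k}$ costs a factor of order $d(X_l)$ (this is essentially Corollary~4.7), and since the relevant $l$'s range over $S(m)$ this constant blows up as $m$ varies. So the analytic estimate you are aiming for cannot be extracted from the trace alone.

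What you are missing is not analytic but combinatorial: a bound on $|S(m)|$ that is \emph{uniform} in $m$. The paper first reduces to one-sided multiplication by a homogeneous $f\in\AW^{j}_{k,k}$ (you may as well do the same; it halves the bookkeeping), and then proves that for fixed $j$,
\[
|\{s\in\Irr:\;X_s\prec X_jX_t\}|\ \le\ d(X_j)^2\qquad\text{for every }t.
\]
The argument is a one-line use of Frobenius reciprocity and the dimension function: if $s\prec jt$ then $\bar t\prec \bar s j$, so $d(t)\le d(s)d(j)$, hence $1\le d(j)d(s)/d(t)$; summing over $s\prec jt$ gives $\sum_{s\prec jt}1\le\sum_{s}N^{s}_{jt}\,d(j)d(s)/d(t)=d(j)^2$. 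With this uniform cardinality bound in hand the paper chooses a finite $K\subset\Irr$ with $\hat\phi(X_s)<\varepsilon/(d(X_j)^2\|f\|)$ for $s\notin K$, sets $K':=\{t:\exists\,s\in K,\ s\prec jt\}$ (finite, by the Frobenius argument you already gave), and for $t\notin K'$ estimates
\[
\|\phi(f\cdot\,)\big|_{\AW^{t}_{k,k}}\|\ \le\ \|f\|\sum_{s\prec jt}\hat\phi(X_s)\ \le\ d(X_j)^2\,\|f\|\cdot\max_{s\prec jt}\hat\phi(X_s)\ <\ \varepsilon.
\]
Taking $\varepsilon=0$ recovers $c_c$. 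So the ``crux'' you identified is resolved by an elementary counting argument, not by trace-orthogonality; once you know $|S(m)|\le d(X_p)^2d(X_q)^2$ your own outline goes through verbatim with $C=\|x_p\|_u\|y_q\|_u\,d(X_p)^2d(X_q)^2$.
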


\begin{proof} First we remark that to check translation invariance, it suffices to check that $\phi(f\ .\ )$ and $\phi(\ .\ f)$ are $D$-class for $\phi\in D$-class $\cap \Phi\W_{k}$ independently.  Furthermore by linearity it suffices to check for $f\in \AW^{j}_{k,k}$ for $j\in \Irr$.  

First we claim that for a fixed simple object $X$, $|\{ Y\in \Irr \ :\ N^{Y}_{XZ}\ne 0\}|\le d(X)^{2}$ for all $Z$.  To see this, note that if $Y\prec XZ$ by Frobenius reciprocity, $\overline{Z}\prec \overline{Y}X$, and thus $d(Z)=d(\overline{Z})\le d(\overline{Y})d(X)=d(X)d(Y)$, hence $1\le \frac{d(X)d(Y)}{d(Z)}$.  But we have  $\displaystyle | \{ Y\in Irr(\ca)\ :\ N^{Y}_{XZ}\ne 0\}| \le \sum_{Y\prec XZ} N^{Y}_{XZ}\le \sum_{Y\prec XZ} \frac{d(X)d(Y)}{d(Z)}N^{Y}_{XZ}=d(X)^{2}$.  

\ \ To show $c_{0}$ is $k$-translation invariant, we must show that if $\hat{\phi}\in c_{0}$ then the functional that maps $m\in \Irr$ to $\hat{\phi}(f\cdot (\ .\ ) )(m):=\| \phi(f\ .\ )|_{\AW^{m}_{k,k}} \| \in c_{0}$, where $f\in \AW^{j}_{k,k}$.  For $\epsilon>0$, there exists a finite subset $K\subset \Irr$ such that $\| \phi|_{\AW^{i}_{k,k}}\| <\frac{\epsilon}{d(X_j)^{2}\| f\| }$ for all $X_i\notin K$.  Now, for $Y\in K$, define $K^{\prime}_{Y}=\{X_s \in \Irr \ :\ Y\prec X_j X_s\}$.  This set is clearly finite by Frobenius reciprocity.  Thus $K^{\prime}:=\bigcup_{Y\in K} K^{\prime}_{Y}$ is finite.  For $X_t\notin K^{\prime}$, since $\displaystyle f\cdot AP^{t}_{k,k}\subseteq \bigoplus_{X_s\prec X_j X_t}\AW^{s}_{k,k}$ we have 
$$\| \phi(f\ .\ )|_{\AW^{t}_{k,k}}\| \le \| f\| \left(\sum_{X_s\prec X_j X_t}\| \phi(\ .\ )|_{\AW^{s}_{k,k}}\|\right) \le |\{X_s\prec X_j X_t\ :\ s\in \Irr \}| \frac{\epsilon}{d(X_j)^{2}}<\epsilon$$

The proof for right invariance works exactly the same.  Putting $\epsilon=0$ and carrying out the same argument gives the $c_{c}$ case.

\end{proof}

\medskip

We are now ready for the generalization of Popa and Vaes 's approximation properties to higher weights $k\in \W$

\begin{defi} Let $\AW$ be an annular algebra.  Let $k\in \W$.  Then $\AW$:

\begin{enumerate}
\item
is \textit{k-amenable} if $C^{*}(\AW_{k,k})=C^{*}_{c_{c}}(\AW_{k,k})$.
\item
has the \textit{k-Haagerup} property if $C^{*}(\AW_{k,k})=C^{*}_{c_{0}}(\AW_{k,k})$
\end{enumerate}

\end{defi}

\begin{cor} The definitions of the approximation properties (amenability and Haagerup) above for $k=0$ are equivalent to the definitions of Popa and Vaes.
\end{cor}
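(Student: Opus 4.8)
The plan is to reduce the two stated equivalences to Lemma 6.19, which is exactly the device that converts the isomorphism $\gamma_{D}\colon C^{*}(\AW_{0,0})\to C^{*}_{D}(\AW_{0,0})$ into an approximating sequence of $D$-class weight $0$ annular states, and then to make the $D$-class condition explicit via Lemma 6.18. Before applying these I would record that, for $\W$ full, the data on the two sides of the claimed equivalence genuinely agree and, in particular, the $k=0$ instances of Definition 6.21 do not depend on the choice of $\W$: by Proposition 3.1, $\AW_{0,0}\cong\Fus$ canonically; the restriction of the universal norm to $\AW^{m}_{0,0}\cong\C X_{m}$ is multiplication by $d(X_{m})$, so the $D$-class condition is intrinsic to $\ca$; and $\Phi\W_{0}=\Phi_{0}$ coincides with the admissible states of Popa and Vaes by Lemma 6.1 together with Theorem 6.6. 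Finally, since $c_{c}$ and $c_{0}$ are $0$-translation invariant ideals of $\ell^{\infty}(\Irr)$ by Lemma 6.20, Lemma 6.19 applies with $D=c_{c}$ and with $D=c_{0}$.

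For amenability I would argue as follows. By Definition 6.21, $\AW$ is $0$-amenable precisely when $\gamma_{c_{c}}$ is an isomorphism; by Lemma 6.19 this holds precisely when there is a sequence $\{\phi_{n}\}\subseteq\Phi\W_{0}$ of $c_{c}$-class annular states with $\phi_{n}(\,\cdot\,)/d(\,\cdot\,)\to 1$ pointwise on $\Irr$. By Lemma 6.18, a weight $0$ annular state $\phi_{n}$ is $c_{c}$-class iff $|\phi_{n}(\,\cdot\,)/d(\,\cdot\,)|\in c_{c}$, which, as $d$ is nowhere zero, is the same as $\phi_{n}$ having finite support; and $\phi_{n}(X)/d(X)\to 1$ for every $X\in\Irr$ is exactly $\phi_{n}\to 1_{\ca}$ pointwise, since $1_{\ca}(X)=d(X)$ (Lemma 4.11). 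Hence $0$-amenability of $\AW$ is exactly the existence of a sequence of finitely supported weight $0$ annular states converging pointwise to $1_{\ca}$, which is the Popa--Vaes definition of amenability, Definition 6.9(1).

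The Haagerup case is the same argument with $c_{c}$ replaced by $c_{0}$: $\AW$ has the $0$-Haagerup property iff $\gamma_{c_{0}}$ is an isomorphism, iff (Lemma 6.19) there is a sequence $\{\phi_{n}\}\subseteq\Phi\W_{0}$ of $c_{0}$-class annular states with $\phi_{n}(\,\cdot\,)/d(\,\cdot\,)\to 1$ pointwise, iff (Lemma 6.18) there is such a sequence with $|\phi_{n}(\,\cdot\,)/d(\,\cdot\,)|\in c_{0}$, i.e.\ with each $\phi_{n}$ vanishing at $\infty$ in the sense of Popa and Vaes; and again pointwise convergence of $\phi_{n}/d$ to $1$ is pointwise convergence of $\phi_{n}$ to $1_{\ca}$. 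This is precisely Definition 6.9(3).

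I do not anticipate a genuine obstacle here: all of the substance has already been packaged into Lemmas 6.1, 6.18, 6.19, 6.20 and Theorem 6.6. The only points requiring (light) care are the bookkeeping that matches the analytic ``decay at $\infty$'' and ``finite support'' phrasings of Definition 6.9 with the ideal-membership conditions $|\phi_{n}/d|\in c_{0}$, $|\phi_{n}/d|\in c_{c}$ — which is immediate once one uses that $d$ never vanishes, so controlling the support or decay of $\phi_{n}$ and of $\phi_{n}/d$ is the same thing — and the observation that the $k=0$ instances of Definition 6.21 are properties of $\ca$ rather than of a particular full annular algebra $\AW$, so that comparing them with the category-level Definition 6.9 is legitimate.
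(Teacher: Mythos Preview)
Your proposal is correct and follows essentially the same approach as the paper, which simply cites Lemma~6.18; you have unpacked this into the full chain (Lemmas~6.19 and~6.20 to pass from the $C^{*}$-algebra isomorphism to an approximating sequence of $D$-class annular states, then Lemma~6.18 to read off $D$-class as the explicit decay/support condition), together with some well-definedness bookkeeping. The substance and route are the same; you have just been more explicit about which lemmas are doing the work.
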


\begin{proof}
This follows easily from Lemma $6.18$.
\end{proof}

\begin{theo} Let $\AW$ be an annular algebra
\begin{enumerate}
\item
If $\ca$ is amenable, then it is $k$-amenable for all $k\in \W$.
\item
If $\ca$ has the Haagerup property, it has the $k$-Haagerup property for all $k\in \W$.
\end{enumerate}
\end{theo}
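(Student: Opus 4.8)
The plan is to prove both parts at once by showing that for every $k\in\W$ the canonical $*$-homomorphism $\gamma_{D}\colon C^{*}(\AW_{k,k})\to C^{*}_{D}(\AW_{k,k})$ is isometric, hence an isomorphism, where $D=c_{c}$ for part (1) and $D=c_{0}$ for part (2). The lemma that $c_{c}$ and $c_{0}$ are $k$-translation invariant ideals of $\ell^{\infty}(\Irr)$ for all $k$ lets us apply the supremum description of the $D$-seminorm: $\|f\|_{D}^{2}=\sup\{\phi(f^{\#}\cdot f):\phi\in\Phi\W_{k},\ \phi\text{ is }D\text{-class}\}$. Applying the same corollary with $D=\ell^{\infty}(\Irr)$ (trivially $k$-translation invariant, and with $C^{*}_{\ell^{\infty}}(\AW_{k,k})\cong C^{*}(\AW_{k,k})$) gives $\|f\|_{u}^{2}=\sup\{\phi(f^{\#}\cdot f):\phi\in\Phi\W_{k}\}$. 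Since $\|f\|_{D}\le\|f\|_{u}$ always, it therefore suffices to show that for each $\phi\in\Phi\W_{k}$ there is a sequence of $D$-class weight $k$ annular states converging to $\phi$ weak-$*$ on $\AW_{k,k}$.

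First I would feed in the hypothesis on $\ca$. Amenability (resp.\ the Haagerup property) provides a sequence $\{\phi_{n}\}$ of weight $0$ annular states — which lie in $\Phi\W_{0}$ by Lemma 6.1 — with $\phi_{n}(X)/d(X)\to 1$ for each $X\in\Irr$ and with $X\mapsto|\phi_{n}(X)/d(X)|$ finitely supported (resp.\ vanishing at infinity); by the characterization of $D$-class weight $0$ states, each $\phi_{n}$ is $D$-class. Given $\phi\in\Phi\W_{k}$, I would form the pointwise products $\phi\boxtimes\phi_{n}$, which again belong to $\Phi\W_{k}$. The computational heart is the pointwise-product formula: for $x=\sum_{m\in\Irr}x_{m}\in\AW_{k,k}$ with $x_{m}\in\AW^{m}_{k,k}$,
$$(\phi\boxtimes\phi_{n})(x)=\sum_{m\in\Irr}\frac{\phi_{n}(X_{m})}{d(X_{m})}\,\phi(x_{m}).$$
From this, the restriction of $\phi\boxtimes\phi_{n}$ to the finite-dimensional block $\AW^{m}_{k,k}$ is the scalar multiple $\frac{\phi_{n}(X_{m})}{d(X_{m})}\,\phi|_{\AW^{m}_{k,k}}$, so $\widehat{\phi\boxtimes\phi_{n}}(X_{m})\le\bigl|\frac{\phi_{n}(X_{m})}{d(X_{m})}\bigr|\,\|\phi\|$; as $|\phi_{n}/d|\in D$ and $D$ is an ideal of $\ell^{\infty}(\Irr)$, this function of $m$ lies in $D$, so $\phi\boxtimes\phi_{n}$ is $D$-class. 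On the other hand, since each fixed $x$ has only finitely many nonzero components and $\phi_{n}(X_{m})/d(X_{m})\to 1$, the formula shows $(\phi\boxtimes\phi_{n})(x)\to\sum_{m}\phi(x_{m})=\phi(x)$, i.e.\ $\phi\boxtimes\phi_{n}\to\phi$ weak-$*$.

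Combining these, for any $f\in\AW_{k,k}$ we obtain $\phi(f^{\#}\cdot f)=\lim_{n}(\phi\boxtimes\phi_{n})(f^{\#}\cdot f)\le\|f\|_{D}^{2}$; taking the supremum over $\phi\in\Phi\W_{k}$ yields $\|f\|_{u}\le\|f\|_{D}$, hence $\|f\|_{D}=\|f\|_{u}$. Thus $\gamma_{D}$ is an isometric isomorphism: for $D=c_{c}$ this is $C^{*}(\AW_{k,k})=C^{*}_{c_{c}}(\AW_{k,k})$, i.e.\ $k$-amenability, and for $D=c_{0}$ it is $C^{*}(\AW_{k,k})=C^{*}_{c_{0}}(\AW_{k,k})$, i.e.\ the $k$-Haagerup property.

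I do not anticipate a genuine obstacle: the argument is a line-for-line upgrade of the weight $0$ case to arbitrary weights, with all the substantive work outsourced to results already in place — the identification of $\phi\boxtimes\phi_{n}$ as an honest weight $k$ annular state obeying the component-wise formula above (resting on the analysis of $\boxtimes$ from \cite{DGG}), the $k$-translation invariance of $c_{c}$ and $c_{0}$ (whose proof uses the bound $|\{Y\in\Irr:N^{Y}_{XZ}\neq 0\}|\le d(X)^{2}$), and the supremum description of $\|\cdot\|_{D}$. The only point demanding a little care is the bookkeeping: matching the correct ideal to each property and checking the approximating states land in $\Phi\W_{k}$, not merely in $\Phi_{k}$.
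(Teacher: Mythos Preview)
Your proposal is correct and follows essentially the same route as the paper: take the weight $0$ sequence $\phi_{n}$ supplied by amenability or the Haagerup property, form $\phi\boxtimes\phi_{n}$ for an arbitrary $\phi\in\Phi\W_{k}$, use the pointwise-product formula to verify that $\phi\boxtimes\phi_{n}$ is $D$-class and converges weak-$*$ to $\phi$, and conclude via the supremum description of $\|\cdot\|_{D}$ (which requires the $k$-translation invariance of $c_{c}$ and $c_{0}$). The only cosmetic difference is that the paper phrases the conclusion as ``$D$-class annular states are weak-$*$ dense, hence $C^{*}_{D}(\AW_{k,k})=C^{*}(\AW_{k,k})$'' without spelling out the norm comparison, whereas you make the inequality $\|f\|_{u}\le\|f\|_{D}$ explicit; your version is if anything slightly more detailed.
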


\begin{proof} Let $D$ be either $c_{c}(\Irr)$ or $c_{0}(\Irr)$.  By the above corollary, if $C^{*}_{D}(\AW_{0,0})=C^{*}_{\AW_{0,0}}$, then there exists $\phi_{n}\in \Phi \W_{0}$ with $|\frac{\phi_{n}(\ .\ )}{d(\ .\ )}|\in D$  such that $\frac{\phi_{n}(X)}{d(X)}$ converges to $1$ for each $X\in \Irr$.  Now,  if $\psi\in \Phi\W_{k}$,  we see that the function defined on $\Irr$ by $h_{n}(X):=||\psi\boxtimes\phi_{n}|_{\AW^{X}_{k,k}}||=|\frac{\phi_{n}(x)}{d(X)}|\ ||\psi|_{\AW^{X}_{k,k}}||\in D$.  Then $\psi\boxtimes\phi_{n}$ is a $D$-class annular state, and for every $f\in \AW_{k,k}$ with $\displaystyle f=\sum_{m\in \Irr}f_{m}$ where $f_{m}\in \AW^{m}_{k,k}$, $\displaystyle\psi\boxtimes \phi_{n}(f)=\sum_{m\in \Irr}\psi(f_{m})\frac{\phi_{n}(X_m)}{d(X_m)}\rightarrow \psi(f)$.  Thus the set of $D$-class states is weak-$*$ dense in the set of all states of $C^{*}(\AW_{k,k})$, hence $C^{*}_{D}(\AW_{k,k})=C^{*}(\AW_{k,k})$.

\end{proof}

\medskip

\ \ Weight $0$ approximation properties imply the corresponding approximation properties for all higher weights, thus supporting the notion that weight $0$ is certainly the ``right" place to define these properties for the whole category.  Using the $G-Vec$ example, it is easy to find examples with higher approximation properties but not the corresponding weight $0$ property. Simply find a group which is not amenable, or does not have the Haagerup property, but some centralizer subgroup does.   For example, if we take $G=F_{2}:=<a,b>$, then $G$ is not amenable but the centralizer subgroup of the element $a$ is isomorphic to $\mathbb{Z}$, so is amenable.

\medskip

\begin{cor}
\begin{enumerate}
\item
If $\ca\cong G-Vec$ for $G$ a discrete group, then for $X\in G\cong \Irr$, $\ca$ has a $X$- property (amenability, Haagerup) if and only if $Z_{G}(X)$ has the corresponding property as a discrete group.
\item
For all weight sets $\W$ and all $k\in \W$, $TLJ(2)$ is $k$-amenable.
\item
For all weight sets $\W$ and all $k\in \W$, $TLJ(\delta)$ has the $k$-Haagerup property for $\delta\ge 2$.
\end{enumerate}
\end{cor}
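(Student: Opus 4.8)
The plan is to derive parts (2) and (3) immediately from results already in hand and to devote the real work to part (1). By Proposition 6.10, $TLJ(2)$ is amenable and $TLJ(\delta)$ has the Haagerup property for every $\delta\ge 2$; by Theorem 6.23, amenability of $\ca$ makes the annular algebra $\AW$ $k$-amenable for every weight set $\W$ and every $k\in\W$, and the Haagerup property of $\ca$ makes $\AW$ have the $k$-Haagerup property for all such $\W$ and $k$. Chaining these two statements gives (2) and (3) verbatim, so the substance is part (1).

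For part (1) I would use the explicit description of the tube algebra of $G$-Vec from Section 5.1: for $X\in\Irr\cong G$ there is a $*$-algebra isomorphism $\AC_{X,X}\cong\C[Z_{G}(X)]$ carrying the basis vector $f^{m}_{X}\in\AC^{m}_{X,X}$ (the identity of the object $X_{m}\otimes X=mX$) to the group element $m\in Z_{G}(X)$, and under it $C^{*}(\AC_{X,X})\cong C^{*}(Z_{G}(X))$ is the full group $C^{*}$-algebra. The argument rests on three observations. First, the $\Irr=G$-grading of $\AC_{X,X}$ corresponds to the natural $Z_{G}(X)$-grading of $\C[Z_{G}(X)]$, extended by $0$ on $G\setminus Z_{G}(X)$, because $\AC^{m}_{X,X}=Mor(mX,Xm)$ is one-dimensional for $m\in Z_{G}(X)$ and $0$ otherwise. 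Second, since $(f^{m}_{X})^{\#}\cdot f^{m}_{X}=f^{m}_{X}\cdot(f^{m}_{X})^{\#}=p_{X}$, each $f^{m}_{X}$ is a unitary of $C^{*}(\AC_{X,X})$, so $\|f^{m}_{X}\|_{u}=1$; hence for a functional $\phi$ on $C^{*}(\AC_{X,X})$ the local norm $\|\phi|_{\AC^{m}_{X,X}}\|$ equals $|\phi(f^{m}_{X})|$, and the function $\hat\phi$ of Definition 6.12 becomes, under the identification, the modulus of the corresponding function on $Z_{G}(X)$ extended by $0$ to $G$. Third, as $Z_{G}(X)$ is a subgroup of the discrete group $G$, a function supported on $Z_{G}(X)$ lies in $c_{c}(\Irr)=c_{c}(G)$ (respectively $c_{0}(G)$) if and only if its restriction lies in $c_{c}(Z_{G}(X))$ (respectively $c_{0}(Z_{G}(X))$).

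Putting these together, the $c_{c}$-class (respectively $c_{0}$-class) functionals, hence the $c_{c}$-class (respectively $c_{0}$-class) representations, of $\AC_{X,X}$ are precisely those coming from $Z_{G}(X)$, so $C^{*}_{c_{c}}(\AC_{X,X})$ and $C^{*}_{c_{0}}(\AC_{X,X})$ are identified with $C^{*}_{c_{c}}(Z_{G}(X))$ and $C^{*}_{c_{0}}(Z_{G}(X))$, compatibly with $C^{*}(\AC_{X,X})\cong C^{*}(Z_{G}(X))$. Moreover $\C[Z_{G}(X)]$ with this grading and norm is exactly the weight-$0$ corner of the tube algebra of $Z_{G}(X)$-Vec (its weight-$0$ annular states are the normalized positive-definite functions on $Z_{G}(X)$, since the spaces $Mor(h,mh)$ making up the weight-$(0,m)$ part vanish whenever $m$ is not the identity). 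Hence, by Definition 6.21, the $X$-amenability (respectively the $X$-Haagerup property) of $\ca$ is equivalent to the $0$-amenability (respectively $0$-Haagerup property) of $Z_{G}(X)$-Vec, which by Corollary 6.22 is the Popa--Vaes notion, and by Proposition 6.10~(1) this holds if and only if $Z_{G}(X)$ is amenable (respectively has the Haagerup property) as a discrete group. Alternatively one may invoke the Brown--Guentner theorem \cite{BG} directly at this last step.

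The step requiring genuine care rather than routine bookkeeping is the compatibility in the second and third observations: that the restriction of the tube-algebra universal norm to each graded piece $\AC^{m}_{X,X}$ is exactly the full group $C^{*}$-norm of $Z_{G}(X)$ on the corresponding group element, so that the two notions of ``$D$-class'' agree on the nose. This leans on the identification $C^{*}(\AC_{X,X})\cong C^{*}(Z_{G}(X))$ from Section 5.1 and the unitarity of the $f^{m}_{X}$; everything else is a direct application of Theorem 6.23, Proposition 6.10, Corollary 6.22 and Definition 6.21.
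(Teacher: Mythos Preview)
Your proposal is correct and follows the same approach as the paper: parts (2) and (3) are deduced from Proposition~6.10 and Theorem~6.23 exactly as the paper does, and part (1) rests on the isomorphism $C^{*}(\AC_{X,X})\cong C^{*}_{u}(Z_{G}(X))$ from Section~5.1. The paper's own proof of (1) is a single sentence invoking that isomorphism, whereas you have carefully unpacked why the $\Irr$-grading, the local norms, and hence the $D$-class completions transfer across it---so your argument is considerably more detailed than the paper's but not different in strategy.
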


\begin{proof}
The first item follows from the fact that $C^{*}(\AC_{X,X})\cong C^{*}_{u}(Z_{G}(X))$.  The second two follow from Proposition $6.10$ and Theorem $6.23$.

\end{proof}

We conclude this section with the remark that rigidity properties do not enjoy the same ``globalness'' as approximation properties.  There is a natural definition of weight $k$ property (T) generalizing the notion for weight $0$, but unlike approximation properties, there seems to be no correspondence with the weight $0$ notion and the higher weight notions.  In the group case, the higher weight centralizer algebras are always subgroups of $G$, so this could simply reflect the fact that $(T)$ does not pass to subgroups.  However, property (T) appears to behave in unexpected ways for categories.  For example, it is shown in \cite{PV} using the work of Arano that categories with abelian fusion rules can have property (T) (\cite{Ar}).  This obscures the hope that property (T) von Neumann algebras can be constructed from property (T) categories, and also suggests that property (T) will not be as interesting for tensor categories in general as it is for groups.  This leads us to speculate whether a notion of (T) which does lead to von Neumann algebras with (T) can be formulated using the higher weights of the tube algebra.

\end{subsection}

\end{section}

\end{document}